%% LyX 1.5.2 created this file.  For more info, see http://www.lyx.org/.
%% Do not edit unless you really know what you are doing.
\documentclass[oneside,final]{amsart}
\usepackage[T1]{fontenc}
\usepackage[latin9]{inputenc}
\usepackage{geometry}
\geometry{verbose,letterpaper,tmargin=1in,bmargin=1in,lmargin=1in,rmargin=1in}
\setcounter{tocdepth}{2}
\usepackage{soul}
\usepackage{xcolor}
\usepackage{amssymb}

\makeatletter

%%%%%%%%%%%%%%%%%%%%%%%%%%%%%% LyX specific LaTeX commands.
\providecolor{lyxadded}{rgb}{0,0,1}
\providecolor{lyxdeleted}{rgb}{1,0,0}

%%%%%%%%%%%%%%%%%%%%%%%%%%%%%% Textclass specific LaTeX commands.
  \theoremstyle{plain}
  \newtheorem*{thm*}{Theorem}
 \theoremstyle{definition}
 \newtheorem{defn}{Definition}
  \theoremstyle{remark}
  \newtheorem{rem}{Remark}
  \theoremstyle{plain}
  \newtheorem{prop}{Proposition}
  \theoremstyle{plain}
  \newtheorem{lem}{Lemma}
  \theoremstyle{plain}
  \newtheorem{thm}{Theorem}
  \theoremstyle{plain}
  \newtheorem{cor}{Corollary}
  \theoremstyle{remark}
  \newtheorem{claim}{Claim}

%%%%%%%%%%%%%%%%%%%%%%%%%%%%%% User specified LaTeX commands.
\usepackage{graphics}
\usepackage{epsfig}
\usepackage{pstricks}
\usepackage{pst-node}

\psset{linewidth=0.05}
\sloppy

\def\vpic#1#2{\mbox{$\begin{array}[c]{l} \epsfig{file=#1,width=#2}\end{array}$}}

\let\cal\mathcal

\makeatother

\begin{document}

\title{Random matrices, free probability, planar algebras and subfactors.}

\author{A. Guionnet*, V. F. R. Jones$^\dagger$, D. Shlyakhtenko$^\ddagger$.}
\thanks{*UMPA, ENS Lyon, 46 all\'ee d'Italie, 69364 Lyon Cedex 07, FRANCE, aguionne@umpa.ens-lyon.fr.  A part of this project was supported by the Miller
Institute for Basic
Research in Science, University of California Berkeley.\\
$\dagger$ Department of Mathematics, UC Berkeley, Berkeley, CA 94720,  vfr@math.berkeley.edu.  Research supported by NSF grant DMS-0401734\\
$\ddagger$ Department of Mathematics, UCLA, Los Angeles, CA 94720, shlyakht@math.ucla.edu.  Research supported by NSF grant DMS-0555680}
\begin{abstract}
Using a family of graded algebra structures on a planar algebra and
a family of traces coming from random matrix theory, we obtain a tower
of non-commutative probability spaces, naturally associated to a given
planar algebra. The associated von Neumann algebras are II$_{1}$
factors whose inclusions realize the given planar algebra as a system
of higher relative commutants. We thus give an alternative proof to
a result of Popa that every planar algebra can be realized by a subfactor.
\end{abstract}
\maketitle

\section{Introduction}

It has been apparent for quite some time that there exists a strong
connection between subfactors, large random matrices and free probability
theory. Perhaps the most clear instance of this connection is that
all three theories have an underlying \emph{planar} structure. For
example, the standard invariant of a subfactor (i.e., the system of
higher relative commutants) is in a natural way a planar algebra \cite{jones:planar}.
Traces of polynomials in random matrices naturally count certain planar
objects (\cite{thooft-planar,brezin-itzykson-parisi-zuber:planarDiagrams,guionet-eduoard:combRM,Mingo:2006lr,mingo-speicher:2ndorderIII,mingo-speicher:2ndorderII,zvonkin:matrixCombIntro}).
Finally, the combinatorics of free probability theory is intimately
tied with that of non-crossing (i.e., planar) partitions \cite{cumulants}.
Furthermore, techniques from some of these subjects proved useful
for applications to others. For example, there are many connections
between work in free probability theory and certain computations in
the paper \cite{bisch-jones:intermediateSubfact}. Random matrices
and free probability theory were used to construct subfactors \cite{radulescu:subfact,shlyakht-ueda:subfactors,popa:standardlattice,shlyakht-popa:universal}.
More recently, Mingo and Speicher and Guionnet and Maurel-Segala \cite{alice:enumeration,guionet-eduoard:combRM,Mingo:2006lr,mingo-speicher:2ndorderII,mingo-speicher:2ndorderIII}
have found combinatorial expressions, involving planar diagrams, for
the that the large-$N$ asymptotics of moments of polynomials in certain random matrices.

In this paper we exploit for the first time the \emph{graded algebra}
coming from a planar algebra $P$ to obtain a subfactor $N\subset M$
whose standard invariant is $P$. The essential ingredient is a \emph{trace}
on the graded algebra coming from free probability/random matrices,
whose use in this context was inspired by \cite{alice:enumeration,guionet-eduoard:combRM,guionnet-shlyakht:convexPotentials},
which promises to be a source of further developments in this direction.

We take the point of view that all of the three subjects mentioned
above are intimately related to the notion of a planar algebra. Specifically,
the underlying idea is that a planar algebra, endowed with its graded
multiplication $\wedge_{0}$ and trace $Tr_{0}$ is a natural replacement
for the ring of polynomials occurring in both free probability theory
\cite{DVV:book} and the theory of random matrices with a potential
\cite{alice:enumeration,guionet-eduoard:combRM,guionnet-shlyakht:convexPotentials}. 

To be more precise, a \emph{subfactor planar algebra} (SPA) $P$ will
be a graded vector space $P=(P_{n},n>0,P_{0}^{\pm})$ which is an
algebra over the planar operad of \cite{vfr:annularStru,jones:planar,jones:graphPlanarAlg}
and satisfies certain dimension and positivity conditions outlined
in $\S$\ref{sec:PlanarAlgebra}. Every extremal finite index subfactor
has an SPA as its \emph{standard invariant}.

Given an SPA $P$, we define the sequence $Gr_{k}P$, $k=0,1,2,\ldots$
of complex $*$-algebras with $Gr_{k}P=\oplus_{n\geq k}P_{n}$ ($P_{0,+}^{}\oplus\bigoplus_{n\geq1}P_{n}$
if $k=0$) and multiplications $\wedge_{k}:P_{n}\times P_{m}\to P_{n+m-k}$
given by tangles as in $\S$\ref{sec:PlanarAlgebra}. On each $Gr_{k}P$
we define a trace $Tr_{k}:Gr_{k}P\to\mathbb{C}$ using the sum of
all Temperley-Lieb tangles. The trace $Tr_{0}$ comes directly from
Wick's Theorem applied to large $N$ limit of a certain Gaussian matrix
model using Wishart matrices, defined in $\S$\ref{sec:randomMatrixModel}.
But once calculated, this trace can be defined entirely in terms of
planar algebras. 

A rather important special case is when the matrix models may be taken
as $p$ independent Hermitian matrices. Then the algebra $Gr_{0}P$
is the even degree subalgebra of $\mathbb{C}\langle\{X\}\rangle$,
the non-commutative polynomials in $p$ self-adjoint variables $\{X\}$
(with $\#\{X\}=p$).   The trace $Tr_{0}$ is then the one 
discovered by Voiculescu in the context of his free probability theory
\cite{DVV:book,DVV:free,DVV:random}.  
It can be realized as 
the vacuum expectation value on the full Fock space on a real $p$-dimensional
vector space with basis $\{X\}$, by the representation of $\mathbb{C}\langle\{X\}\rangle$
which sends $X$ to $\ell_{X}+\ell_{X}^{*}$, $\ell_{X}$ being the
left creation operator of $X$ (see \cite{DVV:free,DVV:book}). 
The higher multiplications $\wedge_{k}$
are then given (on monomials of even degree $\geq2k$) by\[
(X_{1}X_{2}\cdots X_{r})\wedge_{k}(Y_{1}Y_{2}\cdots Y_{s})=(\prod_{i=1}^{k}\delta_{X_{r-i+1},Y_{i}})X_{1}\cdots X_{r-k}Y_{k+1}\cdots Y_{s}.\]
Our main result is, with notation as above and an SPA $P$, of index
parameter $\delta$,

\begin{thm*}
(i) For each $k$, $tr_{k}$ is a faithful tracial state on $Gr_{k}P$
and the GNS completion of $Gr_{k}P$ is a II$_{1}$ factor $M_{k}$
as long as $\delta>1$;

(ii) There are unital inclusions $Gr_{k}P\subset Gr_{k+1}P$ which
extend to $M_{k}\subset M_{k+1}$and projections $\mathbf{e}_{k}\in Gr_{k+1}P$,
such that $(M_{k+1},\mathbf{e}_{k})$ is the tower of basic constructions
for the subfactor $M_{0}\subset M_{1}$;\\
(iii) The relative commutants $M_{0}'\cap M_{k}$ are canonically
identified with the vector spaces $P_{k}$ and this identification
is a homomorphism of planar $*$-algebras.
\end{thm*}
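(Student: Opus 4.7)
The plan is to prove (i)--(iii) in the natural order: properties of $Tr_k$, then the tower inclusions and Jones projections, then factoriality and the commutant identification.

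\textbf{Step 1 (traces and positivity).} The tracial identity $Tr_k(ab)=Tr_k(ba)$ is a purely diagrammatic fact: the sum over Temperley--Lieb tangles defining $Tr_k$ is invariant under the rotation that implements the cyclic exchange $ab\mapsto ba$ inside $Gr_kP$. Positivity of $Tr_0$ I would read off directly from the random matrix model of \S\ref{sec:randomMatrixModel}: for any $Q\in Gr_0P$, $Tr_0(Q^*Q)$ is obtained as the large-$N$ limit of expectations of traces of a positive operator built from Wishart matrices. For $k\geq 1$, I would either compress $Gr_kP$ into a corner of $Gr_0P$ via a planar projection and inherit positivity, or repeat the random matrix calculation with an auxiliary compression tangle; both routes boil down to expressing $Tr_k(Q^*Q)$ as a limit of a bona fide positive trace. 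Faithfulness requires a genuine analytic input and I would get it from the full-Fock-space / Voiculescu representation alluded to in the introduction, where the GNS vector is separating for the relevant algebra.

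\textbf{Step 2 (inclusions and Jones projections).} The unital $*$-embedding $\iota:Gr_kP\hookrightarrow Gr_{k+1}P$ is the tangle that adjoins a through-string on an appropriate boundary arc. Checking that $\iota(a\wedge_k b)=\iota(a)\wedge_{k+1}\iota(b)$, $\iota(a^*)=\iota(a)^*$, and $Tr_{k+1}\circ\iota=Tr_k$ is pure picture manipulation, using that the sum over all TL tangles weighting $Tr_k$ factors correctly through the extra string. The Jones projection $\mathbf{e}_k\in Gr_{k+1}P$ is the rescaled TL cap-cup joining the two new boundary points. The defining basic-construction identities, namely $\mathbf{e}_k x\mathbf{e}_k=E_{M_{k-1}}(x)\mathbf{e}_k$ for $x\in M_k$, generation of $M_{k+1}$ by $M_k$ and $\mathbf{e}_k$, and the Markov property $Tr_{k+1}(x\mathbf{e}_k)=\delta^{-2}Tr_k(x)$, should again reduce to identities among Temperley--Lieb tangles that hold once $Tr_k$ is defined as the sum over all TL pictures.

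\textbf{Step 3 (factoriality and relative commutants).} The inclusion $P_k\hookrightarrow M_0'\cap M_k$ comes from the obvious planar embedding: elements of $P_k$ commute with $Gr_0P\subset M_0$ because, diagrammatically, any tangle supported in the outer strands can be slid past a tangle whose through-strings sit in the middle. The hypothesis $\delta>1$ enters via the index formula $[M_{k+1}:M_k]=\delta^2$, which together with the standard argument that the Jones tower has trivial centre once $M_0$ does, reduces factoriality of all $M_k$ to factoriality of $M_0$; for the latter I would exhibit inside $M_0$ a free-group factor coming from the semicircular elements $X\mapsto \ell_X+\ell_X^*$ of the random matrix model. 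Equality $P_k=M_0'\cap M_k$ is then forced by comparing the Markov trace on the $k$th higher relative commutant of $M_0\subset M_1$ with $Tr_k|_{P_k}$: they agree, so the trace-preserving inclusion $P_k\hookrightarrow M_0'\cap M_k$ is $L^2$-dense, and both sides are finite-dimensional, hence equal. The planar $*$-algebra structure matches on generating tangles by construction.

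\textbf{Main obstacle.} The core technical difficulty is transferring positivity and especially faithfulness of $Tr_k$ from the a priori statement about the random matrix model (a limit of finite-dimensional positive traces) to the abstract graded algebras $Gr_kP$, uniformly in $k$ and in a way compatible with the inclusions $\iota$. Once this analytic foundation is in place, the tower structure and commutant identification are, in essence, a translation of well-understood planar identities into operator-algebraic statements, combined with the standard Jones basic-construction machinery.
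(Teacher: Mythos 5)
Your overall architecture (random matrix model for positivity, Fock space for faithfulness, TL-tangle identities for the Jones projections and Markov property, diagrammatic ``sliding'' for $P_k\subset M_0'\cap M_k$) is the same as the paper's, but the two places where the real work happens are glossed over and as written would not close.

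\textbf{Factoriality of $M_0$.} Exhibiting a free-group factor inside $M_0$ does not make $M_0$ a factor: a factor subalgebra says nothing about the center of the ambient algebra. Moreover, $M_0=W^*(Gr_0P,Tr_0)$ sits inside the much larger graph algebra $\mathcal{M}_0=W^*(Gr_0P^\Gamma,Tr_0)$, which is genuinely \emph{not} a factor (its center is $\ell^\infty(\Gamma_+)$). What the paper does instead is prove the much stronger statement that the relative commutant $W^*(\cup,\Cup)'\cap\mathcal{M}_0$ is precisely the center $A_+$ (Theorem \ref{thm:relCommutantTrivial}), and this requires the delicate Fock-space computations of Lemmas \ref{lem:relCommcup}, \ref{lem:factorWhenNotStar} and \ref{lem:whenStar}: one shows $W^*(\cup)$ is a singular MASA in each corner by decomposing the Fock space over $\mathcal{A}_v$ into the trivial plus a multiple of the coarse bimodule, and then kills the remaining commutant with $\cup$ using the element $\Cup$, together with a separate treatment of the degenerate star-graph case. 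Factoriality of $M_0$ then follows because $Z(M_0)\subset W^*(\cup,\Cup)'\cap\mathcal{M}_0=A_+$ and $W^*(Gr_0P)\cap A_+=\mathbb{C}$ by one-dimensionality of the zero-box space of $P$. This is the technical heart of part (i) and there is no shortcut through ``a free-group subfactor.''

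\textbf{The identification $M_0'\cap M_k=P_k$.} Your argument ``the trace-preserving inclusion $P_k\hookrightarrow M_0'\cap M_k$ is $L^2$-dense, both are finite-dimensional, hence equal'' is circular: you do not know $M_0'\cap M_k$ is finite-dimensional (that is a consequence of the conclusion), and agreement of traces on a subalgebra does not give $L^2$-density. The paper proves the reverse inclusion $M_0'\cap M_k\subset P_k$ by the same relative-commutant machinery: since $\cup_k,\Cup_k\in M_k$ one has $M_0'\cap M_k\subset W^*(\cup_k,\Cup_k)'\cap\mathcal{M}_k$, and Lemma \ref{lem:relCommcupCup} shows the latter is exactly $P_{k,+}^\Gamma$, intersecting to $P_{k,+}$. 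Without this computation the equality is unproved. Your ``main obstacle'' paragraph identifies faithfulness as the core difficulty, but in fact faithfulness falls out cleanly once the Fock-space model is in place; the genuinely hard analytic content is the pair of relative-commutant computations for $\{\cup,\Cup\}$, which your outline does not engage with.
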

This theorem gives a new proof of the breakthrough result of Popa
\cite{popa:standardlattice}, showing that any subfactor planar algebra
$P$ can indeed be realized by the system of higher relative commutants
of a II$_{1}$ subfactor.

The key ingredient in the proofs will be representations of the algebras
$Gr_{k}P$ on Fock spaces. In order to define these we will suppose
that $P$ is given as a planar subalgebra of the full planar algebra
$P^{\Gamma}$ of some bipartite graph $\Gamma=\Gamma_{+}\amalg\Gamma_{-}$
as in \cite{jones:graphPlanarAlg}. This is always possible --- one
may for instance take $\Gamma$ to be the principal graph of $P$.
A basis of $P^{\Gamma}$ is formed by loops on $\Gamma$ starting
and ending in $\Gamma_{+}$ but we will define a slightly different
planar algebra structure from that of \cite{jones:graphPlanarAlg},
better adapted to graded multiplication. 

The Fock space will then be spanned (orthogonally) by paths of varying
lengths on $\Gamma$, ending in $\Gamma_{+}$. It is naturally $\mathbb{Z}/2\mathbb{Z}$-graded.
Note that $\Gamma$ may be infinite so we need to make a choice of
Perron-Frobenius eigenvector and eigenvalue for the adjacency matrix
of $\Gamma$. There will not necessarily be a Markov trace on $P^{\Gamma}$
so we work instead with the center-valued trace. This will restrict
to a Markov trace on $P$.

As in the theory of graph $C^{*}$-algebras \cite{Raeburn:graphAlgebras},
each edge $e$ of $\Gamma$ defines an operator $\ell_{e}$ (of grading
$1$) on the Fock space, creating an edge on a path. A loop of edges
$e_{1}\cdots e_{2p}$ in $P^{\Gamma}$ is then represented by the
product $c(e_{1})c(e_{2})\cdots c(e_{2p})$ where $c(e_{i})$ is a
version of $a_i\ell(e_{i})+a_i^{-1}\ell(e_{i}^{*})$ according to the
parity of $i$, where the factors $a_i$ are determined by the Perron-Frobenius eigenvector.
We also make use of the fact that the Fock space of loops and the
resulting II$_{1}$ factor can be embedded into a type III factor
canonically associated to the graph and its Perron-Frobenius eigenvector
using a free version of the second quantization procedure. 

The material is organized as follows:

\tableofcontents{}

\subsection{Notations.}

To aid the reader, we list here some notation used in the
paper.

\begin{itemize}
\item Bi-partite graph (\S\ref{sub:graphPlanar}): $\Gamma$; vertices:
$\Gamma$; even/odd vertices: $\Gamma_{\pm}$; Edges: $E$; positively/negatively
oriented edges: $E_{\pm}$; edges starting at $v$: $\Gamma_{+}(v)$;
edges ending at $v$: $\Gamma_{-}(v)$. All loops of length $k$ starting
at even/odd vertex: $L_{k}^{\pm}$; all loops starting at a positive/negative
vertex: $L^{\pm}$.
\item Planar algebra (Def. \ref{def:planarAlg}): $P$; $k$-box space:
$P_{k,\pm}$; positive/negative part: $P_{\pm}$; $P_{k}=P_{k,+}$.
Planar algebra of a graph (\S\ref{sub:graphPlanar}): $P^{\Gamma}$,
$P_{k,\pm}^{\Gamma}$ etc. Subfactor planar algebra: Def. \ref{subfactorplanar}.
\item Graded multiplications: $\wedge_{k}$ (Def. \ref{def:Grk}), the algebra
$Gr_{k}P$. Trace on $Gr_{k}P$: $Tr_{k}$ (Def. \ref{def:Trk}). 
\end{itemize}

\section{On planar algebras.\label{sec:PlanarAlgebra}}

\subsection{Definition}

We begin with a definition of planar algebra which will be recognizably
equivalent to other definitions \cite{jones:planar} and suited to
the purposes of this paper.

\begin{defn}
\label{tangle} (Planar $k$-tangles.) A planar $k$-tangle  will
consist of a smoothly embedded disc $D$ $(=D_{0})$ in $\mathbb{R}^{2}$
minus the interiors of a finite (possibly empty) set of disjoint smoothly
embedded discs $D_{1},D_{2},\dots,D_{n}$ in the interior of $D$.
Each disc $D_{i}$, $i\geq0$, will have an even number $2k_{i}\geq0$
of marked points on its boundary (with $k=k_{0}$). Inside $D$ and
outside $D_{1},D_{2},\dots,D_{n}$ there is also a finite set of disjoint
smoothly embedded curves called strings which are either closed curves
or whose boundaries are marked points of the $D_{i}$'s. Each marked
point is a boundary point of some string, and the strings meet the
boundaries of the discs transversally, only in the marked points.
The connected components of the complement of the strings in ${\displaystyle {\stackrel{\circ}{D}\backslash\bigcup_{i=1}^{n}D_{i}}}$
are called regions. Those parts of the boundaries of the discs between
adjacent marked points (and the whole boundary if there are no marked
points) will be called intervals. The regions of the tangle will be
shaded black and white so that two regions whose boundaries intersect
are shaded differently. (Such a shading is always possible, since
there is an even number of marked points.) The shading will be considered
to extend to the intervals which are part of the boundary of a region.
Finally, to each disc in a tangle there is a distinguished interval
on its boundary (which may be shaded black or white).
\begin{defn}
The set of internal discs of a tangle $T$ will be denoted ${\cal D}_{T}$. 
\end{defn}
\end{defn}
\begin{rem}
Observe that diffeomorphisms of $\mathbb{R}^{2}$ act on planar tangles
in the obvious way. In particular if $\Phi$ is a diffeomorphism it
induces a map $\Phi:{\cal D}_{T}\rightarrow{\cal D}_{\Phi(T)}$ 
\end{rem}
We will often have to draw pictures of tangles. To indicate the distinguished
interval on the boundary of a disc we will place a {*}, near to that
disc, in the region whose boundary contains the distinguished interval.
An example of a $4$-tangle illustrating all the above ingredients
is given below;\[
\vpic{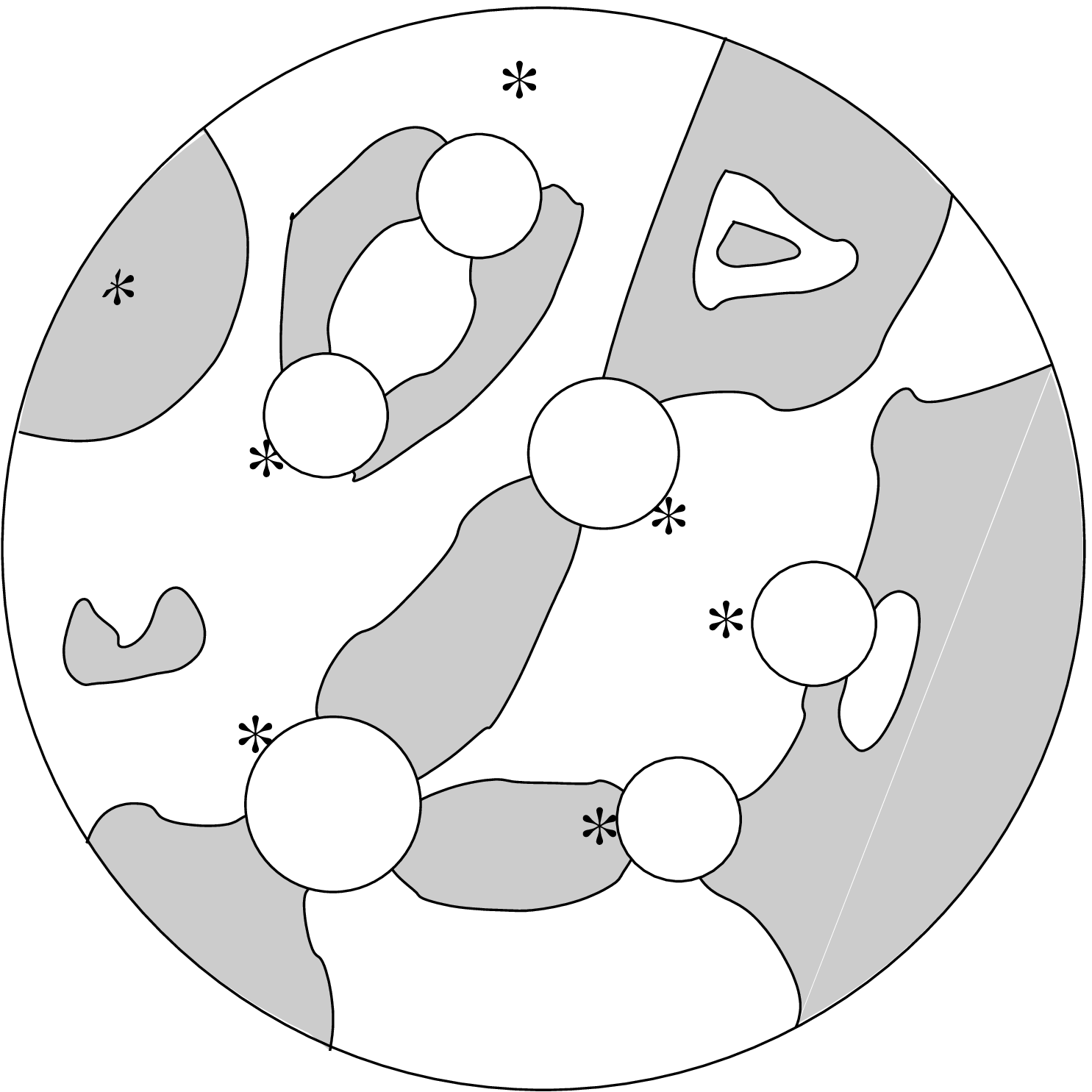}{1.5in}\]

We will often use pictures with a given number of strings to illustrate
a situation where the number of strings is arbitrary. We hope this
will not lead to misinterpretation. Similarly if the shading is implicit
or both possible shadings are intended we will suppress the shading.

Given planar $k$ and $k'$-tangles $T$ and $S$ respectively, we
say they are composable if

\begin{enumerate}
\item The outside boundary of $S$ is equal to the boundary of one of the
inside discs of $T$ where equality means that the marked points are
the same, the shadings of the intervals are the same and the distinguished
intervals are the same. And,
\item The union of the strings of $S$ and those of $T$ are smooth curves.
\end{enumerate}
\begin{defn}
If $T$ and $S$ are composable we define the composition $T\circ S$
to be the union $T\cup S$. The strings of $T\circ S$ are the unions
of the strings of $T$ and $S$. 
\end{defn}
Since the shadings of $T$ and $S$ agree on their common boundary
curve, it is easy to see that $T\cup S$ is a planar $k$-tangle.
This composition operation is often called \char`\"{}gluing\char`\"{}
as one may think of $S$ as being glued inside $T$.

We will now define a notion of planar algebra. Axioms can be subtracted
to obtain more general objects but for convenience in this paper the
term \char`\"{}planar algebra\char`\"{} will imply all the properties.

Before giving the formal definition we recall the notion of the Cartesian
product of vector spaces over an index set ${\cal I}$,\quad{}${\displaystyle \times_{i\in{\cal I}}V_{i}}$.
This is the set of functions $f$ from ${\cal I}$ to the union of
the $V_{i}$ with $f(i)\in V_{i}$. Vector space operations are point-wise.
Multilinearity is defined in the obvious way, and one converts multinearity
into linearity in the usual way to obtain ${\displaystyle {\otimes_{i\in{\cal I}}V_{i}}}$,
the tensor product indexed by ${\cal I}$. A Cartesian product over
the empty set will mean the scalars.

\begin{defn}
\label{def:planarAlg}A (unital) planar algebra $P$ will be a family
of $\mathbb{Z}/2\mathbb{Z}$-graded vector spaces indexed by the set
$\{\mathbb{N}\cup\{0\}\}$, where $P_{k,\pm}$ will denote the $\pm$
graded space indexed by $k$. To each planar tangle $T$ there will
be a multilinear map \[
Z_{T}:\times_{{\scriptstyle {D\in{\cal D}_{T}}}}P_{D}\rightarrow P_{D_{0}}\]
 where $P_{D}$ is the vector space indexed by half the number of
marked boundary points of $D$ and graded by $+$ if the distinguished
interval of $D$ is shaded white and $-$ if it is shaded black.

The maps $Z_{T}$ are subject to the following two requirements:
\end{defn}
\begin{enumerate}
\item (Isotopy invariance) If $\varphi$ is an \underbar{orientation preserving}
diffeomorphism of $\mathbb{R}^{2}$ then \[
Z_{T}=Z_{\varphi(T)}\]
 where the sets of internal discs of $T$ and $\varphi(T)$ are identified
using $\varphi$.

\item (Naturality) \[
Z_{T\circ S}=Z_{T}\circ Z_{S}\]
Where the right hand side of the equation is defined as follows: first
observe that ${\cal D}_{T\circ S}$ is naturally identified with $({\cal D}_{T}\setminus\{D'\})\cup{\cal D}_{S}$,
where $D'$ is the disc of $T$ containing $S$. Thus given a function
$f$ on ${\cal D}_{T\circ S}$ to the appropriate vector spaces, we
may define a function $\tilde{f}$ on ${\cal D}_{T}$ by \[
\tilde{f}(D)=\left\{ \begin{array}{ll}
f(D) & \mbox{if $D\neq D'$}\\
Z_{S}(f|_{{\cal D}_{S}}) & \mbox{if $D=D'$}\end{array}\right.\]
 Finally the formula $Z_{T}\circ Z_{S}(f)=Z_{T}(\tilde{f})$ defines
the right hand side.

The natural notation for $Z_{T}(f)$ is to write in the $\{f(D),\ D\in\mathcal{D}_{T}\}$
into $\mathbb{D}$. This is just like the notation {}``$y(x_{1},\ldots,x_{n})"$
for a function of several variables, where the $x_{i}$ are the $f(D)$,
and the internal discs correspond to the spaces between the commas.
(We also call the internal disks {}``input discs''). Thus if $R_{1}$
and $R_{2}$ are in $P_{2,+}$, $R_{3}$ is in $P_{2,-}$ and $R_{4}$
is in $P_{3,+}$ then the following picture is an element of $P_{4,-}:$\[
\vpic{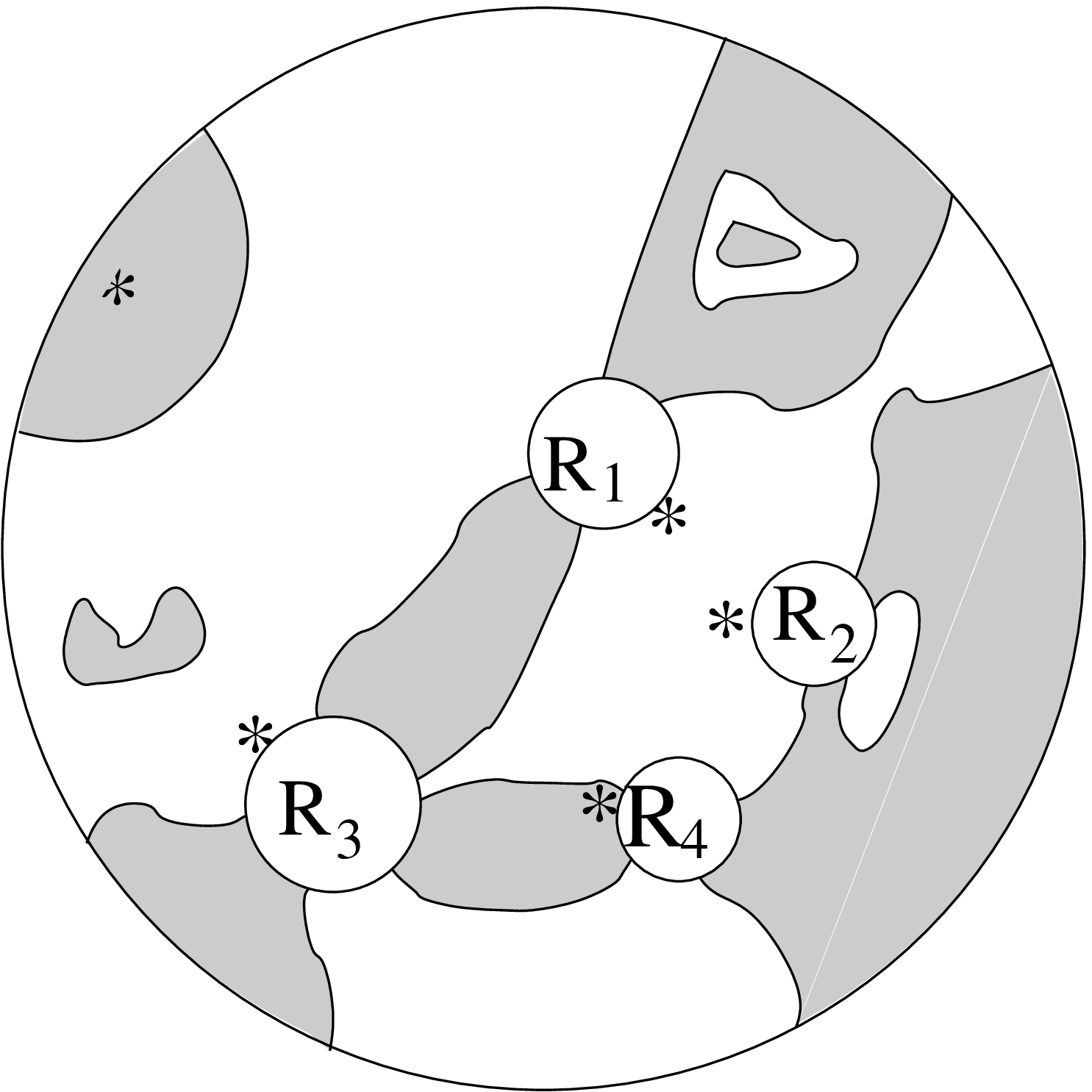}{1.5in}\]

The vector spaces $P_{n,\pm}$ will possess a conjugate linear involution
$*$, $x\rightarrow x^{*}$ with the compatibility requirement: \[
Z_{T}(f^{*})=Z_{\Phi(T)}(f\circ\Phi)^{*}\]
 whenever $\Phi$ is an orientation \underbar{reversing} diffeomorphism.

Observe that $P_{0,\pm}$ become \emph{unital commutative $*$-algebras}
under the multiplication operation (with either shading):\[
ab=\begin{array}{c}
\ovalnode{Outer}{
\psmatrix[rowsep=0.3cm]
\circlenode{a}{a}\\
\circlenode{b}{b}
\endpsmatrix
}\end{array}\]

\begin{defn}
\label{subfactorplanar} A \underbar{subfactor} planar algebra $P$
will be a planar algebra satisfying the following four conditions:
\begin{enumerate}
\item $\dim(P_{n,\pm})<\infty$ for all $(n,\pm)$
\item $\dim(P_{0,\pm})=1$
\end{enumerate}
Condition (\romannumeral 2) allows us to canonically identify $P_{0,\pm}$
with $\mathbb{C}$ as {*}-algebras, $1$ being $Z$ (a 0-tangle with
nothing in it$)$.

This further allows us to define a sesquilinear form on each $P_{n,\pm}$
by: \[
\langle a,b\rangle={\begin{array}{c}{\ovalnode{Outer}{
\psmatrix[rowsep=0.3cm] 
\circlenode{a}{b^*}
\\
\circlenode{b}{a}
\nccurve[angleA=-80,angleB=80]{a}{b}
\nput[labelsep=0]{-160}{b}{*}
\nput[labelsep=0]{190}{a}{*}
\nccurve[angleA=-60,angleB=60]{a}{b} 
\nccurve[angleA=-40,angleB=40]{a}{b}
\nccurve[angleA=260,angleB=100]{a}{b}
\nccurve[angleA=240,angleB=120]{a}{b} 
\nccurve[angleA=220,angleB=140]{a}{b}
\endpsmatrix}}\end{array}}\]
where the outside region is shaded according to $\pm$. 
\end{defn}
\item The form $\langle\ \ ,\ \ \rangle$ is positive definite.
\item $Z_{T_{1}}=Z_{T_{2}}$ where $T_{1}$ and $T_{2}$ are the following
two 0-tangles:\[
T_{1} = \circlenode{outer}{
\begin{array}{c}
\circlenode{inner}{T}
\ncloop[angleA=0,angleB=180,linearc=0.1,arm=0.17,loopsize=0.46]{inner}{inner}
\nput[labelsep=0]{200}{inner}{*}
\end{array}
}\qquad T_2= \circlenode{outer1}{
\begin{array}{c}
\circlenode{inner1}{T}
\nput[labelsep=0]{200}{inner1}{*}
\ncloop[angleA=180,angleB=0,linearc=0.1,arm=0.17,loopsize=0.46]{inner1}{inner1}
\end{array}
}
\]
 The last condition is topologically natural and corresponds to extremality
of the subfactor (\cite{pimsner-popa:entropyIndex},\cite[1.2.5]{popa:classificationIIacta}). This condition means that the partition
function of a fully-labeled zero-tangle (when considered without its
boundary disc) is actually well-defined for that zero-tangle on the
sphere $S^{2}$ obtained by adding a point at $\infty$ to $\mathbb{R}^{2}$.
It is natural then to suppress the outer disc of a $0$-tangle in
pictures.
\end{enumerate}
\begin{rem}
\label{delta} Once $P_{0,\pm}$ have been identified with the scalars
there is a canonical scalar $\delta$ associated with a subfactor
planar algebra with the property that the multilinear map associated
to any tangle containing a closed string is equal to $\delta$ times
the multilinear map of the same tangle with the closed string removed.
By positivity $\delta>0$ and it is well known that in fact the possible
values of $\delta$ form the set $\{4\cos^{2}\pi/n:n=3,4,5,\ldots\}\cup[4,\infty)$
\cite{jones:index}.
\begin{rem}
Since $\delta\neq0$ it is clear that all the spaces $P_{n,-}$ are
redundant and subfactor planar algebra could be axiomatized in terms
of $P_{n,+}$. For this reason we will use in what follows $P_{n}$
to denote $P_{n,+}$ (even in the non-subfactor case). 
\begin{rem}
\label{rem:UsualOps}In the development of planar algebras the following
structures played a major role:
\begin{enumerate}
\item Multiplication: Each $P_{n,\pm}$ is a {*}-algebra with the involution
defined above and the multiplications:
\[
ab={\begin{array}{c}{\ovalnode{Outer}{
\begin{array}{c}
\psmatrix[rowsep=0.3cm] 
\circlenode{a}{b}
\\
\circlenode{b}{a}
\ncdiag[arm=0,angleA=60,angleB=81]{a}{Outer}
\ncdiag[arm=0,angleA=80,angleB=87]{a}{Outer}
\ncdiag[arm=0,angleA=100,angleB=93]{a}{Outer}
\ncdiag[arm=0,angleA=120,angleB=99]{a}{Outer}

\nccurve[angleA=-80,angleB=80]{a}{b}
\nput[labelsep=0]{130}{b}{*}
\nput[labelsep=0]{130}{a}{*}
\nccurve[angleA=-60,angleB=60]{a}{b} 
%\nccurve[angleA=-40,angleB=40]{a}{b}
\nccurve[angleA=260,angleB=100]{a}{b}
\nccurve[angleA=240,angleB=120]{a}{b} 
%\nccurve[angleA=220,angleB=140]{a}{b}

\ncdiag[arm=0,angleA=-60,angleB=-81]{b}{Outer}
\ncdiag[arm=0,angleA=-80,angleB=-87]{b}{Outer}
\ncdiag[arm=0,angleA=-100,angleB=-93]{b}{Outer}
\ncdiag[arm=0,angleA=-120,angleB=-99]{b}{Outer}

\nput[labelsep=-0.2]{110}{Outer}{*}

\endpsmatrix\end{array}
}}\end{array}}\]
There are two choices of shadings which give in general non-isomorphic
algebra structures. (We shall refer to this multiplication sometimes
as the {}``usual'' multiplication on $P_{n,\pm}$).
\item Trace: Each $P_{n,\pm}$ is equipped with a linear map $Tr:P_{n,\pm}\to P_{0,\pm}$
which is given by \[
Tr(x) = \begin{array}{c}
\circlenode{Outer}{
	\begin{array}{c}
		\circlenode{x}{x}
		\ncarc[arcangle=60,ncurv=1]{x}{x}
		\ncarc[arcangle=30,ncurv=4]{x}{x}
		\nput[labelsep=0]{170}{x}{*}
		\nput[labelsep=0.05]{90}{x}{\cdots}
		%\ncarc[angle=120]{x}{x}
	\end{array}
}
\end{array}\]
\item The Temperley-Lieb tangles: each tangle consisting of an outside box
all of whose $2k$ boundary points are connected by (non-crossing)
strings inside of the box determines an element of $P_{k,\pm}$, pairing
depending on the shading of the region containing $*$. The set of
such tangles is denoted by $TL(k)$.
\item The Jones projections: $\mathbf{e}_{k}\in P_{k,+}$ is given by the
Temperley-Lieb tangle (having $2k$ boundary points):
\[
\mathbf{e}_k = \begin{array}{c}\rnode{box}{\psframebox[framearc=0.4]{\vbox to2em{\vfill\hbox to10em{\hfill}}}
	\ncbar[ncurv=-1,arm=-0.4,angle=90,offsetA=0.2,offsetB=0.2,linearc=0.2]{box}{box}
	\ncbar[ncurv=-1,arm=-0.8,angle=90,offsetA=0.95,offsetB=0.95,linearc=0.2]{box}{box}
	\ncbar[ncurv=-1,arm=-0.4,angle=90,offsetA=1.5,offsetB=-1.1,linearc=0.2]{box}{box}
	\ncbar[ncurv=-1,arm=-0.4,angle=90,offsetA=-1.5,offsetB=1.1,linearc=0.2]{box}{box}
	\nput[labelsep=-0.75em,offset=0.55]{90}{box}{\cdots}
	\nput[labelsep=-0.75em,offset=-0.55]{90}{box}{\cdots}
	\nput[labelsep=-0.75em,offset=0]{90}{box}{*}
}\end{array}.
\]
\end{enumerate}
\begin{rem}
\label{rectangles} (Rectangles). It is sometimes very convenient
to use rectangles rather than circles for the input and output discs.
Strictly speaking this is not allowed since the boundaries are supposed
to be smooth. But nothing will happen at the corners of the rectangles
so one may simply interpret a picture of a rectangle as one with smoothed
corners. Use of horizontal rectangles also makes it possible to avoid
specifying the first interval which we will always suppose to be the
one containing the left hand vertical part of the rectangle. 
\begin{rem}
(Outer disks and shading). We occasionally omit the outer disk when
describing a planar algebra element, especially in the case that there
are no boundary points on the outer disk. Also, unless the shading
is explicitly indicated in a picture, we follow the convention that
the region adjacent to the boundary region marked with a $*$ is unshaded
(white).
\end{rem}
\end{rem}
\end{rem}
\end{rem}
\end{rem}

\subsection{Graded algebra structures.}

\begin{defn}
If $P$ is a planar algebra we define a graded algebra $GrP$ as follows.
As a graded vector space $GrP=\bigoplus_{n=0}^{\infty}P_{n,+}$ and
the graded product $\wedge:P_{n}\times P_{m}\rightarrow P_{m+n}$
is given by the tangle below which puts the element of $P_{n}$ entirely
to the left of the element of $P_{n}$:\[
a\wedge b = 
\begin{array}{c}
	\ovalnode{Outer}{\vbox to3.5em{\vfill}
		\begin{array}{c}
			\psmatrix[colsep=0.1]
			\circlenode{a}{\vbox to1.1em{\vfill\hbox to1.1em{\hfill$a$\hfill}\vfill} } &
			\circlenode{b}{\vbox to1.1em{\vfill\hbox to1.1em{\hfill$b$\hfill}\vfill}}
			\endpsmatrix
			\ncdiag[arm=0,angleA=110,angleB=140]{a}{Outer}
			\ncdiag[arm=0,angleA=100,angleB=130]{a}{Outer}
			\ncdiag[arm=0,angleA=90,angleB=120]{a}{Outer}
			\ncdiag[arm=0,angleA=80,angleB=110]{a}{Outer}
			\nput[labelsep=0]{140}{a}{*}
			\nput[labelsep=0]{140}{b}{*}
			\nput[labelsep=-0.22]{147}{Outer}{*}
			
			\ncdiag[arm=0,angleA=60,angleB=30]{b}{Outer}
			\ncdiag[arm=0,angleA=70,angleB=40]{b}{Outer}
			\ncdiag[arm=0,angleA=80,angleB=50]{b}{Outer}
			\ncdiag[arm=0,angleA=90,angleB=60]{b}{Outer}
			\ncdiag[arm=0,angleA=100,angleB=70]{b}{Outer}
			\ncdiag[arm=0,angleA=110,angleB=80]{b}{Outer}

		\end{array}
	}
\end{array}
\]
\end{defn}
(The shading in the picture above is determined by saying that the
region adjacent to the marked interval on the outer box is unshaded
(white); as before, $*$'s denote the marked intervals on the disks).
Note that one could also define a dual structure changing $+$ to
$-$ and changing the shading in the above figure.

%%%% 
As a graded algebra a subfactor planar algebra is just the free graded algebra
on a certain graded vector space as we shall see. 

If $\cal P$ is a subfactor planar algebra let $\mathfrak M$ be the 2-sided ideal of
$Gr\cal P$ spanned by all elements of degree $1$ or more. $\mathfrak M$.
Each graded piece of $\mathfrak M$ has an innner product as defined above.
For each $n\geq 1$ let $\mathfrak N_n$ be the orthogonal complement of $(\mathfrak M ^2)_n$
in $\mathfrak M _n$.

\begin{thm} With notation as above, $Gr\cal  P$ is the free graded algebra generated freely
by $\displaystyle \cup_{n=1}^\infty  \mathfrak N_n$.
\end{thm}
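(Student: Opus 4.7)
The goal is to prove that the canonical graded algebra homomorphism
\[
\phi : T(\mathfrak{N}) \longrightarrow Gr\mathcal{P}, \qquad \mathfrak{N} := \bigoplus_{n \geq 1} \mathfrak{N}_n,
\]
extending the inclusion $\mathfrak{N} \hookrightarrow Gr\mathcal{P}$, is an isomorphism of graded algebras. The plan is to establish surjectivity and injectivity in sequence, with the latter reduced to a question on an associated graded where the planar structure of $\wedge$ must be used decisively.

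Surjectivity I would prove by induction on the degree $n$. The cases $n = 0, 1$ are immediate, since $(\mathfrak{M}^2)_1 = 0$ forces $\mathfrak{N}_1 = \mathfrak{M}_1 = P_1$. For $n \geq 2$, the orthogonal decomposition defining $\mathfrak{N}_n$ gives $\mathfrak{M}_n = \mathfrak{N}_n \oplus (\mathfrak{M}^2)_n$, so any $x \in P_n$ splits as $\nu + \mu$ with $\nu \in \mathfrak{N}_n$ and $\mu \in \sum_{a+b=n,\, a,b \geq 1} \mathfrak{M}_a \wedge \mathfrak{M}_b$. By the inductive hypothesis each $\mathfrak{M}_a, \mathfrak{M}_b$ with $a, b < n$ is already in $\phi(T(\mathfrak{N}))$, hence so is $\mu$.

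For injectivity, I would introduce the decreasing ideal filtration $F_k = \mathfrak{M}^k$ on $Gr\mathcal{P}$ and the tensor-degree filtration $G_k = \bigoplus_{\ell \geq k} \mathfrak{N}^{\otimes \ell}$ on $T(\mathfrak{N})$; both are finite in each fixed degree, since $F_k \cap P_n = 0$ for $k > n$. The map $\phi$ sends $G_k$ into $F_k$ and therefore induces
\[
\mathrm{gr}\,\phi : T(\mathfrak{N}) = \mathrm{gr}\, T(\mathfrak{N}) \longrightarrow \mathrm{gr}\, Gr\mathcal{P} = \bigoplus_{k \geq 0} F_k/F_{k+1}.
\]
Injectivity of $\phi$ follows immediately from injectivity of $\mathrm{gr}\,\phi$ by the standard leading-term argument. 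The problem thus reduces to showing that the multiplication maps $\mu_k : \mathfrak{N}^{\otimes k} \to F_k/F_{k+1}$ are injective for every $k \geq 1$.

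This last injectivity is the heart of the matter and the main obstacle, since a general graded algebra $A$ with $A_+/A_+^2 \cong \mathfrak{N}$ is not free in general (the example $\mathbb{C}[x]/(x^2)$ shows this already in degree $1$), so the geometry of $\wedge$ must be used nontrivially. My strategy is to construct, for each $a \in \mathfrak{N}_n$, a planar ``leftmost-letter extraction'' operator $\partial_a : Gr\mathcal{P} \to Gr\mathcal{P}$, defined by the tangle that pairs $a^*$ with the leftmost $2n$ strands of its argument. The identity one has to verify is
\[
\partial_a(a_1 \wedge a_2 \wedge \cdots \wedge a_k) \equiv \langle a, a_1 \rangle \cdot (a_2 \wedge \cdots \wedge a_k) \pmod{F_k} \qquad (a_i \in \mathfrak{N}),
\]
where $\langle \, \cdot \, , \, \cdot \, \rangle$ is the subfactor planar algebra inner product. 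Positivity of that form on each $\mathfrak{N}_n$ permits a choice, for each basis vector $e$ of $\mathfrak{N}_n$, of a dual $e^\vee$ with $\langle e^\vee, e' \rangle = \delta_{e, e'}$; iteratively applying the corresponding $\partial_{e^\vee}$ then peels off the leading letter of any word in $\mathfrak{N}^{\otimes k}$, reducing injectivity of $\mu_k$ to that of $\mu_{k-1}$ and closing the induction. The delicate point, and where I expect the technical weight to lie, is verifying the displayed identity modulo $F_k$ from the strict side-by-side geometry of $\wedge$: because the letters $a_i \in \mathfrak{N}$ are defined only implicitly as orthogonal complements, one must check that the planar adjoint of left-$\wedge$-multiplication by $a$ genuinely annihilates the leftmost letter and that all cross-interactions with $a_2, \ldots, a_k$ raise the $F_\bullet$-filtration level.
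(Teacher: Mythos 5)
Your surjectivity argument is the same as the paper's, and your reduction of injectivity to injectivity of the graded multiplication maps $\mu_k:\mathfrak N^{\otimes k}\to F_k/F_{k+1}$ via the $\mathfrak M$-adic filtration is a legitimate move. The paper takes a more direct route for injectivity: it shows each $mult_\pi$ is an \emph{isometry} for the Hilbert tensor product structure (hence injective), and that the images of $mult_\pi$ and $mult_\rho$ for distinct compositions $\pi\ne\rho$ are \emph{orthogonal}, by exhibiting the inner product $\langle mult_\pi(x), mult_\rho(y)\rangle$ as the pairing of $x_1\in\mathfrak N_{\pi_1}$ against an element $y_1\wedge w\in\mathfrak M^2_{\pi_1}$. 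Your filtration/derivative strategy is a packaging of that same orthogonality, but it is not a self-contained alternative as written.

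There are two concrete gaps. First, the displayed identity $\partial_a(a_1\wedge\cdots\wedge a_k)\equiv\langle a,a_1\rangle\,(a_2\wedge\cdots\wedge a_k)\pmod{F_k}$ is exactly the hard part, and it does not follow from the ``side-by-side geometry'' of $\wedge$ alone. In the case $\deg a_1<\deg a$ (degree mismatch, so the right side is $0$), you need $\partial_a(a_1\wedge a_2)\in\mathfrak M^2$. Since $\partial_a$ is the adjoint of left $\wedge$-multiplication by $a$ — i.e.\ $\langle\partial_a w,b\rangle=\langle w,a\wedge b\rangle$ — this is equivalent to $\langle a_1\wedge a_2,\,a\wedge b\rangle=0$ for all $b\in\mathfrak N$ of the complementary degree, which is precisely the orthogonality lemma the paper proves with its tangle. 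One cannot invoke $a\perp\mathfrak M^2$ directly here because both $a_1\wedge a_2$ and $a\wedge b$ lie in $\mathfrak M^2$; the paper's argument is what shows the pairing still vanishes, by rewriting the closed diagram so that $a_1$ (the box of \emph{smaller} first degree) appears as the left factor of a $\wedge$-product paired against $a$. Second, even granting the identity on $\mathfrak N$-words, the peeling step quietly uses $\partial_a(F_{k+1})\subset F_k$, and this fails for elementary reasons: take $w_1\wedge\cdots\wedge w_{k+1}\in F_{k+1}$ with $\deg w_1=\cdots=\deg w_j=1$ and $\deg a>j\geq 2$; then $\partial_a$ eats several factors at once and the output has fewer than $k$ visible $\wedge$-factors, and showing that the residual blob nonetheless lies in $\mathfrak M^{j+1}$ is again the same orthogonality assertion, now applied to $\mathfrak M$-words rather than $\mathfrak N$-words. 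So both gaps reduce to a lemma of the type the paper proves, and the claim that they follow from the geometry of $\wedge$ is not justified. The paper's isometry-plus-orthogonality argument avoids the filtration bookkeeping entirely and is the cleaner route.
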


\begin{proof} Let $\pi =( \pi_1,\pi_2,...,\pi_k)$ be an ordered $k$-tuple of integers with $\pi_i\geq 1$ and
$\displaystyle\sum_{j=1}^k \pi_k =n$. Then multiplication defines linear maps 
$$mult_\pi: \mathfrak N _{\pi_1}\otimes \mathfrak N_{\pi_2}\otimes....\mathfrak N_{\pi_k} \rightarrow {\cal P}_n.$$
 By induction the images of $mult_\pi$ span $\mathfrak M_n^2$ as $\pi$ varies. So,
 together with $\mathfrak N_n$ they span ${\cal P}_n$. Thus the theorem follows from the 
 two assertions:\\
 \romannumeral 1 ) Each $mult_\pi$ is injective.  \\
 \romannumeral 2 ) The images of the $mult_\pi$ are orthogonal for different $\pi$.\\
 
 To see \romannumeral 1), note that each $mult_\pi$ is an isometry if we give
  $$\mathfrak N _{\pi_1}\otimes \mathfrak N_{\pi_2}\otimes....\mathfrak N_{\pi_k}$$
  the Hilbert space tensor product structure.
 
 To see \romannumeral 2), let $\pi$ and $\rho$ be two distinct partitions of $n$ as above.
 Suppose $\pi_1>\rho_1$.  Consider the following picture:\\
 %\vspace{40pt}
\smallskip\hspace{1in}  \vpic{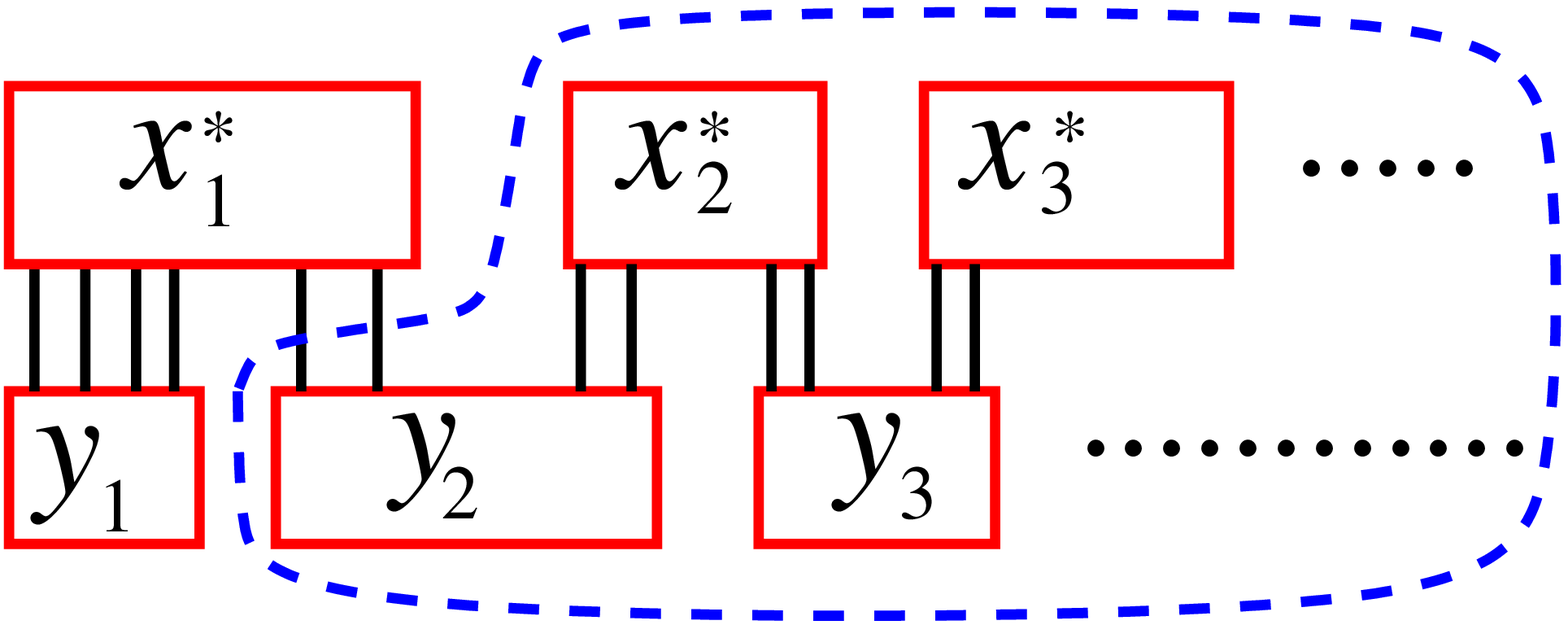} {2in}\smallskip
 
 This is the inner product of an element $y_1\otimes y_2\otimes ...$ in $\mathfrak N _{\rho_1}\otimes \mathfrak N_{\rho_2}\otimes....\mathfrak N_{\rho_k}$
 with an element $x_1\otimes x_2\otimes ...$ in $\mathfrak N _{\pi_1}\otimes \mathfrak N_{\pi_2}\otimes....\mathfrak N_{\pi_k}$.
 (Here $\pi_1=3,\pi_2=2$ and $\rho_1=2=\rho_2=\rho_3$. One may evaluate the tangle inside the dashed curve to obtain
 an element of ${\mathfrak M}_{\pi_1-\rho_1}$. Thus the figure is actually the inner product of $x_1$ with an element of $\mathfrak M^2$,
 thus it is zero. So the images of $mult_\pi$ and $mult_\rho$ are orthogonal unless $\pi_1=\rho_1$. Continuing in this way we see that
 the images of $mult_\pi$ and $mult_\rho$ are orthogonal unless $\pi=\rho$.
\end{proof}
\begin{rem} Writing elements of $Gr\cal P$ as sums of products of elements 
orthogonal to $\mathfrak M^2$ times arbitrary elements gives, by an easy argument
with generating functions, $$ \Psi_{\cal P}(z)= 1-\frac{1}{\Phi_{\cal P}(z)}$$
Where $\Psi_{\cal P}(z)$ is the generating function for $\dim({\mathfrak M}/{\mathfrak M} ^2)_n$,
and $\Phi_{\cal P}(z)$ is the generating function for $\dim P_n$. In general
if $\Phi_n$ is the generating function for the dimensions of the graded vector
space ${\mathfrak M ^n}/{\mathfrak M ^{n+1}}$ we have $\Phi_n=\Phi (\Phi_n - \Phi_{n+1})$
so that $\Phi_n=(1-1/\Phi )^n$.
\end{rem}

Although the graded algebra structure is not commutative even up to 
a sign, the presence of the cyclic group action gives a kind of 
``cyclic commutativity'' as follows where $\rho$ denotes the action of 
the counterclockwise rotation tangle on $P_n$:
\begin{prop} If $\cal P$ is a planar algebra then 
$$\rho^{\deg a}(a\wedge b)= b\wedge a$$
\end{prop}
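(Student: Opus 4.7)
The plan is to prove this identity entirely at the level of tangles, by exhibiting an orientation-preserving diffeomorphism of $\mathbb{R}^2$ taking the composite tangle defining the left-hand side to the tangle defining the right-hand side, and then invoking the isotopy invariance and naturality axioms. Set $n = \deg a$ and $m = \deg b$. By naturality, I would first rewrite $\rho^{n}(a \wedge b)$ as the multilinear map $Z_{\rho^{n}\circ T_{\wedge}}$ applied to the pair $(a,b)$, where $T_{\wedge}$ is the graded-product tangle and $\rho$ is the rotation tangle acting on $P_{n+m}$. A single $\rho$ shifts the distinguished interval of the outer boundary by one ``through-string pair'', i.e.\ by two marked points counterclockwise, which is the unique nontrivial shift preserving the shading and hence sending $P_{n+m,+}$ to $P_{n+m,+}$. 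Consequently $\rho^{n}$ shifts the outer distinguished interval by exactly $2n$ marked points counterclockwise.

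Next, I would read off the connectivity of this composite tangle. In $T_{\wedge}$, the $2n$ boundary points of the input disc receiving $a$ are connected (by non-crossing strings routed over the top) to the first $2n$ outer boundary points encountered counterclockwise from the outer distinguished interval, and the $2m$ points of the disc receiving $b$ are connected to the next $2m$ outer boundary points. Composing with $\rho^{n}$ relocates the outer distinguished interval to precisely the point separating the $a$-strings from the $b$-strings. Going counterclockwise from the new distinguished interval, one therefore first traverses the $2m$ strings coming from $b$ and then the $2n$ strings coming from $a$. This is exactly the connectivity and shading pattern of the tangle $T_{\wedge}$ with $b$ as the left input and $a$ as the right input. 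An orientation-preserving rotation of $\mathbb{R}^{2}$ about the center of the outer disc interchanges the positions of the two input discs and realizes the equality of tangles; isotopy invariance then gives $\rho^{n}(a \wedge b) = b \wedge a$.

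The only real subtlety I expect to encounter is fixing the convention for $\rho$: one must ensure that a single rotation shifts by two boundary marked points rather than one, since otherwise $\rho$ would flip the shading and no longer act on $P_{n+m,+}$. Once this convention is pinned down so that $n$ iterations produce a shift of $2n$ points (matching the number of boundary points contributed by $a$ to the outer disc in the graded product tangle), the remainder is essentially pictorial. Note that the $n=0$ case is the commutativity of $P_{0,+}$ as a $*$-algebra, which was already observed.
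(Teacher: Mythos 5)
Your argument is correct and is exactly the argument the paper has in mind; the paper's entire proof is ``Just draw the picture,'' and your write-up is simply a careful verbal rendering of that picture: identify the composite tangle $\rho^n\circ T_\wedge$, observe that $\rho^n$ moves the outer distinguished interval past the $2n$ boundary points contributed by $a$, and then note the resulting labeled tangle is isotopic (via an orientation-preserving diffeomorphism) to $T_\wedge(b,a)$. You also correctly flag the one place a reader could slip — the convention that a single application of $\rho$ shifts by two marked points so as to preserve the shading — which is indeed the only point of real care here.
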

\begin{proof} Just draw the picture.\end{proof}
\begin{rem} The multiplication in an exterior algebra can be made to
satisfy exactly the same commutativity formula by making the cyclic group act by
the appropriate sign in each degree.
\end{rem}

Besides $Gr{\cal P}$ we will need other \char`\"{}shifted\char`\"{}
graded {*}-algebra structures on ${\cal P}$ in order to define a
subfactor and analyze its tower.

\begin{defn}
\label{def:Grk}Given a planar algebra $P=(P_{n})$ and an integer
$k\geq0$ we make $\oplus_{n=k}^{\infty}P_{n}$ into an associative
(unital) {*}-algebra with multiplication $\wedge_{k}:P_{m}\times P_{n}\rightarrow P_{m+n-k}$
given by the following formula: \[
A\wedge_{k} B = \begin{array}{c}
	\ovalnode{Outer}{\vbox to3em{\vfill}\begin{array}{c}
		\psmatrix[colsep=0.2]
			\circlenode{A}{\vbox to2.2em{\vfill \hbox{$A$}\vfill}} &
			\circlenode{B}{\vbox to2.2em{\vfill\hbox{$B$}\vfill}}
			\ncarc[arcangleA=30,arcangleB=30,ncurv=1]{A}{B}
			\ncarc[arcangleA=40,arcangleB=40,ncurv=1]{A}{B}
			\ncarc[arcangleA=50,arcangleB=50,ncurv=1]{A}{B}
			\nput[labelsep=-0.02]{115}{B}{*}
			\ncdiag[angleA=155,angleB=170,arm=-0,linearc=0.2]{A}{Outer}
			\ncdiag[angleA=145,angleB=160,arm=-0,linearc=0.2]{A}{Outer}
			\ncdiag[angleA=135,angleB=150,arm=-0,linearc=0.2]{A}{Outer}
			\nput[labelsep=-0.02]{115}{A}{*}
			\nput[labelsep=-0.25]{140}{Outer}{*}
			\ncdiag[angleA=100,angleB=130,arm=-0,linearc=0.2]{A}{Outer}
			\ncdiag[angleA=65,angleB=92,arm=0]{A}{Outer}
			\nput[labelsep=-0.25]{112}{Outer}{\cdots}
			\nput[labelsep=-0.05,rot=43]{155}{Outer}{\overbrace{\quad\qquad}^k}
			\nput[labelsep=-0.45,rot=50]{136}{B}{\text{\tiny $\scriptscriptstyle{\underbrace{\quad\!\!}_k}$}}
			\ncdiag[angleA=95,angleB=65,arm=0]{B}{Outer}
			\ncdiag[angleA=25,angleB=10,arm=0]{B}{Outer}
			\nput[labelsep=-0.25,rot=-28]{37}{Outer}{\cdots}
		\endpsmatrix
	\end{array}}
\end{array}
\]
The involution (denoted
by $\dagger$ to distinguish it from the usual involution $*$ on
$P_{k}$) is given by\[
A^\dagger = \circlenode{Outer}{\vbox to2.5em{\vfill} 
	\circlenode{A}{A^*}
	\ncdiag[angleA=155,angleB=170,arm=-0,linearc=0.2]{A}{Outer}
	\ncdiag[angleA=145,angleB=160,arm=-0,linearc=0.2]{A}{Outer}
	\ncdiag[angleA=135,angleB=150,arm=-0,linearc=0.2]{A}{Outer}
	\nput[labelsep=-0.25]{137}{Outer}{*}
	\ncdiag[angleA=120,angleB=120,arm=-0,linearc=0.2]{A}{Outer}
	\ncdiag[angleA=60,angleB=60,arm=0]{A}{Outer}
	\nput[labelsep=-0.25]{90}{Outer}{\cdots}
	\nput[labelsep=-0.0]{52.5}{A}{*}
	\nput[labelsep=-0.05,rot=63]{158}{Outer}{\hbox{\tiny$\overbrace{\quad}^k$}}
		\nput[labelsep=-0.05,rot=-63]{22}{Outer}{\hbox{\tiny$\overbrace{\quad}^k$}}
	\ncdiag[angleA=25,angleB=10,arm=-0,linearc=0.2]{A}{Outer}
	\ncdiag[angleA=35,angleB=20,arm=-0,linearc=0.2]{A}{Outer}
	\ncdiag[angleA=45,angleB=30,arm=-0,linearc=0.2]{A}{Outer}
}	
\]
The shading in both figures above is determined by the condition that
the marked boundary region $*$ is adjacent to an unshaded (white)
region. Here $A^{*}$ means $\phi(A)$ where $\phi$ is an orientation-reversing
diffeomorphism (cf. Def. \ref{def:planarAlg}(2)).

We denote this {*}-algebra by $Gr_{k}P$.
\end{defn}

\subsection{The traces $Tr_{k}$. }

Any planar algebra contains in a canonical way the Temperley-Lieb
planar algebra $TL$. Indeed, $TL$ is spanned by TL diagrams: a TL
diagram is a diagram that has no inner disks, and all of whose strings
connect points on the outer disk. Any such diagram is naturally an
element of $P$.

\begin{defn}
\label{def:Trk}Let $T_{n}$ be the sum of all TL diagrams having
$2n$ points on the outer disk represented pictorially below (for
$n=3$):\[
\ovalnode{OuterT}{
\ovalnode{T}{T_n}
\ncdiag[angleA=-40,angleB=-40,arm=-0,linearc=0.2]{T}{OuterT}
\ncdiag[angleA=-60,angleB=-60,arm=-0,linearc=0.2]{T}{OuterT}
\ncdiag[angleA=-80,angleB=-80,arm=-0,linearc=0.2]{T}{OuterT}
\ncdiag[angleA=-100,angleB=-100,arm=-0,linearc=0.2]{T}{OuterT}
\ncdiag[angleA=-120,angleB=-120,arm=-0,linearc=0.2]{T}{OuterT}
\ncdiag[angleA=-140,angleB=-140,arm=-0,linearc=0.2]{T}{OuterT}
}
\]
(The position of the $*$ is irrelevant, by since the set of TL diagrams is invariant under a rotation by $2\pi/n$).  The trace $Tr_{k}(x)$ is defined for $x\in P_{m}$, $m\geq k$, and
is valued in the zero box space of $P$:
\end{defn}
\[
\rnode{Outer}{\psframebox[framearc=0.4]{\vbox to 6em{\vfill}
\psmatrix[rowsep=0.2,colsep=0.5]
	& \ovalnode{T}{T_n} & \\
	& \rnode{x}{\psframebox[framearc=0.4]{\vbox to2em{\vfill\hbox to 7em{\hfill$x$\hfill}\vfill}}}&
	%\ncdiag[angleA=-40,angleB=40,arm=-0,linearc=0.2]{T}{x}
	\ncdiag[angleA=-60,angleB=60,arm=-0,linearc=0.2]{T}{x}
	%\ncdiag[angleA=-80,angleB=80,arm=-0,linearc=0.2]{T}{x}
	%\ncdiag[angleA=-100,angleB=100,arm=-0,linearc=0.2]{T}{x}
	\ncdiag[angleA=-120,angleB=120,arm=-0,linearc=0.2]{T}{x}
	%\ncdiag[angleA=-140,angleB=140,arm=-0,linearc=0.2]{T}{x}
	%\ncloop[angleA=0,angleB=180,loopsize=1.3,arm=0.2,linearc=0.2]{x}{x}
	\nput[labelsep=0]{123}{x}{*}
	\ncbar[angle=90,arm=1.3,offsetA=1.1,offsetB=1.1,linearc=0.2]{x}{x}
	\ncbar[angle=90,arm=1.1,offsetA=0.6,offsetB=0.6,linearc=0.2]{x}{x}
	\nput[labelsep=0.1]{90}{x}{\hbox{\tiny$\cdots$}}
	\nput[offset=0.84]{90}{x}{\hbox{\tiny$\cdots$}}
	\nput[offset=-0.86]{90}{x}{\hbox{\tiny$\cdots$}}
	\nput[labelsep=-0.55,offset=0.84]{90}{x}{\underbrace{\quad}_k}
\endpsmatrix
}}
\]
where $n=m-k$ (in other words, there are $k$ strings surrounding
$T_{n}$).

\begin{lem}
$Tr_{k}$ is a trace on $Gr_{k}P$ if endowed with the multiplication
$\wedge_{k}$.
\end{lem}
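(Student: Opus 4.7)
The plan is to prove $Tr_k(A\wedge_k B)=Tr_k(B\wedge_k A)$ for $A\in P_m$, $B\in P_n$ (with $m,n\geq k$) by a sphere-isotopy argument. First, I would unfold the definitions so that both sides appear as sums over Temperley--Lieb diagrams:
\[
Tr_k(A\wedge_k B)\;=\;\sum_{\alpha\in TL(m+n-2k)}Z_{\tau_\alpha}(A,B),
\]
where $\tau_\alpha$ is the zero-tangle assembled by placing $A$ on the left and $B$ on the right, joining them by the $k$ ``inner'' strings of the $\wedge_k$-gluing, running the $k$ ``outer'' closure strings of $Tr_k$ around the outside, and plugging the specific TL diagram $\alpha$ into the $T_{m+n-2k}$ cap on top. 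By condition (iv) of Definition \ref{subfactorplanar} (extremality), each $\tau_\alpha$ lives naturally on $S^2$, and similarly $Tr_k(B\wedge_k A)=\sum_\beta Z_{\tau'_\beta}(B,A)$ for the analogous tangles $\tau'_\beta$ with $A$ and $B$ in swapped positions.

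Next, I would exhibit an orientation-preserving diffeomorphism $\phi$ of $S^2$ that interchanges the two input disks of $\tau_\alpha$ and swaps the ``inner'' and ``outer'' bundles of $k$ strings. Such a $\phi$ exists because on the sphere the two $k$-string bundles are topologically symmetric: a rotation by $\pi$ about a suitable axis does the job, and can be chosen orientation-preserving. The image $\phi(\tau_\alpha)$ then has the same overall shape as $\tau'_{\phi^{*}\alpha}$ for some induced permutation $\alpha\mapsto\phi^{*}\alpha$ of $TL(m+n-2k)$, up to cyclic rotations of the input disks of $A$ and $B$ (which shift the distinguished intervals).

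Finally, I would absorb the remaining rotations by invoking the invariance of the set $TL(n)$ under cyclic rotation, noted in Definition \ref{def:Trk}. The map $\alpha\mapsto\phi^{*}\alpha$ is a bijection of $TL(m+n-2k)$, so the sum over $\alpha$ is unchanged, and any residual rotation of the input disks of $A$ and $B$ can be transferred onto $T_{m+n-2k}$ and absorbed for the same reason. Combined with the isotopy-invariance axiom (Definition \ref{def:planarAlg}(1)), which gives $Z_{\tau_\alpha}(A,B)=Z_{\phi(\tau_\alpha)}(A,B)$, this yields the desired identity.

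The main obstacle is the bookkeeping in this final step: precisely tracking how $\phi$ acts on the distinguished intervals of the input disks and on the TL sum, and verifying that every rotational discrepancy is canceled by the rotation-invariance of $T_{m+n-2k}$. A cleaner repackaging of the same argument would establish first an analogue of the earlier cyclic-commutativity relation, namely $\rho^{\,j}(A\wedge_k B)=B\wedge_k A$ for an appropriate power $j$, together with $Tr_k\circ\rho=Tr_k$ (an immediate consequence of the rotation invariance of $T_{m-k}$), and then chain them to conclude.
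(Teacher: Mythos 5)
Your primary argument invokes extremality (the spherical invariance axiom, condition (iv) of Definition \ref{subfactorplanar}), but the lemma is stated for a \emph{general} planar algebra $P$, and this is essential: later in the paper the traces $Tr_k$ are applied to the graph planar algebra $P^{\Gamma}$, which is not a subfactor planar algebra (its zero-box space $\ell^{\infty}(\Gamma_{\pm})$ is not one-dimensional, and the spherical axiom as stated does not even make sense when $P_{0,+}\neq P_{0,-}$ or when $\dim P_0 >1$). So the sphere-isotopy step is a genuine overreach: as written your proof would only establish the lemma for subfactor planar algebras, which is not enough for the paper's purposes.

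In fact the sphere is not needed at all, and the paper's proof is exactly the ``cleaner repackaging'' you sketch in your last paragraph. The tangle computing $Tr_k(A\wedge_k B)$ can be isotoped \emph{in the plane} (slide the $A$-box around the $B$-box, past the outer closure strings) to the tangle computing $Tr_k(B\wedge_k A)$, at the cost of cyclically rotating the attaching points of the $T_{m+n-2k}$-box by $2(m-k)$ positions; since the set of Temperley--Lieb diagrams on $2(m+n-2k)$ points is invariant under such rotations, the sum $T_{m+n-2k}$ is unchanged and the two expressions agree. Equivalently, one uses the cyclic-commutativity identity $\rho^{m-k}(A\wedge_k B)=B\wedge_k A$ together with $Tr_k\circ\rho=Tr_k$, both of which are pure planar-isotopy-plus-rotation-invariance facts and hold for arbitrary planar algebras. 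You should promote that closing remark to be the proof and drop the appeal to extremality; this matches the paper's one-line argument and gives the lemma in full generality.
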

\begin{proof}
This follows from the fact that the set of all TL diagrams on $2m$
points is invariant under rotations by $2\pi/m$.
\end{proof}
Before proceeding further, we consider an example. Let us assume that
$P$ is a subfactor planar algebra, so that in particular $P_{0,\pm}$
are one-dimensional and $Tr_{k}$ is scalar-valued. Let $\cup$ be
the following element of TL: 
$
\cup=\rnode{c}{\psframebox[framearc=0.4]{\vbox to0.8em{\vfill\hbox to2em{\hfill}}}}
\ncbar[arm=-0.6em,linearc=0.3em,offsetA=0.5em,offsetB=0.5em,angle=90]{c}{c}
\nput[labelsep=-0.2,offset=0.8em]{90}{c}{*}
$. Let
us denote by $\Phi$ the moment generating function of $\cup$. Thus
we let $\Phi(z)$ be the unique scalar defined by

\[
\Phi(z)=\sum_{n=0}^{\infty}Tr_{0}(\underbrace{\cup\wedge_{0}\cdots\wedge_{0}\cup}_{n\textrm{ times}})z^{n}.\]
We shall presently compute $\Phi(z)$ by using planar algebra methods. 

\begin{defn}
Let $T_{n}$ be the element of the planar algebra defined as the sum
of all the Temperley Lieb diagrams connecting the $2n$ boundary points, 
\end{defn}
\begin{lem}
\label{lem:MGFcup}\[
\Phi(z)=\frac{1-(\delta-1)z}{2z}\left(1-\sqrt{1-\frac{4z}{(1-(\delta-1)z)^{2}}}\right).\]

\end{lem}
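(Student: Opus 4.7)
The plan is to derive and solve a quadratic functional equation for $\Phi$. Let $a_n := Tr_0(\underbrace{\cup \wedge_0 \cdots \wedge_0 \cup}_{n\text{ times}})$, so that $\Phi(z) = \sum_{n \geq 0} a_n z^n$. First I would unfold the definitions: $\underbrace{\cup \wedge_0 \cdots \wedge_0 \cup}_{n}$ is the element of $P_n$ whose tangle realizes the non-crossing pair partition $\sigma := (1,2)(3,4)\cdots(2n-1,2n)$, and $Tr_0$ places $T_n = \sum_\tau \tau$ above it and closes the diagram. Each $\tau$ in $T_n$ then yields a closed sphere diagram whose value is $\delta^{L(\sigma,\tau)}$, where $L(\sigma,\tau)$ counts the loops in the union of $\sigma$ and $\tau$; hence
\[
a_n = \sum_{\tau \in NC_2(2n)} \delta^{L(\sigma,\tau)}.
\]

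Next I would extract a recursion by conditioning on the arc of $\tau$ through the point $1$, necessarily of the form $\tau(1) = 2k$ for some $k \in \{1, \ldots, n\}$. If $k = 1$, the arcs $(1,2)$ of $\sigma$ and $\tau$ close one loop and what remains on $\{3, \ldots, 2n\}$ is an identical copy of size $n-1$, contributing $\delta \cdot a_{n-1}$. If $k \geq 2$, the arc $(1, 2k)$ splits the disk into an outside on $\{2k+1, \ldots, 2n\}$ (contributing $a_{n-k}$) and an inside on $\{2, \ldots, 2k-1\}$, within which the three arcs $(1,2)_\sigma$, $(1,2k)_\tau$, $(2k-1, 2k)_\sigma$ merge into a single boundary strand connecting $2$ and $2k-1$. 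Writing $b_m$ for the analogous quantity on $2m$ relabeled points with the ``interior cups plus outer arc'' pattern $\sigma'_m := (2,3)(4,5)\cdots(2m-2, 2m-1)(1, 2m)$, and setting $b_0 := \delta$ to absorb the $k=1$ case, one gets
\[
a_n = \sum_{k=1}^n b_{k-1}\, a_{n-k}, \qquad n \geq 1.
\]

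The crux is the observation that $b_m = a_m$ for all $m \geq 1$: cyclic rotation of the $2m$ boundary labels by one position is a bijection on non-crossing pair partitions that carries $\sigma_m$ to $\sigma'_m$, and loop counts depend only on the unlabelled disk diagram and so are preserved. Setting $B(z) := \delta + \sum_{m \geq 1} a_m z^m = (\delta - 1) + \Phi(z)$, the recursion becomes $\Phi - 1 = z B \Phi$, which simplifies to the quadratic
\[
z \Phi^2 - \bigl(1 - (\delta - 1) z\bigr)\, \Phi + 1 = 0.
\]
Solving and choosing the branch with $\Phi(0) = 1$ gives
\[
\Phi(z) = \frac{(1 - (\delta-1)z) - \sqrt{(1-(\delta-1)z)^2 - 4z}}{2z},
\]
and factoring $1 - (\delta-1)z$ out of the square root puts this in the stated form. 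The main obstacle is the rotation identification $b_m = a_m$—without this cyclic symmetry the recursion involves two sequences and does not collapse to a single functional equation.
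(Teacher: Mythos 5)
Your proof is correct and follows essentially the same route as the paper's: both condition on where the TL diagram connects the first boundary point, both split the resulting sphere diagram into an inside and an outside contribution, and both invoke rotational invariance of the Temperley--Lieb element $T_m$ (your identification $b_m = a_m$) to collapse the two-sequence recursion to the single Catalan-type recursion $a_n = \delta\,a_{n-1} + \sum_{j=1}^{n-1} a_j a_{n-1-j}$, which is exactly the paper's $Tr_0(\cup^n) = (\delta-1)Tr_0(\cup^{n-1}) + \sum_{k=0}^{n-1}Tr_0(\cup^k)Tr_0(\cup^{n-k-1})$. The only cosmetic difference is that you phrase the argument in terms of non-crossing pair partitions and loop counts $\delta^{L(\sigma,\tau)}$ where the paper argues directly with pictures.
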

\begin{proof}
The trace of $\cup^{n}$ is given by the picture (corresponding to
$n=3$): \[
\ovalnode{Outer}{
\vbox to2.3em{\vfill}
\rnode{T}{\psframebox[framearc=0.4]{\quad T_n \quad}}
\ncbar[offsetA=-0.6,offsetB=0.3,angle=90,ncurv=1,linearc=0.1]{T}{T}
\ncbar[offsetA=0.15,offsetB=0.15,angle=90,ncurv=1,linearc=0.1]{T}{T}
\ncbar[offsetA=0.3,offsetB=-0.6,angle=90,ncurv=1,linearc=0.1]{T}{T}
}
\]
Group the TL diagrams in $T_{n}$ according to where the first boundary
point of $\cup^{n}$ is connected. Adding all those diagrams where
it is connected to its nearest neighbor we get $\delta Tr_{0}(\cup^{n-1})$.
Proceeding similarly we get, for $k=1,2,\ldots,n-1$, contributions of
the form:\[
\psframebox[framearc=0.5]{
	\vbox to5em{
		\vfill\hbox{$
		\psmatrix[colsep=0.6]&
		\rnode{f1}{}\ \  
		\rnode{T}{\psframebox[framearc=0.4]{\qquad T_k \qquad}}
		\ncbar[offsetA=-0.6,offsetB=0.3,angle=90,ncurv=1,linearc=0.1]{T}{T}
		\ncbar[offsetA=0.15,offsetB=0.15,angle=90,ncurv=1,linearc=0.1]{T}{T}
		\ncbar[offsetA=0.3,offsetB=-0.6,angle=90,ncurv=1,linearc=0.1]{T}{T}
		%\ncloop[loopsize=0.6,angleA=180,angleB=0,arm=0.2,linearc=0.2]{T}{T}
		\ \ \rnode{f2}{}&
		\rnode{Tn}{\psframebox[framearc=0.4]{T_{n-k-1}}}
		\ncbar[offsetA=-0.6,offsetB=0.3,angle=90,ncurv=1,linearc=0.1]{Tn}{Tn}
		\ncbar[offsetA=0.15,offsetB=0.15,angle=90,ncurv=1,linearc=0.1]{Tn}{Tn}
		\ncbar[offsetA=0.3,offsetB=-0.6,angle=90,ncurv=1,linearc=0.1]{Tn}{Tn}&
				\ncbar[angle=-90,linearc=0.2]{f1}{f2}
				\ncbar[offsetA=0.9,angle=90,linearc=0.2]{T}{f1}
				\ncbar[offsetA=-0.9,angle=90,linearc=0.2]{T}{f2}
		\endpsmatrix$}
		\vfill
	}
}
\]

If the first term in the picture is rotated by one we may use the
rotational invariance of $T_{k}$ to see that it is just $Tr_{0}(\cup^{k})$.
Thus we have, for each $n>0$, \[
Tr_{0}(\cup^{n})=(\delta-1)Tr_{0}(\cup^{n-1})+\sum_{k=0}^{n-1}Tr_{0}(\cup^{k})Tr_{0}(\cup^{n-k-1})\]
 Multiplying both sides by $z^{n}$ and summing from $n=1$ to $\infty$
we see that \[
\Phi-1=z(\delta-1)\Phi+z\Phi^{2}\]
 Solving the quadratic equation and checking the first term to get
the right solution we obtain our answer.
\end{proof}
The function $\Phi$ in Lemma \ref{lem:MGFcup} is that of a free
Poisson random variable having $R$-transform $\delta(1-z)^{-1}$
(see \cite[p. 311]{dvv:lectures}). We shall give an alternative computation
using free convolution later in the paper (see Lemma \ref{lem:NoAtoms}). 

The following lemma can be easily proved by drawing the appropriate
pictures:

\begin{lem}
The obvious linear embedding of $P_{k}$ into $Gr_{k}P=P_{k}\oplus P_{k+1}\oplus\cdots$
is an algebra $*$-homomorphism from $P_{k}$ endowed with its usual
$*$-algebra structure of Remark \ref{rem:UsualOps}, to $Gr_{k}P$
taken with multiplication $\wedge_{k}$ and conjugation $\dagger$
as in Definition \ref{def:Grk}. Moreover, this embedding carries
the trace $Tr_{k}$ to the usual trace $Tr$ on $P_{k}$.
\end{lem}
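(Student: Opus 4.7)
The plan is to verify each of the three claims---multiplicativity, compatibility of involutions, and agreement of traces---purely by comparing tangles, invoking only the isotopy invariance of $Z_T$ from axiom (1) of Definition \ref{def:planarAlg}. In every case, I would write down the tangle from Definition \ref{def:Grk} or Definition \ref{def:Trk} that defines the operation on $Gr_k P$, specialize the inputs to lie in the degree-zero summand $P_k$ (so each input disc carries exactly $2k$ boundary points, the minimum allowed), and check that the resulting fully-decorated tangle is isotopic to the tangle that defines the corresponding usual operation on $P_k$ recalled in Remark \ref{rem:UsualOps}.

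For multiplicativity, I take $a,b\in P_k$ and draw the defining tangle of $a\wedge_k b\in P_{k+k-k}=P_k$: the two input discs are joined by $k$ interior strings, and from each input a further $k$ strings run out to opposite sides of the outer disc. Since $A$ and $B$ have only $2k$ boundary points apiece, every endpoint is accounted for and no extra strands appear. An orientation-preserving isotopy that rotates $A$ and $B$ so as to stack them vertically produces exactly the usual multiplication tangle of Remark \ref{rem:UsualOps}, so $a\wedge_k b=ab$. For the involution, the defining tangle of $a^\dagger$ for $a\in P_k$ consists of $\phi(A)$ with its $2k$ endpoints running straight to the $2k$ boundary points of the outer disc; no internal caps arise since $a$ sits at minimum degree. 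This is the standard reflection tangle used in the usual $*$-operation, so $a^\dagger=a^*$.

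For the trace, take $x\in P_k$, so that in Definition \ref{def:Trk} one has $n=m-k=0$. Then $T_0$ has no boundary points and is the unique (empty) Temperley--Lieb diagram, identified with the unit of $P_{0,+}\cong\mathbb{C}$. The defining tangle of $Tr_k(x)$ therefore collapses to $x$ alone, with its $2k$ boundary points paired by $k$ strands wrapping around the outside of $x$; this is precisely the usual trace tangle of Remark \ref{rem:UsualOps}, so $Tr_k(x)=Tr(x)$.

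The only real subtlety throughout is bookkeeping: one must check that the cyclic orderings of the $2k$ boundary points of each disc, together with the placements of the marked intervals $*$, line up correctly after the isotopies invoked in each case, so that for instance the stacked picture for $a\wedge_k b$ matches the usual product and not a rotated version of it. This pictorial check is the most likely source of confusion and is the single step I would draw out in full detail; however, no conceptual obstacle is present, which is presumably why the authors dispatch the lemma in one sentence.
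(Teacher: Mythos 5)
Your proposal is correct and follows exactly the approach the paper intends: the paper's entire proof is the one-line remark that the lemma ``can be easily proved by drawing the appropriate pictures,'' and you have supplied precisely the pictorial verifications --- specializing the $\wedge_k$, $\dagger$, and $Tr_k$ tangles to degree-$k$ inputs and matching them, by isotopy, with the usual multiplication, involution, and trace tangles of Remark~\ref{rem:UsualOps} --- together with a fair warning about the only real subtlety (tracking the cyclic order of boundary points and the marked intervals through the isotopies).
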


\subsection{The planar algebra of a bipartite graph.\label{sub:graphPlanar}}

Let $\Gamma=\Gamma_{+}\cup\Gamma_{-}$ be a (locally finite) bipartite
graph with adjacency matrix $A_{\Gamma}$ possessing an eigenvector
$\mu=\mu_{v}$ ($v$ being a vertex of $\Gamma$) with $\mu_{v}>0$
for all $v$ and $A_{\Gamma}\mu=\delta\mu$. Note that although $\mu$
may be unbounded as a function of $\Gamma$, the ratios $\mu(v)/\mu(v')$
where $v$ and $v'$ are adjacent, are bounded by the eigenvector
condition.

We shall denote by $E$ the set of oriented edges of $\Gamma$, taken
with all possible orientations. Thus $E=E_{+}\cup E_{-}$ where $E_{+}$
consists of all edges of $\Gamma$ oriented so as to start at a vertex
in $\Gamma_{+}$ and end at a vertex in $\Gamma_{-}$, and $E_{-}$
will consists of all edges of $\Gamma$ oriented so as to start in
$\Gamma_{-}$ and end in $\Gamma_{+}$. For $e\in E$ we'll denote
by $e^{o}$ the edge with the opposite orientation. 

In \cite{jones:graphPlanarAlg} a planar algebra was associated with
the above data with the property that closed strings may be removed
multiplicatively as in remark \ref{delta}. We quickly redo this planar
algebra with a slightly different (but isomorphic) structure, emphasizing
those elements that arise when $\Gamma$ is infinite.

With $\Gamma,\mu$ as above we will define the planar algebra $P^{\Gamma}={P}_{n,\pm}^{\Gamma}$
where $P_{n,\pm}^{\Gamma}$ is the vector space of bounded functions
on loops on $\Gamma$ of length $2n$ starting and ending in $\Gamma_{+}$
for the plus sign and $\Gamma_{-}$ for the minus sign.

\begin{defn}
(Spin State) Given a planar tangle $T$, and a bipartite graph $\Gamma$
as above a \emph{spin state} $\sigma$ will be a function from the
regions of $T$ to the vertices of $\Gamma$, shaded regions being
mapped to $\Gamma_{+}$ and unshaded ones to $\Gamma_{-}$, together
with a function from the strings of $T$ to the edges of $\Gamma$
such that if a string $S$ is part of the boundary of the regions
$R_{1}$ and $R_{2}$ then $\sigma(S)$ is an edge connecting $\sigma(R_{1})$
and $\sigma(R_{2})$. 
\end{defn}
Note that a state $\sigma$ determines a function $\ell_{\sigma}:{\cal D}_{T}\cup\{\mbox{boundary disc}\}\rightarrow\{\mbox{loops on }\Gamma\}$
in the obvious way-if we follow a disc of $T$ around clockwise, the
intervals, beginning at the distinguished one, touch regions of $T$
to which $\sigma$ has assigned vertices of $\Gamma$ and the strings
connected to the marked boundary points of a disc $D$ have been assigned
edges of $\Gamma$ connecting the vertices on either side. We will
call $\ell_{\sigma}(D)$ the loop \emph{induced} on $D$ by $\sigma$.

\begin{defn}
(The curvature factor of a spin state.) Given a tangle and a spin
state $\sigma$ as above, define the curvature factor $c(\sigma)$
as follows. First isotope the tangle so that all discs are horizontal
rectangles (with the first boundary interval on the left as in remark
\ref{rectangles}) and all marked points are on the top edges of the
rectangles. Arrange also for all singularities of the y coordinate
on the strings to be generic (maxima or minima). Near such a maximum
(resp. minimum) we see regions above and below, one of which is convex,
labeled by adjacent (on $\Gamma$) vertices $v_{\textrm{convex}}$
and $v_{\textrm{concave}}$ according to $\sigma$. Assign the number
${\displaystyle \sqrt{\frac{\mu(v_{\textrm{convex}})}{\mu(v_{\textrm{concave}})}}}$
to this singularity. Then the curvature factor is\[
c(\sigma)=\textrm{product over all maxima and minima of }{\displaystyle \sqrt{\frac{\mu(v_{\textrm{convex}})}{\mu(v_{\textrm{concave}})}}}.\]

\begin{defn}
(The planar algebra of a bipartite graph.) We now define the action
of a planar tangle $T$ on ${\cal P}^{\Gamma}$. We are given a function
$R:{\cal D}_{T}\rightarrow$ functions on \{loops on $\Gamma$\} and
we have to define a function on loops appropriate to the boundary
of $T$, in a multilinear way.

So given a loop $\gamma$ appropriate to the boundary, define \[
Z_{T}(R)(\gamma)=\sum_{\sigma}\left\{ \prod_{D\in{\cal D}_{T}}R(D)(\ell_{\sigma}(D))\right\} c(\sigma)\]
 where the sum runs over all $\sigma$ which induce $\gamma$ on the
boundary of $T$. 
\end{defn}
\end{defn}
The main thing to note in this definition is that the sum is finite
since there are only a finite number of states inducing $\gamma$
on the boundary, and it defines a bounded function since all the $R$
are bounded and so is the factor $c(\sigma)$.

We leave it as an exercise to show that this definition of $Z_{T}$
is compatible with the gluing of tangles and the {*}-structure where
the {*} of a loop is that loop read backwards. Also that the eigenvector
property of $\mu$ guarantees that contractible closed strings in
tangles can be removed with a multiplicative factor of $\delta$.
Also that this planar algebra structure is isomorphic to that of \cite{jones:planar},
the only change being in how tangles are isotoped in order to define
the factor $c(\sigma)$. The reason for the change is that we are
mostly dealing with the \emph{graded} algebra for which the isotopy
we use is the most natural.

Each of the vector spaces $P_{n,\pm}^{\Gamma}$ is infinite dimensional
if $\Gamma$ is infinite. Moreover $P_{0,\pm}^{\Gamma}$ are the abelian
von Neumann algebras $\ell^{\infty}(\Gamma_{\pm})$ which act on the
$P_{n,\pm}^{\Gamma}$. (Note that the graded product and the usual
product are the same on these subalgebras). The trace tangle when
applied to any element of the planar algebra $P_{0,\pm}^{\Gamma}$
produces an element of $\ell^{\infty}(\Gamma_{\pm})$. We thus get
a bilinear conditional expectation $\mathcal{E}$ from $P_{0,\pm}^{\Gamma}$
(taken with its usual product) onto $\ell^{\infty}(\Gamma_{\pm})$. 

The inner product tangles of definition \ref{subfactorplanar} thus
become $\ell^{\infty}(\Gamma_{\pm})$-valued inner products, satisfying
$\langle a,b\rangle=\mathcal{E}(a^{*}b)$. It will follow from a representation
of the graph planar algebra on a Hilbert space that the conditional
expectation $\mathcal{E}$ (and thus the inner product) is non-negative
definite.

\subsubsection{Representing the planar algebra of a bipartite graph as loops.}

In the next few sections, we shall work out several examples, which
make explicit the operations of graded multiplication on $P^{\Gamma}$,
and which will be useful in the rest of the paper. All of the facts
mentioned below are straightforward consequences of the definition
of the graph planar algebra.

We will sometimes use the word {}``loop'' to also mean the planar
algebra element given by the delta function on the set of all loops
supported on the given loop.

As a matter of convenience, when inserting a loop into an internal
disc of a tangle we will line up the edges of the loop with the boundary
points of the disc, starting with the one first in clockwise order
after $*$. This convention is useful, since given a string meeting
the disc in question at a certain boundary point, any state $\sigma$
which has a nonzero contribution to the sum $Z_{T}$ of will have
to assign the edge of this boundary point to that string.

For an edge $e$ we'll write $s(e)$ for its starting vertex and $t(e)$
for its ending vertex. For a vertex $v$ we'll write $\Gamma_{+}(v)$
for the set of all edges starting at $v$ (i.e., $\Gamma_{+}(v)=\{e:s(e)=v\}$),
and we'll denote by $\Gamma_{-}(v)$ the set of all edges that end
at $v$. We'll also use the notation\[
\sigma(e)=\left[\frac{\mu(t(e))}{\mu(s(e))}\right]^{1/2}.\]

Let $L_{k}^{+}$ be the set of all loops of length $2k$ starting
at an even vertex, and $L_{k}^{-}$ be the set of all loops of length
$2k$ starting at an odd vertex. 

From now on, fix an integer $t$ and consider the algebra $Gr_{t}P^{\Gamma}$
with its graded multiplication $\wedge_{t}$. 

Let $a\in L_{k}^{+}$ be a loop, \[
a = e_{t+1}\cdots e_{k} f_k^{o}\cdots f_1^{o} e_1 \cdots e_{t}
\] where $e_j$ and $f_j$ are edges of $\Gamma$.  Let \[
b = e_{t+1}'\cdots e_{k}' {f'_k}^{o} \cdots {f'_1}^{o} e_1'\cdots e_t'\]
Then the graded product $a \wedge_t b$ is given by:\[
\begin{array}{c}\rnode{o}{\psframebox[framearc=0.4]{
\vbox to4em{\vfill}
\rnode{a}{\psframebox[framearc=0.4] {\vbox to2em{\hbox to 20em{\hfill}\vfill}}}\ 
\nput[offset=9em,labelsep=-0.5]{90}{a}{e_1}
\nput[offset=7.5em,labelsep=-0.4]{90}{a}{\cdots}
\nput[offset=7.5em,labelsep=0.4]{90}{a}{\cdots}
\nput[offset=6em,labelsep=-0.5]{90}{a}{e_t}
\nput[offset=4.8em,labelsep=-0.25]{90}{a}{*}
\nput[offset=4.5em,labelsep=0.5]{90}{a}{*}
\nput[offset=3em,labelsep=-0.5]{90}{a}{e_{t+1}}
\nput[offset=1.5em,labelsep=-0.4]{90}{a}{\cdots}
\nput[offset=1.5em,labelsep=0.4]{90}{a}{\cdots}
\nput[offset=0em,labelsep=-0.5]{90}{a}{e_{k}}
\nput[offset=-1.5em,labelsep=-0.5]{90}{a}{f_{k}^{o}}
\nput[offset=-3em,labelsep=-0.4]{90}{a}{\cdots}
\nput[offset=-3em,labelsep=0.4]{90}{a}{\cdots}
\nput[offset=-4.5em,labelsep=-0.5]{90}{a}{f_{t+1}^{o}}
\nput[offset=-6em,labelsep=-0.5]{90}{a}{f_{t}^{o}}
\nput[offset=-7.5em,labelsep=-0.4]{90}{a}{\cdots}
\nput[offset=-7.5em,labelsep=0]{90}{a}{\cdots}
\nput[offset=-9em,labelsep=-0.5]{90}{a}{f_{1}^{o}}
\ncdiag[arm=0,offsetA=9em,offsetB=-19.5em,angle=90]{a}{o}
\ncdiag[arm=0,offsetA=6em,offsetB=-16.5em,angle=90]{a}{o}
\ncdiag[arm=0,offsetA=3em,offsetB=-13.5em,angle=90]{a}{o}
\ncdiag[arm=0,offsetA=0em,offsetB=-10.5em,angle=90]{a}{o}
\ncdiag[arm=0,offsetA=-1.5em,offsetB=-9em,angle=90]{a}{o}
\ncdiag[arm=0,offsetA=-4.5em,offsetB=-6em,angle=90]{a}{o}
\rnode{b}{\psframebox[framearc=0.4] {\vbox to2em{\hbox to 20em{\hfill}\vfill}}
\nput[offset=9em,labelsep=-0.5]{90}{b}{e'_1}
\nput[offset=7.5em,labelsep=-0.4]{90}{b}{\cdots}
\nput[offset=7.5em,labelsep=0]{90}{b}{\cdots}
\nput[offset=6em,labelsep=-0.5]{90}{b}{e'_t}
\nput[offset=4.8em,labelsep=-0.25]{90}{b}{*}
\nput[offset=3em,labelsep=-0.5]{90}{b}{e'_{t+1}}
\nput[offset=1.5em,labelsep=-0.4]{90}{b}{\cdots}
\nput[offset=1.5em,labelsep=0.4]{90}{b}{\cdots}
\nput[offset=0em,labelsep=-0.5]{90}{b}{e'_{k}}
\nput[offset=-1.5em,labelsep=-0.5]{90}{b}{{f'_{k}}^{o}}
\nput[offset=-3em,labelsep=-0.4]{90}{b}{\cdots}
\nput[offset=-3em,labelsep=0.4]{90}{b}{\cdots}
\nput[offset=-4.5em,labelsep=-0.5]{90}{b}{{f'_{t+1}}^{o}}
\nput[offset=-6em,labelsep=-0.5]{90}{b}{{f'_{t}}^{o}}
\nput[offset=-7.5em,labelsep=-0.4]{90}{b}{\cdots}
\nput[offset=-7.5em,labelsep=0.4]{90}{b}{\cdots}
\nput[offset=-9em,labelsep=-0.5]{90}{b}{{f'_{1}}^{o}}
\ncdiag[arm=0,offsetA=-9em,offsetB=19.5em,angle=90]{b}{o}
\ncdiag[arm=0,offsetA=-6em,offsetB=16.5em,angle=90]{b}{o}
\ncdiag[arm=0,offsetA=-4.5em,offsetB=15em,angle=90]{b}{o}
\ncdiag[arm=0,offsetA=-0em,offsetB=10.5em,angle=90]{b}{o}
\ncdiag[arm=0,offsetA=-1.5em,offsetB=12em,angle=90]{b}{o}
\ncdiag[arm=0,offsetA=3.0em,offsetB=7.5em,angle=90]{b}{o}
\ncbar[linearc=0.2,arm=0.5,offsetA=-6em,offsetB=-6em,angle=90]{a}{b}
\ncbar[linearc=0.2,arm=0.2,offsetA=-9em,offsetB=-9em,angle=90]{a}{b}
}}}\end{array}
\]
which translates into the following formula:\begin{eqnarray*}
a\wedge_t b & = & \delta_{s(f_{1})=s(e_{1}')}\prod_{j=1}^{t}\delta_{f_{j}=e_{j}'}\left[\frac{\mu(s(e_{j}'))}{\mu(t(e_{j}'))}\right]^{1/2}\cdot\\
 &  & \cdot e_{t+1}\cdots e_{k}f_{k}^{o}\cdots f_{t+1}^{o}\ e_{t+1}'\cdots e_{k'}'f_{k'}'^{o}\cdots f_{1}'^{o}\cdots e_1 \cdots e_t .\end{eqnarray*}  
 
 Apart from the Perron-Frobenious factors, $a\wedge_t b$ corresponds to a kind of amalgamated concatenation of paths, although the edges of the path are should be cyclically permuted.  If for a path $a\in L_k^{\pm}$ (parity according to $t$)  we denote by $D_t(a)$
the path that starts at the $t+1$-th edge of $a$, then we have:\[
D_t(a)\wedge_t D_t(b) = \textrm{const} D_t(c)\]
where $c$ is zero if the last $t$ edges of $a$ do not form the inverse of the path formed by the first $t$ segments of $b$, and is the concatenation of $a$ (with last $t$ segments removed) and $b$ (with first $t$ segments removed) otherwise.

In particular, if $t=0$, given two paths $a$, $b$ in $L_k^+$ the graded multiplication $\wedge_{0}$
is just concatenation of paths (note that in this case $D_t$ is the identity map). 

The (usual) trace $Tr$ is given by\[
Tr(e_{1}\cdots e_{k}f_{k}^{o}\cdots f_{1}^{o})=\prod_{j}\delta_{e_{j}=f_{j}}\sigma(e_{j})s(e)\](where again the infinite sums are locally finite).

\subsubsection{$TL \subset Gr_0 P^\Gamma$.}
Let us now set $t=0$ and identify in terms of paths
 the element of $TL(k)\subset P_{k}^{\Gamma}\subset Gr_{0}P^{\Gamma}$
corresponding to any TL picture. Suppose that we are given a box $B$
with $2k$ boundary points (arranged so that all boundary points are
at the top and $*$ is at position $0$ from the top-left). Assume
also that there are $k$ non-crossing curves inside $B$ which connect
pairs of boundary points together. Let $\pi$ be the associated non-crossing
pairing. The associated element of the planar algebra is the function
$w_{B}$ on loops, defined on a loop $a$ by:\[
w_{B}(a)=\begin{cases}
\sigma(e_{1})\cdots\sigma(e_{n}) & \textrm{if }e_{i}=e_{j}^{o}\textrm{ whenever }i\stackrel{\pi}{\sim} j,\ i\neq j,\\
0 & \textrm{otherwise.}\end{cases}\]
If $\pi=\{i_{1},j_{1}\}\cup\cdots\cup\{i_{k},j_{k}\}$ where $i_{1}<i_{2}<\cdots$
and $i_{p}<j_{p}$, then one can think of $w_{B}$ as the following
locally finite sum of delta functions:\[
w_{B}=\sum_{e_{1}\cdots e_{2k}\in L_{k}^{+}}\left\{ \prod\delta_{e_{i_{p}}=e_{j_{p}}^{o}}\sigma(e_{i_{p}})\right\} (e_{1}\cdots e_{2k}).\]
An example of $w_{B}\in Gr_{0}P^{\Gamma}$ associated to the pairing
$\{1,4\},\{2,3\},\{5,12\},\{6,9\},\{7,8\},\{9,10\}$ (thus $k=6$
and $t=0$) is presented below:\[
w_{B}=\sum_{\parbox{4.5cm}{\begin{center} \tiny $e_{1},\ldots,e_{6}:$\\$
e_{1}e_{2}e_{2}^{o}e_{1}^{o}e_{3}e_{4}e_{5}e_{5}^{o}e_{6}e_{6}^{o}e_{3}^{o}\in L_{5}^{+}$\end{center}}}
\sigma(e_{1})\cdots\sigma(e_{6})
\begin{array}{c}\rnode{x}{\psframebox[framearc=0.4]{\vbox to3.5em{\vfill\hbox to 19em{\hfill}}}}
\nput[offset=8.25em,labelsep=-0.35]{90}{x}{e_1}
\nput[offset=6.75em,labelsep=-0.35]{90}{x}{e_2}
\nput[offset=5.25em,labelsep=-0.35]{90}{x}{e_2^o}
\nput[offset=3.75em,labelsep=-0.35]{90}{x}{e_1^o}
\nput[offset=2.25em,labelsep=-0.35]{90}{x}{e_3}
\nput[offset=9em,labelsep=-0.25]{90}{x}{*}
\nput[offset=0.75em,labelsep=-0.35]{90}{x}{e_4}
\nput[offset=-0.75em,labelsep=-0.35]{90}{x}{e_5}
\nput[offset=-2.25em,labelsep=-0.35]{90}{x}{e_5^o}
\nput[offset=-3.75em,labelsep=-0.35]{90}{x}{e_4^o}
\nput[offset=-5.25em,labelsep=-0.35]{90}{x}{e_6}
\nput[offset=-6.75em,labelsep=-0.35]{90}{x}{e_6^o}
\nput[offset=-8.25em,labelsep=-0.35]{90}{x}{e_3^o}
\ncbar[linestyle=dotted,nodesep=-0.4,arm=-0.4,linearc=0.1,angle=90,offsetA=8.25em,offsetB=-3.75em]{x}{x}
\ncbar[linestyle=dotted,nodesep=-0.4,arm=-0.2,linearc=0.1,angle=90,offsetA=6.75em,offsetB=-5.25em]{x}{x}
\ncbar[linestyle=dotted,nodesep=-0.4,arm=-0.6,linearc=0.1,angle=90,offsetA=2.25em,offsetB=8.25em]{x}{x}
\ncbar[linestyle=dotted,nodesep=-0.4,arm=-0.4,linearc=0.1,angle=90,offsetA=0.75em,offsetB=3.75em]{x}{x}
\ncbar[linestyle=dotted,nodesep=-0.4,arm=-0.2,linearc=0.1,angle=90,offsetA=-0.75em,offsetB=2.25em]{x}{x}
\ncbar[linestyle=dotted,nodesep=-0.4,arm=-0.2,linearc=0.1,angle=90,offsetA=-5.25em,offsetB=6.75em]{x}{x}
\end{array}
\]
(The dotted lines are for illustration purposes only and are not part of the planar diagram). 
In this way, given a $TL(k)$ element $B$ we get an associated element
$w_{B}\in P_{k,+}^{\Gamma}\in Gr_{k}P^{\Gamma}$. This embedding is
the canonical inclusion of the Temperley-Lieb planar algebra into
$P^{\Gamma}$.

\subsubsection{The center-valued trace $Tr_{0}$ on $Gr_{0}P^{\Gamma}$.}

As before, we denote by $T_{k}$ the element\[
T_{k}=\sum_{B\in TL(k)}w_{B}\]
obtained by summing over all $TL(k)$ diagrams.

Let $P_{0,\pm}$ be the zero-box space, i.e., as a linear space it
is $\ell^{\infty}(\Gamma_{\pm})$. The algebras $P_{0,\pm}^{\Gamma}$,
when considered with the graded multiplication $\wedge_{0}$, are
abelian, and are in the center of $Gr_{0}P^{\Gamma}$. Recall that
$\mathcal{E}:P_{n}^{\Gamma}\to P_{0}^{\Gamma}$ is a $P_{0}^{\Gamma}$-bilinear
map determined by $\mathcal{E}(ab^{*})=\langle a,b\rangle$; one can
check that $\mathcal{E}(v)=\mu(v)v$, where $v$ denotes the delta
function at $\Gamma_{\pm}$.

The center-valued trace $Tr_{0}:Gr_{0}P^{\Gamma}\to P_{0}^{\Gamma}$
is given by the equation \[
Tr_{0}(x)=\langle x,T_{k}\rangle=\mathcal{E}(x\cdot T_{k}),\qquad v\in V^{+},x\in P_{k}^{\Gamma}.\]
Here as before $T_{k}$ is the sum of all $TL$ diagrams.

\begin{lem}
\label{lemma:Tr0Formula}Let $v\in\Gamma$ and let $\phi_{v}:Gr_{0}P^{\Gamma}\to\mathbb{C}$
be defined by $\phi_{v}(x)v=Tr_{0}(x)\wedge_{0}v$ (i.e., the value of $Tr_0(x)$, viewed as a function on 
$\Gamma$). Let $x=e_{1}\cdots e_{2k}\in L_{k}^{+}$
be a loop. Then if $x$ starts at $v$,\[
\phi_{v}(x)=\sum_{\pi\in NCP(2k)}\prod_{\{i,j\}\subset\pi}\sigma(e_{i})\delta_{e_{i}=e_{j}^{o}},\]
where the sum is over all non-crossing pairings of $2k$ integers
and the product is taken over all tuples $\{i,j\}$, $i<j$ which
are paired by $\pi$. If $x$ does not start at $v$, $\phi_{v}(x)=0$.

Furthermore, $\phi_{v}$ is uniquely determined by the recursive formula\[
\phi_{v}(x)=\sum_{x=ex_{1}e^{o}x_{2}}\sigma(e)\phi_{t(e)}(x_{1})\phi_{v}(x_{2})\]
and the formula $\phi_{v}(ef^{o})=\delta_{e=f}\ \delta_{s(e)=v}\sigma(e).$
\end{lem}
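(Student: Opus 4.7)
The plan is to derive the explicit non-crossing-pairing formula first by directly unpacking the definition of $Tr_0$, then obtain the recursion as a corollary and note uniqueness by induction. Expanding $T_k=\sum_{B\in TL(k)} w_B$ and using $Tr_0(x)=\mathcal{E}(x\cdot T_k)$, each diagram $B$ corresponds to a non-crossing pairing $\pi$ of $\{1,\dots,2k\}$ drawn as cups above the box containing the loop $x=e_1\cdots e_{2k}$. Evaluating the resulting closed tangle through the spin-state sum, the loop $x$ fixes all edges at the boundary, so the string representing the cup $\{i,j\}$ is forced to carry an edge with $e_i=e_j^o$; this kills the contribution unless this constraint holds for every pair of $\pi$. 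The region labels are then determined by adjacency to $x$, and the only remaining ingredient is the curvature factor $c(\sigma)$.

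The curvature analysis is the main work. Each cup contributes a single maximum, with $v_{\mathrm{convex}}=t(e_i)=s(e_j)$ (the region just inside the cup). Identifying $v_{\mathrm{concave}}$ uses the structural fact that in a NC pairing the children $\{k_1,l_1\},\dots,\{k_m,l_m\}$ of a pair $\{i',j'\}$ are consecutive: $k_1=i'+1$, $k_{c+1}=l_c+1$, and $l_m+1=j'$. Combined with $e_{k_c}=e_{l_c}^o$, this yields $s(e_{k_c})=t(e_{i'})$ for every child, so $v_{\mathrm{concave}}=s(e_i)$ for each non-root cup; for a root pair, the same argument starting from $s(e_1)=v$ gives $v_{\mathrm{concave}}=v=s(e_i)$. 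Each cup therefore contributes $\sqrt{\mu(t(e_i))/\mu(s(e_i))}=\sigma(e_i)$, and multiplying over the maxima gives the NC-pairing formula. When $x$ does not start at $v$, the outer region label $v$ is incompatible with the boundary loop so every spin state has zero contribution and $\phi_v(x)=0$.

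For the recursion, partition NC pairings of $\{1,\dots,2k\}$ by which position $j$ is paired with $1$: non-crossing forces $j$ to be even and splits $\pi\setminus\{\{1,j\}\}$ into independent NC pairings of $\{2,\dots,j-1\}$ and $\{j+1,\dots,2k\}$. The factor from $\{1,j\}$ is $\sigma(e_1)\delta_{e_1=e_j^o}$, and since $t(e_j)=s(e_1)=v$, the remaining sum factors as $\phi_{t(e_1)}(e_2\cdots e_{j-1})\,\phi_v(e_{j+1}\cdots e_{2k})$. Summing over $j$ gives the asserted recursion with $e=e_1$. The base case $\phi_v(ef^o)=\delta_{e=f}\delta_{s(e)=v}\sigma(e)$ is the $k=1$ instance of the NC formula, and uniqueness of $\phi_v$ given the recursion follows by induction on $k$. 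The principal obstacle is identifying $v_{\mathrm{concave}}$ at each maximum, since it depends on the global nesting pattern of $\pi$; the consecutiveness of children in a NC pairing is exactly what causes the Perron--Frobenius factors to telescope cleanly to $\prod_{\{i,j\}\in\pi}\sigma(e_i)$.
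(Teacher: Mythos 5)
Your proof is correct, but it takes a genuinely different route from the paper's. The paper never directly evaluates the curvature factor of the $Tr_0$ tangle for an arbitrary TL diagram; instead it sidesteps this by showing that both $\phi_v$ (defined via $Tr_0=\langle\,\cdot\,,T_k\rangle$) and the explicit NC-pairing functional $\phi_v'$ satisfy the same recursion (and the same base case), where the recursion for $\phi_v$ is obtained by decomposing $TL(2k)$ according to the string attached at position $1$ and using the factorization $w_{1B_1qB_2}=\sum_e\sigma(e)\,e\,w_{B_1}\,e^o\,w_{B_2}$. You instead compute the tangle value head-on: after isotoping so that the cups sit above the $x$-rectangle, you identify $v_{\mathrm{convex}}=t(e_i)$ at each maximum and argue, via the structural fact that the children of any pair in a non-crossing pairing occupy consecutive intervals, that $v_{\mathrm{concave}}=s(e_i)$ at every maximum (including top-level cups via $s(e_1)=v$); this is precisely the telescoping that makes the global nesting irrelevant and yields $\prod_{\{i,j\}\in\pi}\sigma(e_i)$. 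The recursion and uniqueness then follow from the standard NC-pairing split at the mate of $1$, which is the same combinatorial step the paper uses for $\phi'_v$. Your argument is more hands-on and self-contained (it does not rely on the precomputed form of $w_B$ and $\langle x,w_B\rangle$), at the cost of having to do the curvature bookkeeping yourself; the paper's is slicker because the inner-product formalism absorbs that bookkeeping into the definition of $w_B$. One point worth being explicit about in a final write-up: when $x$ does not start at $v$, the vanishing is forced because the region adjacent to the $*$ of $x$ is the outer region of the $Tr_0$ tangle and is labeled $s(e_1)$, so no spin state is compatible with assigning $v$ there — you gesture at this but should tie it directly to the shading/region adjacent to $*$.
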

We note that although the support of an element $a\in Gr_0 P^\Gamma$, viewed as a function on paths, may not be finite, the support of  $a\wedge_0 v$ is always finite, since this element is supported on paths of a fixed length starting and ending at $v$.  Thus the value of $\phi_v$ is well-defined.  Moreover, to know the value of $\phi_v$, it is sufficient to know its value on elements of $Gr_0 P^\Gamma$ that have finite support.
\begin{proof}
Clearly, the recursive formula gives rise to a uniquely defined linear
functional on all finitely-supported elements of $Gr_0 P^\Gamma$ (these elements are, of course, viewed as functions on paths). By the comments above, we shall therefore prove the lemma if we prove that both
the functional $\phi_{v}$ and the functional\[
\phi'_{v}(x)=\delta_{s(e_{1})=v}\sum_{\pi\in NCP(2k)}\prod_{\{i,j\}\subset\pi}\sigma(e_{i})\delta_{e_{i}=e_{j}^{o}}\]
satisfy this recursive relation.

Let $\pi\in NCP(2k)$. Then $1$ is paired with some integer $q$.
Thus $NCP(2k)=\sqcup_{q>1}NC\{2,\ldots,q-1\}\times NC\{q+1,\ldots,2k\}$.
Thus\begin{eqnarray*}
\phi_{v}'(x) & = &\delta_{s(e_{1})=v} \sum_{\pi\in NCP(2k)}\prod_{\{i,j\}\subset\pi}\sigma(e_{i})\delta_{e_{i}=e_{j}^{o}}\\ &=& \sum_{q>1}\sum_{
\parbox{3cm}{\tiny\begin{center}$\pi_{1}\in NCP\{1,\ldots,q-1\}$\\$ \pi_{2}\in NCP\{q+1,\ldots,2l\}$\end{center}}
}\delta_{e_{1}  =e_{q}^{o}}\sigma(e_{1})
 \prod_{\{i,j\}\subset\pi_{1}}\sigma(e_{i})\delta_{e_{i}=e_{j}^{o}}\prod_{\{i,j\}\subset\pi_{2}}\sigma(e_{i})\delta_{e_{i}=e_{j}^{o}}\\
 & = & \sum_{x=ex_{1}e^{o}x_{2}}\sigma(e)\phi'_{t(e)}(x_{1})\phi'_{v}(x_{2}).\end{eqnarray*}
Furthermore, $\phi_{v}'(ef^{o})$ is given by the claimed formula.
Thus $\phi_{v}'$ satisfies the recursive relation.

We now turn to showing that $\phi_{v}$ satisfies the same recursive
relation. Note that $\phi_{v}(x)=0$ unless $x$ starts at $v$. 

Note that if $x=e_{1}\cdots e_{2k}$ and $y=f_{1}\cdots f_{2k}$ then
$\langle x, y\rangle=0$ unless $ x=y^{o}$ (an opposite of a path
is a path with the order of edges and also all edges reversed). Furthermore,
if $x=y^{o}$, then\[
\langle x, x^o\rangle =s(e_1) \prod_{i=1}^{2k}\sigma(e_{i})\]

The set $TL$ of all Temperley-Lieb diagrams can be written as a union\[
TL(2k)=\sqcup_{q}TL\{2,\ldots,q-1\}\times TL\{q+1,\ldots,2k\}\]
in a manner similar to decomposing the partitions ($q$ denotes the
other endpoint of the string ending at $1$). Let us assume that
$x$ starts at $v$. Let us denote by $\rnode{a}{1}B_1\rnode{b}{q}B_2\ncbar[linewidth=0.025,angle=90,linearc=0.1,arm=0.1]{a}{b}$
the diagram in which $1$ is connected to $q$ and $B_{1}\in TL\{2,\ldots,q-1\}$,
$B_{2}\in TL\{q+1,\ldots,2k\}$. Then \begin{eqnarray*}
Tr_{0}(x) & = & \langle x, T_{k}\rangle =\sum_{B\in TL(2k)}\langle x, w_{B}\rangle \\
 & = & \sum_{q}\sum_{\parbox{3cm}{\tiny\begin{center}$B_{1}\in TL\{1,\ldots,q-1\}$\\ $B_{2}\in TL\{q+1,\ldots,2k\}$\end{center}}
 }\langle x, w_{\rnode{a}{1}B_1\rnode{b}{q}B_2\ncbar[linewidth=0.025,angle=90,linearc=0.1,arm=0.1]{a}{b}}\rangle.\end{eqnarray*}
Now, recall that\[
w_{B}=\sum_{f_{1}\ldots f_{2k}\in L_{k}^{+}}\left\{ \prod\delta_{e_{i_{p}}=e_{j_{p}}^{o}}\sigma(f_{i_{p}})\right\} \ f_{1}\cdots f_{2k},\]
so that\begin{eqnarray*}
w_{\rnode{a}{1}B_1\rnode{b}{q}B_2\ncbar[linewidth=0.025,angle=90,linearc=0.1,arm=0.1]{a}{b}}
& = & \sum_{f_{1}\ldots f_{q-1}e\ldots f_{2k-1}e^{o}\in L_{k}^{+}}\sigma(e)\prod_{
\parbox{2cm}{\tiny\begin{center}$\{i_{p},j_{p}\}\subset B_{1}$ \\ {or} $\{i_{p,}j_{p}\}\subset B_{2}$\end{center}}
}\delta_{f_{i_{p}}=f_{j_{p}}^{o}}\sigma(f_{i_{p}})
ef_{1}\cdots f_{q-1}e^{o}f_{q+1}\cdots f_{2k}\\
 & = & \sum_{e}\sigma(e)ew_{B_{1}}e^{o}w_{B_{2}}.\end{eqnarray*}
Moreover,\[
\langle x, w_{\rnode{a}{1}B_{1}\rnode{b}{q}{B_{2}}\ncbar[linewidth=0.025,angle=90,linearc=0.1,arm=0.1]{a}{b}}\rangle =0\]
 unless $x$ has the form $x=ex_{1}e^{o}x_{2}$ with $x_{1}$ a loop
having length $q-2$ and $e$ an edge.  In this case,\[
\langle x,w_{\rnode{a}{1}B_{1}\rnode{b}{q}{B_{2}}\ncbar[linewidth=0.025,angle=90,linearc=0.1,arm=0.1]{a}{b}}\rangle=\langle x_{1}, w_{B_{1}}\rangle \langle x_{2}, w_{B_{2}}\rangle  \sigma(e)\sigma(e^{o})\sigma(e) =\langle x_{1}, w_{B_{1}}\rangle \langle x_{2}, w_{B_{2}}\rangle  \sigma(e).\]
Lastly, if $v=s(e)$ then\[
Tr_{0}(ee^{o})=\langle e, e\rangle=\sigma(e)v.\]
It follows that $\phi_{v}$ satisfies the same recursive formula as
$\phi'_{v}$ and, in particular, $\phi_{v}=\phi_{v}'$.
\end{proof}

\subsubsection{Examples.}

Let us denote by $\cup$ the element $\cup=\sum_{ee^{o}\in L^{+}}\sigma(e)ee^{o}$.
Then\begin{eqnarray*}
Tr_{0}(\cup) & =\sum_{e\in E_{+}} & \sum_{f}\mathcal{E}(ee^{o}\cdot ff^{o})\sigma(e)\sigma(f)\\
 & = & \sum_{e}\mathcal{E}(ee^{o}\cdot ee^{o})\left[\frac{\mu(t(e))}{\mu(s(e))}\right]=\sum_{e}\left[\frac{\mu(t(e))}{\mu(s(e))}\right]s(e)\\
 & = & \sum_{v\in\Gamma_{+}}v\frac{1}{\mu(v)}\sum_{s(e)=v}\mu(t(e))=\sum_{v\in\Gamma_{+}}\delta v,\end{eqnarray*}
since $\sum_{s(e)=v}\mu(t(e))=\sum_{w}\Gamma_{vv}\mu(w)=\delta\mu(v)$.

\subsection{Planar subalgebras of $P^{\Gamma}$.}

It is not the planar algebras $P^{\Gamma}$ that are of real interest,
but some of their planar subalgebras. In particular those with finite
dimensional $P_{n}$ and 1-dimensional $P_{0,\pm}$ for which the
inner product is thus scalar valued and inherits positive definiteness
from $P^{\Gamma}$. 

The following theorem, which follows from Popa's work on the theory
of $\lambda$-lattices (see e.g. Theorem 2.9 (4) in \cite{shlyakht-popa:universal}),
shows that any subfactor planar algebra is a sub-planar algebra of
a planar algebra of a discrete bipartite graph.

\begin{thm}
Let $P$ be an (extremal) subfactor planar algebra, realized as the
$\lambda$-lattice $A_{ij}$ with principal graph $\Gamma$ and associated
Perron-Frobenius eigenvector $\mu$. Let $\mathcal{A}_{i}^{j}$ be
as in Theorem 2.9(4) in \cite{shlyakht-popa:universal}. Then:\\
(a) The graph planar algebra $P^{\Gamma}$ is the planar algebra of
the inclusion $\mathcal{A}_{-1}^{-1}\subset\mathcal{A}_{0}^{-1}$;
in other words, $(\mathcal{A}_{-1}^{-1})'\cap\mathcal{A}_{k}^{-1}=\mathcal{P}_{k}^{\Gamma}$;\\
(b) The isomorphism $P_{j,+}=A_{-1j}\cong(\mathcal{A}_{-1}^{0})'\cap\mathcal{A}_{j}^{-1}\subset(\mathcal{A}_{-1}^{-1})'\cap\mathcal{A}_{j}^{-1}$
gives rise to a planar algebra inclusion of $P$ into $P^{\Gamma}$.
\end{thm}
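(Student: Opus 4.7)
The plan is to deduce the theorem from two ingredients: Popa's $\lambda$-lattice construction (which realizes the abstract $\lambda$-lattice $A_{ij}$ as the standard invariant of a concrete iterated Jones tower of II$_1$ factors with principal graph $\Gamma$), and the classical path-model (string algebra) description of the standard invariant of an extremal subfactor in terms of loops on its principal graph. Once these identifications are in place, the remaining work is to match the planar algebra structure that results on the loop spaces with the particular $P^\Gamma$ defined in \S\ref{sub:graphPlanar}.

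For part (a), I would first invoke Popa's theorem (\cite{popa:standardlattice}, restated as Theorem 2.9 in \cite{shlyakht-popa:universal}) to obtain the tower $\mathcal{A}_{-1}^{-1}\subset\mathcal{A}_0^{-1}\subset\mathcal{A}_1^{-1}\subset\cdots$ of iterated basic constructions of the extremal subfactor $\mathcal{A}_{-1}^{-1}\subset\mathcal{A}_0^{-1}$, whose principal graph is $\Gamma$ with Perron--Frobenius data $(\delta,\mu)$, and whose standard invariant is the enlarged $\lambda$-lattice $A_{-1,k}^{-1}=(\mathcal{A}_{-1}^{-1})'\cap\mathcal{A}_k^{-1}$. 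Next I would invoke the path-model identification (Ocneanu's paragroup description, cf.\ \cite{jones:graphPlanarAlg}): as a vector space, $(\mathcal{A}_{-1}^{-1})'\cap\mathcal{A}_k^{-1}$ is canonically identified with bounded functions on loops of length $2k$ on $\Gamma$ based at the distinguished vertex in $\Gamma_+$ corresponding to $\mathcal{A}_{-1}^{-1}$, with the Jones projections $\mathbf{e}_j$, involution, conditional expectations, and partial traces all going over to the explicit loop-basis formulas written down in \S\ref{sub:graphPlanar}.

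For part (b), once (a) is in hand, Theorem 2.9(4) in \cite{shlyakht-popa:universal} gives $P_{j,+}=A_{-1,j}\cong(\mathcal{A}_{-1}^{0})'\cap\mathcal{A}_j^{-1}$, and the containment $\mathcal{A}_{-1}^{-1}\subset\mathcal{A}_{-1}^{0}$ yields the unital $*$-algebra inclusion $(\mathcal{A}_{-1}^{0})'\cap\mathcal{A}_j^{-1}\subset(\mathcal{A}_{-1}^{-1})'\cap\mathcal{A}_j^{-1}=P_k^{\Gamma}$. Since both planar-algebra structures (the intrinsic one on $P$ and the restriction from $P^\Gamma$) are computed from the \emph{same} ambient tower $\{\mathcal{A}_k^{-1}\}$ using the same Jones projections and conditional expectations---the only difference being which factor one takes the relative commutant with respect to---this inclusion is automatically a morphism of planar $*$-algebras.

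The hard part will be the consistency check hidden inside (a): one must verify that the curvature factor $c(\sigma)$ in our formula $Z_T(R)(\gamma)=\sum_\sigma\bigl\{\prod_D R(D)(\ell_\sigma(D))\bigr\}c(\sigma)$ reproduces exactly the powers of $\mu$ that appear when an arbitrary labelled tangle is cut into Temperley--Lieb generators, Jones projections, and traces inside the subfactor tower. Our definition uses an isotopy into horizontal rectangles (better adapted to the graded multiplications $\wedge_k$) rather than the one in \cite{jones:graphPlanarAlg}, so one has to check that the two $c(\sigma)$'s differ only by an overall isotopy-invariant factor that cancels across the equation, thereby yielding an isomorphism of planar algebras rather than a mere bijection of underlying vector spaces. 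Extremality (which is built into Popa's construction) is what makes the Perron--Frobenius normalizations come out symmetrically at the top and bottom of each tangle, and so is essential at this step.
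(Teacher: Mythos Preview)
The paper does not actually supply a proof of this theorem. It is stated as a consequence of Popa's work, with the single sentence ``The following theorem, which follows from Popa's work on the theory of $\lambda$-lattices (see e.g.\ Theorem 2.9 (4) in \cite{shlyakht-popa:universal})\ldots'' serving in lieu of an argument. What follows the theorem statement in the paper is not a proof but an explanatory description: the algebras $\mathcal{A}_{-1}^{-1}$ and $\mathcal{A}_0^{-1}$ are recalled as non-unital inductive limits of the $A_{ij}$, and a pictorial description of the inductive-limit inclusion $P_k\hookrightarrow P_{k+2}$ (adding a cap on the right) and of $\mathcal{A}_{-1}^{-1}$ (diagrams with a through-string on the left) is given.

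Your proposal is therefore not so much a comparison target as a fleshing-out of what the paper leaves implicit. The ingredients you list---Popa's realization of the $\lambda$-lattice, the path/string-algebra model for relative commutants, and the check that the curvature factor $c(\sigma)$ matches the Perron--Frobenius weights arising from the tower---are the right ones, and they are consistent with the paper's brief attribution. The one point worth flagging is that the paper's pictorial description of $\mathcal{A}_{-1}^{-1}\subset\mathcal{A}_0^{-1}$ via non-unital inductive limits is the concrete mechanism by which the principal graph $\Gamma$ enters, so your invocation of the path model should be tied to that specific construction rather than to a generic Ocneanu-style string algebra; otherwise one would still owe an identification of the two.
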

The algebras $\mathcal{A}_{-1}^{-1}$ and $\mathcal{A}_{0}^{-1}$
were constructed in \cite{shlyakht-popa:universal} as certain non-unital
inductive limits of the algebra $A_{ij}$. Pictorially, this construction
corresponds to e.g. taking $\mathcal{A}_{0}^{-1}$ to be the inductive
limit of the algebras $\{P_{k}:k\textrm{ even}\}$ using the non-unital
inclusion given by the following picture (the region containing $*$ is unshaded):\[
P_k \ni 
\begin{array}{c}
\rnode{outer}{
	\psframebox[framearc=0.4]{
		\vbox to3em {
			\vfill
			\hbox{$
				\rnode{x}{
					\psframebox[framearc=0.4]{
						\vbox to1em{
							\vfill
							\hbox to6em{
								\hfill $x$ \hfill
							}
							\vfill
						}
					}
				}
			$}			
			\vfill
		} 
	}
}
\ncdiag[angle=90,arm=0,offsetA=2.5em,offsetB=-2.5em]{x}{outer}
\ncdiag[angle=90,arm=0,offsetA=2.0em,offsetB=-2.0em]{x}{outer}
\ncdiag[angle=90,arm=0,offsetA=1.5em,offsetB=-1.5em]{x}{outer}
\nput[offset=-0em]{90}{x}{\cdots}
\ncdiag[angle=90,arm=0,offsetA=-2.0em,offsetB=2.0em]{x}{outer}
\ncdiag[angle=90,arm=0,offsetA=-2.5em,offsetB=2.5em]{x}{outer}
\nput[offset=3.1em,labelsep=0.1]{90}{x}{*}
\ncdiag[angle=-90,arm=0,offsetA=2.5em,offsetB=-2.5em]{x}{outer}
\ncdiag[angle=-90,arm=0,offsetA=-2.0em,offsetB=2.0em]{x}{outer}
\ncdiag[angle=-90,arm=0,offsetA=2.0em,offsetB=-2.0em]{x}{outer}
\nput[offset=-0em]{-90}{x}{\cdots}
\ncdiag[angle=-90,arm=0,offsetA=-1.5em,offsetB=1.5em]{x}{outer}
\ncdiag[angle=-90,arm=0,offsetA=-2.5em,offsetB=2.5em]{x}{outer}
\end{array}
\mapsto
\begin{array}{c}
\rnode{outer}{
	\psframebox[framearc=0.4]{
		\vbox to3em {
			\vfill
			\hbox{$
				\rnode{x}{
					\psframebox[framearc=0.4]{
						\vbox to1em{
							\vfill
							\hbox to6em{
								\hfill $x$ \hfill
							}
							\vfill
						}
					}
				}
			$\hskip 3em}			
			\vfill
		} 
	}
}
\ncdiag[angle=90,arm=0,offsetA=2.5em,offsetB=-4.0em]{x}{outer}
\ncdiag[angle=90,arm=0,offsetA=2.0em,offsetB=-3.5em]{x}{outer}
\ncdiag[angle=90,arm=0,offsetA=1.5em,offsetB=-3.0em]{x}{outer}
\nput[offset=-0.25em]{90}{x}{\cdots}
\ncdiag[angle=90,arm=0,offsetA=-2.0em,offsetB=0.5em]{x}{outer}
\ncdiag[angle=90,arm=0,offsetA=-2.5em,offsetB=1.0em]{x}{outer}
\nput[offset=3.1em,labelsep=0.1]{90}{x}{*}
\ncdiag[angle=-90,arm=0,offsetA=2.5em,offsetB=-1.0em]{x}{outer}
\ncdiag[angle=-90,arm=0,offsetA=2.0em,offsetB=-0.5em]{x}{outer}
\ncdiag[angle=-90,arm=0,offsetA=-1.5em,offsetB=3.0em]{x}{outer}
\nput[offset=-0.25em]{-90}{x}{\cdots}
\ncdiag[angle=-90,arm=0,offsetA=-2.0em,offsetB=3.5em]{x}{outer}
\ncdiag[angle=-90,arm=0,offsetA=-2.5em,offsetB=4.0em]{x}{outer}
\ncbar[angle=90,arm=-0.3,linearc=0.1,offsetB=2.0em,offsetA=-4.0em]{outer}{outer}
\ncbar[angle=-90,arm=-0.3,linearc=0.1,offsetA=2.0em,offsetB=-4.0em]{outer}{outer}
\end{array}
\in P_{k+2}
\]
The algebra $\mathcal{A}_{-1}^{-1}$ then consists of all diagrams
having a vertical through-string on the left (again, region containing $*$ is unshaded): \[
\mathcal{A}_{-1}^{-1}= \left\{
\begin{array}{c}
	\rnode{outer}{
		\psframebox[framearc=0.4]{
			\psmatrix[colsep=1em,rowsep=0.3]
			\\ 
			&\rnode{f}{}& \rnode{x}{
				\psframebox[framearc=0.4]{\hbox{
					\vbox to1em{
						\vfill
						\hbox to6em{
							\hfill $x$ \hfill
						}
						\vfill
					}
				}
			}}
			\\ \vbox{\vfill}
			\endpsmatrix
		}
	}
\end{array}
\ncdiag[arm=0,angle=90,linearc=0.2,offsetA=3em,offsetB=-2em]{x}{outer}
\ncdiag[arm=0,angle=90,linearc=0.2,offsetA=2em,offsetB=-1em]{x}{outer}
\ncdiag[arm=0,angle=90,linearc=0.2,offsetA=1em,offsetB=-0em]{x}{outer}
\ncdiag[arm=0,angle=90,linearc=0.2,offsetA=-2em,offsetB=3em]{x}{outer}
\ncdiag[arm=0,angle=90,linearc=0.2,offsetA=-3em,offsetB=4em]{x}{outer}
\ncdiag[arm=0,angle=-90,linearc=0.2,offsetA=3em,offsetB=-4em]{x}{outer}
\ncdiag[arm=0,angle=-90,linearc=0.2,offsetA=2em,offsetB=-3em]{x}{outer}
\ncdiag[arm=0,angle=-90,linearc=0.2,offsetA=-1em,offsetB=0em]{x}{outer}
\ncdiag[arm=0,angle=-90,linearc=0.2,offsetA=-2em,offsetB=1em]{x}{outer}
\ncdiag[arm=0,angle=-90,linearc=0.2,offsetA=-3em,offsetB=2em]{x}{outer}
\nput[offset=-0.5em]{90}{x}{\cdots}
\nput[offset=0.5em]{-90}{x}{\cdots}
\ncdiag[arm=0,angle=90,offsetA=3.5em,offsetB=0.25em]{outer}{f}
\ncdiag[arm=0,angle=-90,offsetA=-3.5em,offsetB=-0.25em]{outer}{f}
\nput[offset=4.1em,labelsep=-0.25]{90}{outer}{*}
\right\}
\]

\section{A random matrix model for $Tr_{0}$\label{sec:randomMatrixModel}}

\subsection{Random block matrices associated to the graph.}

Given $\Gamma$ as above, let $\Gamma_{\pm}$. Let $A=A^{+}\oplus A^{-}$
where $A^{\pm}=\ell^{\infty}(\Gamma_{\pm})=P_{0,\pm}^{\Gamma}$. We
shall denote by $E_{\pm}$ the set of edges of $\Gamma$ which are
positively or negatively oriented (according to the sign $\pm$).
We shall make the convention that together with any edge $e\in E_{\pm}$
there is also its opposite edge $e^{o}\in E_{\mp}$.

We endow $A$ with a (semi-finite) trace $tr$ given on the minimal
projections of $A$ by the formula\[
tr(\delta_{v})=\mu(v),\qquad v\in\Gamma.\]
Let $N,M$ be integers. For each choice of $M$ choose integers $\{M_{v}:v\in\Gamma f\}$
with the property that for each fixed vertex $v$, $M_{v}/M\to\mu(v)$
as $M\to\infty$. 

In the foregoing, we will consider (infinite if the graph $\Gamma$
is infinite) matrices whose entries are indexed by the set $\sqcup_{v\in\Gamma}\{1,\dots,N\}\times\{1,\ldots,M_{v}\}$.
Such an entry will be denoted $A_{ij\ mn\ vw}$, where $i,j\in\{1,\dots,N\}$,
$m\in\{1,\dots,M_{v}\}$, $n\in\{1,\ldots,M_{w}\}$ and $v,w\in\Gamma$.
Given such a matrix $A=(A_{ij\ mn\ vw})$, we compute its trace as
follows:\[
tr(A)=\sum_{v}\ \frac{1}{N}\sum_{1\leq i\leq N}\ \frac{1}{M}\sum_{1\leq n\leq M_{v}}A_{ii\ nn\ vv}.\]
Our matrices will be such that $A_{ij\ mn\ vw}=0$ unless $v,w$ belong
to a finite set, so that the sum above is finite. 

For $v\in\Gamma$ consider the diagonal matrix $d_{v}$ given by\[
(d_{v})_{ij\ mn\ uw}=\delta_{i=j}\delta_{m=n}\delta_{u=w=v}.\]
Note that the joint law of $(d_{v}:v\in\Gamma)$ converges as $M\to\infty$
to the joint law of $(\delta_{v}:v\in\Gamma)$.

Consider then for a positively oriented edge $e\in E_{+}$ from $v$
to $w$ the $NM_{v}\times NM_{w}$ matrix $X_{e}$ defined as follows.
The entry $X_{ij\ mn\ tu}^{e}$ is zero unless $t=v$ and $u=w$.
Otherwise, $X_{ij\ mn\ vw}^{e}$ is (up to scaling) a random Gaussian
matrix; in other words, the entries form a family of independent complex
Gaussian random variables, each of variance $(\mu(s(e))\mu(t(e)))^{-1/2}(NM)^{-1}$.
We shall moreover choose the matrices $X_{e}$ in such a way that
the entries of matrices corresponding to different positively oriented
edges are independent. Thus the variables $\{X_{ij\ mn\ vw}^{e}:e\in E_{+},v=s(e),w=t(e),1\leq i,j\leq N,1\leq m\leq M_{v},1\leq n\leq M_{w}\}$
are assumed to be independent.

For a negatively oriented edge $f$, set $X_{f}=X_{e^{o}}^{*}$. For
a loop $w\in L_{k}^{\pm},$$w=e_{1}\cdots e_{2k}$, set $X_{w}=X_{e_{1}}\cdots X_{e_{2k}}$.
Note that $w\mapsto X_{w}$ is a homomorphism from the algebra $(P^{\Gamma},\wedge_{0})$
to the algebra of random matrices.

\subsection{$Tr_{0}$ via random matrices.}

\begin{prop}
Let $E$ denote the expected value of a random variable. Then the
matrices $X_{e}$ satisfy: (a) $d_{v}X_{e}d_{w}=\delta_{v=s(e)}\delta_{w=t(e)}X_{e}$;
(b) $E(tr(X_{e}^{*}X_{e}))=E(tr(X_{e}X_{e}^{*}))$ is independent
of $N$ and converges to $(\mu(s(e))\mu(t(e)))^{1/2}$ as $M\to\infty$;
(c) For any $v\in V$, $w\in L_{k}^{\pm},$ $\lim_{M\to\infty}\lim_{N\to\infty}E(tr(d_{v}X_{w}))=tr(\delta_{v})Tr_{0}(w)(v)$
(here $Tr_{0}(w)(v)$ means the value of the function $Tr_{0}(w)\in\ell^{\infty}(\Gamma)$
at $v\in\Gamma$).
\end{prop}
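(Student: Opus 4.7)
Parts (a) and (b) reduce to direct computations. For (a), $d_v$ and $d_w$ are diagonal projections onto the row and column blocks indexed by $v$ and $w$, and $X_e$ is supported only in the $(s(e),t(e))$-block by construction, so $d_v X_e d_w$ equals $X_e$ precisely when $(v,w)=(s(e),t(e))$ and vanishes otherwise. For (b), writing out the matrix trace and using the given variance yields
\[
E(tr(X_e^*X_e))=\frac{1}{NM}\cdot N\cdot M_{t(e)}\cdot N\cdot M_{s(e)}\cdot(\mu(s(e))\mu(t(e)))^{-1/2}(NM)^{-1}=\frac{M_{s(e)}M_{t(e)}}{M^2}(\mu(s(e))\mu(t(e)))^{-1/2},
\]
which is independent of $N$ and converges to $(\mu(s(e))\mu(t(e)))^{1/2}$ as $M\to\infty$ using $M_v/M\to\mu(v)$; since the matrices have finite support, $E(tr(X_e X_e^*))$ gives the same value via $tr(AB)=tr(BA)$.

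For (c), the main tool is Wick's theorem for complex Gaussians combined with the standard genus argument for Wishart-type matrices. By (a), the expectation vanishes unless the loop $w=e_1\cdots e_{2k}$ starts at $v$, so assume this. I would expand
\[
E(tr(d_v X_w))=\frac{1}{NM}\sum_{i_\bullet,m_\bullet,v_\bullet}E\Bigl(\prod_{\ell=1}^{2k}X^{e_\ell}_{i_\ell i_{\ell+1}\,m_\ell m_{\ell+1}\,v_\ell v_{\ell+1}}\Bigr),
\]
with cyclic indices and $v_1=v$, and apply Wick. Independence across distinct unoriented edges together with $X_{e^o}=X_e^*$ forces any pair $\{p,q\}$ in a contributing pairing $\pi$ to satisfy $e_p=e_q^o$; each such pair produces the variance weight $(\mu(s(e_p))\mu(t(e_p)))^{-1/2}(NM)^{-1}$ together with Kronecker deltas identifying the corresponding $i$-, $m$- and $v$-indices.

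The next step is the genus argument. After collapsing the index sums via the Kronecker deltas of $\pi$, the number of remaining free index sums equals the number of faces of the associated ribbon graph, maximised exactly when $\pi$ is non-crossing; crossing pairings produce $O(N^{-2})$ contributions and drop out in the $N\to\infty$ limit. For a non-crossing $\pi$ compatible with $v$, the factors $M_{v_\bullet}/M$ from each face converge as $M\to\infty$ to $\mu$-values which, combined with the variance normalisations $(\mu(s(e_i))\mu(t(e_i)))^{-1/2}$, should give the weight $\mu(v)\prod_{\{i,j\}\in\pi}\sigma(e_i)$. Summing over compatible non-crossing $\pi$ and invoking Lemma~\ref{lemma:Tr0Formula} then yields $\mu(v)\phi_v(w)=tr(\delta_v)Tr_0(w)(v)$, as required.

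The main obstacle is precisely the last bookkeeping: verifying that the $\mu$-factors arising from the variances, the block sizes $M_v$, and the face identifications combine to produce exactly the $\sigma(e_i)$-weights of Lemma~\ref{lemma:Tr0Formula}. I would carry this out by induction on $k$, decomposing a non-crossing pairing according to the partner $q$ of position $1$: the pair $\{1,q\}$ splits the remaining positions into two independent non-crossing sub-pairings, and the induction then matches term-by-term the recursion $\phi_v(x)=\sum_{x=ex_1e^ox_2}\sigma(e)\phi_{t(e)}(x_1)\phi_v(x_2)$, reducing the whole identity to the base case $w=ef^o$ already handled in (b).
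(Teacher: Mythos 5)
Your parts (a) and (b) match the paper's computations exactly. For part (c) you take a genuinely different route. The paper invokes existing results (citing Benaych-Georges and Shlyakhtenko) that the block matrices $X_e$ converge in distribution, jointly with the diagonal algebra $A$, to a family of $A$-valued semicircular variables with covariance $\theta_e(\delta_w)=\delta_{w=s(e)}\delta_{s(e)}\sigma(e)$; the non-crossing pairing formula then follows at once from the moment--cumulant formula for operator-valued semicircular families, and the match with Lemma~\ref{lemma:Tr0Formula} is immediate. You instead propose to re-derive the relevant portion of that citation from scratch by expanding the trace, applying Wick's theorem for complex Gaussians, and running the genus/face-counting argument to suppress crossing pairings in the $N\to\infty$ limit. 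This buys self-containment and transparency (the genus argument makes visible exactly why planarity appears), at the cost of having to do the bookkeeping that the cited results encapsulate.

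That bookkeeping is precisely where your proposal remains incomplete, and you acknowledge this yourself: verifying that the block sizes $M_{v_\bullet}/M$, the variance normalizations $(\mu(s(e_i))\mu(t(e_i)))^{-1/2}$, and the face structure combine to give exactly $\mu(v)\prod_{\{i,j\}\in\pi}\sigma(e_i)$ is not carried out, only sketched as an induction mimicking the recursive formula in Lemma~\ref{lemma:Tr0Formula}. Two further points deserve explicit attention if you wish to complete this route. First, the order of limits in the statement is $\lim_{M\to\infty}\lim_{N\to\infty}$, so the genus suppression must be entirely in powers of $N$ with $M$ held fixed; this does hold here, but it is worth stating that the face count you need is in the $i$-indices alone, since the $m$-indices and the $v$-indices are constrained differently (the vertex indices $v_\bullet$ are in fact forced by the loop $w$ and the block supports of the $X_e$, not freely summed). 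Second, your induction scheme reducing to the base case $w=ef^o$ and the recursion for $\phi_v$ is exactly the right shape, but to make it a proof you must check that a pair $\{1,q\}$ in a non-crossing pairing really does factor the $N$- and $M$-sums into independent sums over the two arcs, and that the resulting weight is multiplicative across the split --- i.e.\ that the face between positions $1$ and $q$ contributes $M_{t(e_1)}/M$, matching $\sigma(e_1)$ after dividing by the variance factor. Once those details are written out, your argument is a valid, more elementary alternative to the paper's citation-based proof.
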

\begin{proof}
(a) and (b) are both straightforward; note that\[
E(tr(X_{e}X_{e}^{*}))=\frac{1}{(\mu(s(e))\mu(t(e)))^{1/2}}\frac{M_{v}M_{w}}{M^{2}}\to(\mu(s(e))\mu(t(e)))^{1/2}.\]
To see (c), we first note that if $w=ee^{o}$ then $E(tr(X_{w}))\to(\mu(s(e))\mu(t(e))^{1/2}$
as $N\to\infty$ and then $M\to\infty$. On the other hand, $tr(Tr_{0}(w))=\sigma(e)tr(w)=\sigma(e)\mu(s(e))=(\mu(s(e))\mu(t(e)))^{1/2}$. 

Denote by $\mathcal{E}$ the map the conditional expectation onto
the algebra $A$. Then we have that if $v=s(e)$, $\mathcal{E}(X_{e}X_{e}^{*})$
is a multiple of $d_{v}$. Since $\mathcal{E}$ is $tr$-preserving,
we have\[
E(\mathcal{E}(X_{e}X_{e}^{*}))=tr(v)^{-1}tr(\mathcal{E}(X_{e}X_{e}^{*}))\delta_{v}=\sigma(e)d_{v}.\]
In particular, we see that\[
E(\mathcal{E}(X_{e}X_{f}^{*}))=\delta_{e=f}\ \delta_{v}\ \sigma(e).\]
It is known (see e.g. \cite{benaychgeorges:rectRandomMatrices,shlyakht:bandmatrix})
that the variables $\{X_{e}:e\in\Gamma\}$ converge in distribution
(jointly also with elements of $A$) to a family of $A$-valued semicircular
variables with variance\[
\theta_{e}:\delta_{w}\mapsto\delta_{w=v}\delta_{v}\ \sigma(e).\]
Hence if $w=e_{1}\cdots e_{2k}$, then for any $a\in A$, \[
\lim_{N\to\infty}\lim_{M\to\infty}tr(d_{v}(E(\mathcal{E}(X_{w})))=tr\left(\delta_{v}\sum_{\pi\in NC(2k)}\prod_{\{i,j\}\subset\pi}\delta_{e_{i}=f_{i}}\sigma(e)\right).\]
By Lemma \ref{lemma:Tr0Formula}, we see that\[
tr(E(\mathcal{E}(X_{w}))d_{v})\to tr(\delta_{v})Tr_{0}(w)(v),\qquad\forall v\in V,\]
as claimed.
\end{proof}
Since the trace $tr$ is positive and faithful on $A$ we conclude
that the center-valued trace $Tr_{0}$ is non-negative:

\begin{cor}
$Tr_{0}(x^{*}\wedge_{0}x)\geq0$ if $x\in P^{\Gamma}$.
\end{cor}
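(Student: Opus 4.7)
The plan is to exploit the random matrix realization established in the proposition together with the fact that the assignment $w \mapsto X_w$ is a $*$-homomorphism from $(P^\Gamma, \wedge_0)$ to random matrices. Positivity is then inherited from the manifest positivity of $X^*X$ in the matrix algebra.

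More precisely, fix $v \in \Gamma_+$. To show that $Tr_0(x^* \wedge_0 x)$ is a non-negative function on $\Gamma_+$, it suffices to show that its value at each $v$ is non-negative. Since the value at $v$ only depends on $(x^* \wedge_0 x) \wedge_0 \delta_v$, which in turn only involves loops of bounded length starting and ending at $v$ (and there are only finitely many such loops since $\Gamma$ is locally finite), we may reduce to the case where $x$ is a finite linear combination of loops; the general case then follows by taking $x \wedge_0 \delta_v$ and noting that $Tr_0(\,\cdot\,)(v)$ is unaffected.

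For such finitely supported $x$, the matrix $X_x$ is well-defined, and since $w \mapsto X_w$ is multiplicative with respect to $\wedge_0$ and intertwines the involutions ($X_{w^*} = X_w^*$, because reversing a loop corresponds to taking adjoints of each $X_e$), we have
\[
X_{x^* \wedge_0 x} = X_x^* X_x \geq 0
\]
as a matrix. Consequently, using cyclicity of the trace and the fact that $d_v = d_v^2$,
\[
\operatorname{tr}\bigl(d_v\, X_x^* X_x\bigr) = \operatorname{tr}\bigl((X_x d_v)^*(X_x d_v)\bigr) \geq 0,
\]
so $E\bigl(\operatorname{tr}(d_v X_{x^* \wedge_0 x})\bigr) \geq 0$ for all $N, M$. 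Passing to the double limit afforded by part (c) of the preceding proposition,
\[
\operatorname{tr}(\delta_v)\, Tr_0(x^* \wedge_0 x)(v) = \lim_{M \to \infty}\lim_{N \to \infty} E\bigl(\operatorname{tr}(d_v X_{x^* \wedge_0 x})\bigr) \geq 0.
\]
Since $\operatorname{tr}(\delta_v) = \mu(v) > 0$, we conclude $Tr_0(x^* \wedge_0 x)(v) \geq 0$ for every $v \in \Gamma_+$, which is the asserted positivity in $P_{0,+}^\Gamma = \ell^\infty(\Gamma_+)$.

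The only delicate point is the reduction to finitely supported $x$, which requires checking that the proposition extends by linearity to finite sums of loops (it does, immediately) and that the value of $Tr_0(x^* \wedge_0 x)$ at each vertex $v$ is genuinely local --- i.e., depends only on the restriction of $x$ to loops meeting a bounded neighborhood of $v$. This locality is exactly the content of the parenthetical remark just before Lemma \ref{lemma:Tr0Formula} about $a \wedge_0 v$ having finite support. No further analytic work is needed.
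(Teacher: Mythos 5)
Your proof is correct and follows essentially the same route as the paper: it invokes part (c) of the preceding proposition to reduce positivity of $Tr_0$ to positivity of $\operatorname{tr}(d_v X_x^* X_x)$, which is manifest since $X_x^* X_x$ is a positive matrix and $d_v$ is a projection, and then passes to the limit. The paper compresses this into a single remark ("Since the trace $tr$ is positive and faithful on $A$\dots"); you have simply spelled out the reduction to finitely supported elements and the $*$-homomorphism property of $w \mapsto X_w$, both of which the paper takes as understood.
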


\subsection{Another construction of random block matrices.}

Recall that a bi-partite graph can be used as a Bratteli diagram to
describe an inclusion of two algebras. 

Let $B\subset C$ be an inclusion of multi-matrix algebras corresponding
to the graph $\Gamma$. This means $B=\bigoplus_{v\in V_{+}}M_{k(v)\times k(v)}$
and $C=\bigoplus_{w\in V_{-}}M_{l(w)\times l(w)}$. In particular,
each $v\in\Gamma_{+}$ corresponds to a central projection $p_{v}$
in $B$ (the unit of the $v$-th direct summand), and each $w\in\Gamma_{-}$
corresponds to a central projection $q_{w}\in C$. The inclusion $B\subset M$
is such that $p_{v}q_{w}=0$ if there is no edge between $v$ and
$w$. If there are $r$ edges between $v$ and $w$, then $M_{k(v)\times k(v)}=p_{v}Bp_{v}$
is included into $q_{w}Cq_{w}=M_{l(w)\times l(w)}$ with index $r$.
In particular, this means that $l(w)=rk(v)$ and also that we can
choose $r$ orthogonal projections $\{P^{e}\}_{s(e)=v,t(e)=w}$ in
$q_{w}Cq_{w}$ with the property that $P^{e}q_{w}Cq_{w}P^{e}\stackrel{\phi_{e}}{\cong}M_{k(v)\times k(v)}$
and the inclusion of $p_{v}Bp_{v}$ into $q_{w}Cq_{w}$ is given by
$x\mapsto\sum_{s(e)=v,t(e)=w}P^{e}\phi_{j}(x)P^{e}$. Choose also
isometries $V_{e,f}$ so that $P^{e}V_{e,f}=V_{e,f}P^{f}$.

Let $Tr$ be the semi-finite trace on $B\oplus C$ determined by the
requirement that $Tr(p_{v})=\mu(v)$, $Tr(q_{w})=\mu(w)$.

Let $Y$ be a semicircular element, free from $B\oplus C$ (this only
makes sense in the case that $Tr(1)<\infty$; more precisely, we shall
consider a large projection $Q$ in the center of $B\oplus C$ and
consider an element $Y$ free from $Q(B\oplus C)Q$ with respect to
$Tr(Q)^{-1}Tr(\cdot)$; our computations will not depend on $Q$ once
it is large enough).

To a positive edge $e$, we associate: (i) a central projection $p_{s(e)}\in B$;
(ii) a projection $ $$P^{e}\in q_{t(e)}Cq_{t(e)}\subset C$.

Let $Y_{e}=(\mu(t(e))\mu(s(e)))^{-1/4}\sum_{s(f)=s(e),t(f)=t(e)}(p_{s(e)}YP_{e})V_{e,f}$
if $e\in E_{+}$ and $Y_{e}=Y_{e^{o}}^{*}$ if $e\in E_{-}$.

Note that $Y_{e}Y_{f}=0$ unless $t(e)=s(f)$. We can think of $Y_{e}$
as a limit of a {}``$\mu(s(e))\times\mu(t(e))$'' random block matrix,
since its left and right support projections, $p_{v}$ and $q_{w}$,
have traces $\mu(s(e))$ and $\mu(t(e))$. In fact, one can model
$Y$ by a suitable GUE random matrix in the limit when its size goes
to infinity, in which case the variables $Y_{e}$ are indeed approximated
in law by random blocks as their sizes go to infinity.

Furthermore, if $e\in E_{+}$,\begin{eqnarray*}
Tr(p_{v}Y_{e}q_{w}Y_{f}^{*}) & = & (\mu(t(e))\mu(s(e)))^{-1/2}Tr(p_{v}p_{s(e)})Tr(\sum_{e',f'}V_{e,e'}q_{w}V_{f',f})\\
 & = & (\mu(t(e))\mu(s(e)))^{-1/2}\delta_{v=s(e)}Tr(p_{v})\delta_{w=t(e)}\delta_{v=f}Tr(q_{w}).\end{eqnarray*}
Thus also $Tr(q_{w}Y_{f^{o}}p_{v}Y_{e^{o}}^{*})=(\mu(t(e))\mu(s(e)))^{-1/2}\delta_{v=s(e)}Tr(p_{v})\delta_{w=t(e)}\delta_{v=f}Tr(q_{w})$.
It follows that if we denote by $\mathcal{E}$ the conditional expectation
onto the center of $B\oplus C$, then, keeping in mind that $Tr(\delta_{v})=\mu(v)$,\[
\mathcal{E}(Y_{e}\delta_{v}Y_{f})=(\mu(t(e))\mu(s(e)))^{-1/2}\delta_{e=f^{o}}\delta_{v=t(e)}\delta_{s(e)}\mu(t(v))\ =\delta_{e=f^{o}}\delta_{v=t(e)}\delta_{s(e)}\sigma(e).\]
Thus the variables $\{Y_{e}:e\in E\}$ have the same joint law as
the variables $\{X_{e}:e\in E\}$ that we constructed in the previous
section.

\section{The Fock Space Model.}

\subsection{A Hilbert bimodule associated to a bi-partite graph.}

Let $\Gamma$ be a bi-partite graph, as before. Consider the real
vector space $H$ with basis given by the (oriented) edges $E$ of
the graph; we denote, as before, by $E_{+}$ the set of positively
oriented edges. Then $H$ is equipped with a natural conjugation which
takes an edge to its opposite, $e\mapsto e^{o}$ and can thus be endowed
with a complex structure: $i(e+e^{o})=(e-e^{o})$ for any positively-oriented
$e$. The inner product on $H$ is determined by requiring that $\langle e,f\rangle=0$
unless $e=f$ and \[
\Vert e\Vert^{2}=\left[\frac{\mu(s(e))}{\mu(t(e))}\right]^{1/2}.\]
(Note that $e\mapsto e^{o}$ is \emph{not} isometric). As before,
we shall use the notation\[
\sigma(e)=\left[\frac{\mu(t(e))}{\mu(s(e))}\right]^{1/2}=\Vert e\Vert^{-2}.\]

Let $A$ denote the abelian algebra $A=\ell^{\infty}(\Gamma)$, where
as before $\Gamma$ denotes the set of vertices of $\Gamma$. Then
$H$ is naturally an $A,A$-bimodule: given $e$ an edge in $E$,
define\[
v\cdot e\cdot v'=\delta_{v=s(e)}\delta_{v'=t(e)}e.\]
Moreover, $H$ has a natural $A$-valued inner product:\[
\langle e,f^{o}\rangle_{A}=\langle e,f^{o}\rangle s(e)=\langle e,f^{o}\rangle t(f).\]

\subsection{The operators $c(e)$, the weight $\phi$, and the $A$-valued conditional
expectation $E$.}

We now consider the Fock space \cite{pimsner}\[
\mathcal{F}=A\oplus\bigoplus_{k\geq0}H^{\otimes_{A}k}\](here $\otimes_A$ denotes the relative
bimodule tensor product).  For $e\in E$ we consider the operator\[
\ell(e):\mathcal{F}\to\mathcal{F},\qquad\ell(e)\xi=e\otimes\xi.\]
Its adjoint is given by\[
\ell(e)^{*}(e_{1}\otimes\cdots\otimes e_{n})=\langle e,e_{1}\rangle_{A}e_{2}\otimes\cdots\otimes e_{n}.\]
Note that the norm of this operator is given by\[
\Vert\ell(e)\Vert=\Vert\ell^{*}(e)\ell(e)\Vert^{1/2}=\Vert e\Vert^{1/2}.\]
Let also\[
c(e)=\ell(e)+\ell(e^{o})^{*}.\]
Note that $c(e)^{*}=c(e^{o})$.

Let $B(\mathcal{F})$ be the algebra of bounded adjointable operators
on $\mathcal{F}$ and let $E:B(\mathcal{F})\to A$ be the natural
conditional expectation given by\begin{equation}
E(X)=\langle1_{A},X1_{A}\rangle_{A}.\label{eq:defOfE}\end{equation}
 Each vertex $v\in\Gamma$ determines a state on $B(\mathcal{F})$
given by\[
\phi_{v}(X)=\delta_{v}\circ E(X),\]
where $\delta_{v}:A\to\mathbb{C}$ is the point evaluation at $v$.
Then\[
\phi=\sum_{v}\phi_{v}\]
 is a weight on $B(\mathcal{F})$. Note that $\phi$ is finite on
all finite words in $c(e):e\in E$, and therefore defines a semifinite
weight on the von Neumann algebra $W^{*}(c(e):e\in E)$ (in the GNS
representation associated to $\phi$).

\begin{lem}
\label{lem:NoAtoms}(i) The weight $\phi$ and the conditional expectation
$E$ are faithful on this algebra. 

(ii) The modular group of $\phi$ is determined by $\sigma_{t}^{\phi}(c(e))=\left[\frac{\mu(t(e))}{\mu(s(e))}\right]^{it}c(e)=\sigma(e)^{2it}c(e)$. 

(iii) Consider $ $$\cup=\bigoplus_{v\textrm{ even}}\sum_{e\in\Gamma_{+}(v)}\sigma(e)c(e)c(e^{o})$,
where $\Gamma_{+}(v)$ denotes the set of all edges that start at
$v$. Then for each $v$, the law of $\cup$ with respect to $\phi_{v}$
has no atoms and is the free Poisson law with $R$-transform $\delta(1-z)^{-1}$.
In particular, $v\cup v$ is bounded for all $v$ and thus the (possibly
infinite) direct sum defining $\cup$ yields a bounded operator.
\end{lem}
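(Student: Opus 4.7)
The plan is to handle the three parts in order, using Tomita--Takesaki theory for (i) and (ii), and then feeding these into a Fock-space moment computation for (iii) that reproduces the recursion already established in Lemma \ref{lem:MGFcup}.

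For (i) and (ii), I would show that the vacuum $\Omega:=1_{A}$ is cyclic for $M:=W^{*}(A,c(e):e\in E)$ acting on $\mathcal{F}$. Cyclicity follows by induction on tensor length from the identity $\ell(e)=c(e)-\ell(e^{o})^{*}$, which lets one realize any basic tensor $e_{1}\otimes\cdots\otimes e_{n}=\ell(e_{1})\cdots\ell(e_{n})\Omega$ as a polynomial in the $c(e_{i})$'s applied to $\Omega$ modulo shorter tensors (already in $M\Omega$ by induction). To identify the modular operator, I apply Tomita's theorem: the conjugation $S(x\Omega)=x^{*}\Omega$ sends $e=c(e)\Omega$ to $c(e^{o})\Omega=e^{o}$, and polar decomposing $S=J\Delta^{1/2}$ gives
\[
\Delta\,e \;=\; \frac{\|Se\|^{2}}{\|e\|^{2}}\,e \;=\; \frac{\|e^{o}\|^{2}}{\|e\|^{2}}\,e \;=\; \sigma(e)^{2}\,e.
\]
Multiplicativity of $\Delta$ on tensors extends this to $\Delta(e_{1}\otimes\cdots\otimes e_{n})=\prod_{i}\sigma(e_{i})^{2}(e_{1}\otimes\cdots\otimes e_{n})$, whence $\sigma_{t}^{\phi}(c(e))\Omega=\Delta^{it}c(e)\Omega=\sigma(e)^{2it}e=\sigma(e)^{2it}c(e)\Omega$, and cyclicity of $\Omega$ yields (ii). Tomita--Takesaki then gives $JMJ=M'$ and hence separating-ness of $\Omega$. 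Faithfulness of $E$ follows because $E(X^{*}X)=\langle X\Omega,X\Omega\rangle_{A}=0$ forces $X\Omega=0$, hence $X=0$; faithfulness of $\phi$ is then immediate from $\phi(X^{*}X)=\operatorname{tr}_{A}(E(X^{*}X))$ and faithfulness of the trace on $A$.

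For (iii), observe first using (ii) that $\sigma_{t}^{\phi}(c(e)c(e^{o}))=c(e)c(e^{o})$ since $\sigma(e^{o})\sigma(e)=1$, so $\cup$ lies in the centralizer of $\phi$ and $\phi_{v}$ is tracial on $W^{*}(\cup)$. The moments $\phi_{v}(\cup^{n})$ can then be computed directly on the Fock space. The two basic identities $c(e^{o})\Omega=e^{o}$ and $c(e)\,e^{o}=e\otimes e^{o}+\sigma(e)\,p_{s(e)}$, iterated and organized by the non-crossing pair structure (the scalar $\sigma(e)p_{s(e)}$ contribution contracts an interval while the tensor contribution carries through), combined with the Perron--Frobenius identity $\sum_{s(e)=v}\mu(t(e))=\delta\mu(v)$ which extracts a factor $\delta$ each time an interval is closed off, lead to the recursion
\[
\phi_{v}(\cup^{n}) \;=\; (\delta-1)\,\phi_{v}(\cup^{n-1})+\sum_{k=0}^{n-1}\phi_{v}(\cup^{k})\,\phi_{v}(\cup^{n-k-1})
\]
that drives the proof of Lemma \ref{lem:MGFcup}. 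The moment generating function is therefore the free Poisson with $R$-transform $\delta(1-z)^{-1}$. Compact support $[(\sqrt{\delta}-1)^{2},(\sqrt{\delta}+1)^{2}]$ of free Poisson gives a uniform norm bound on each compression $v\cup v$, so the direct sum $\cup=\bigoplus_{v}v\cup v$ is bounded. Absence of atoms is then read off from the explicit free Poisson density, which is absolutely continuous whenever $\delta\ge 1$---the regime guaranteed by the bipartite graph Perron--Frobenius hypothesis.

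The main obstacle is the combinatorial bookkeeping in (iii): the expansion of $\cup^{n}$ on the Fock space produces many terms, and one must show that only contributions corresponding to non-crossing pair contractions survive, and that the edge-labeled sums collapse via Perron--Frobenius to the scalar recursion matching Lemma \ref{lem:MGFcup}. Once this is carefully organized, (iii) reduces to invoking that recursion, while (i)--(ii) are standard Tomita--Takesaki applied to the Fock-space framework.
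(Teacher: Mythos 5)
Your overall strategy is sound, but it diverges from the paper's in both halves and leaves a nontrivial amount of work undone in (iii).

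For (i)--(ii), you redo the Tomita--Takesaki analysis from scratch on the relative (amalgamated) Fock space $\mathcal{F}$. The paper instead compresses from the full Fock space $F(H)$ via the projection $P$ onto composable tensors, identifies the image of the GNS representation $\pi_v$ inside the free Araki--Woods factor of \cite{shlyakht:quasifree:big}, and cites the known faithfulness and modular-group formulas for the free quasi-free state there. Your computation $\Delta\,e=\sigma(e)^{2}e$ is the right eigenvalue, but the claim that $\Delta$ is ``multiplicative on tensors'' is precisely the second-quantization structure established in \cite{shlyakht:quasifree:big}; it is not automatic from the polar decomposition of $S$ alone, and it deserves either a citation or an argument. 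You should also be more careful that the cyclic vector you call $\Omega=1_{A}$ does not exist as a Hilbert-space vector when $\Gamma$ is infinite: $\operatorname{tr}(1_{A})=\sum_{v}\mu(v)$ can diverge, so the argument has to be run one $\phi_{v}$ at a time (which is what the paper does via $\pi_{v}$). These are fixable, but your route re-proves what the paper imports.

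For (iii), the substantive difference is that you never invoke freeness. The paper's argument hinges on the observation that, after passing to $F(H)$, the summands $b(e)=\sigma(e)\hat c(e)\hat c(e^{o})$ for distinct $e\in\Gamma_{+}(v)$ generate \emph{free} subalgebras with respect to the vacuum state (orthogonality of the one-dimensional subspaces spanned by $e,e^{o}$), each $b(e)$ individually has a free Poisson law with $R$-transform $(\mu(t(e))/\mu(v))(1-z)^{-1}$ by \cite{shlyakht:quasifree:big}, and additivity of the $R$-transform under free convolution immediately produces $R$-transform $\delta(1-z)^{-1}$ thanks to the Perron--Frobenius identity $\sum_{s(e)=v}\mu(t(e))=\delta\mu(v)$. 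This single observation eliminates all of the combinatorial bookkeeping that you yourself flag as ``the main obstacle.'' Your plan to grind out the moment recursion of Lemma \ref{lem:MGFcup} on the Fock space is plausible in outline, but it is exactly the diagrammatic Catalan-style computation that the paper deliberately replaces here (the text after Lemma \ref{lem:MGFcup} announces Lemma \ref{lem:NoAtoms} as an ``alternative computation using free convolution''), and you have not actually carried it out: showing that only non-crossing contractions survive and that the edge sums telescope to $(\delta-1)$ and product terms is the whole content, and it amounts to re-proving Lemma \ref{lemma:Tr0Formula} with none of the planar-algebra machinery available to shortcut it. As a minor point, your contraction identity should read $c(e)\,e^{o}=e\otimes e^{o}+\sigma(e)\,\delta_{t(e)}$ (the scalar sits at $t(e)=s(e^{o})$, not $s(e)$), though this does not affect the final recursion. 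The boundedness and no-atom conclusions at the end are fine once one notes that faithfulness of $\phi_{v}$, established in (i), is what lets you read off $\|v\cup v\|$ from the support of the free Poisson law.
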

\begin{proof}
The GNS vector space $\mathcal{F}_{v}$ associated to $\phi_{v}$
can be identified with the subspace of the Fock space $F(H)=\mathbb{C}v\oplus\bigoplus_{k\geq1}H^{\otimes k}$
spanned by tensors of the form $e_{1}\otimes\cdots\otimes e_{n}$,
$e_{j}\in E$ for which $e_{1}\cdots e_{n}$ form a path (i.e., are
{}``composable'': $s(e_{j})=t(e_{j+1})$). If we denote by $\hat{\ell}(e):F(H)\to F(H)$
the operator $\hat{\ell}(e)\xi=e\otimes\xi$ and by $\hat{c}(e)$
the operator $\hat{c}(e)=\hat{\ell}(e)+\hat{\ell}(e^{o})^{*}$, then
we have\[
P\hat{c}(e)P=c(e),\qquad P:F(H)\to\mathcal{F}_{v}\textrm{ orthogonal projection.}\]
Let $P$ be the set of all paths in $\Gamma$ and $P(v)$ be the set
of all paths starting at $v$. For a path $w=e_{1}\cdots e_{n}\in P(v)$,
let $c(w)=c(e_{1})\cdots c(e_{n})$ and similarly for $\hat{c}.$
We then see that the joint laws associated to the vacuum expectation
state of the variables\[
\{c(w):w\in P(v)\}\textrm{ and }\{\hat{c}(w):w\in P(v)\}\]
have the same law. Indeed, $\hat{c}(w)v=c(w)v$ if $w\in P_{v}$. 

It follows that the von Neumann algebra generated by $(A,c(e):e\in E)$
in the GNS representation $\pi_{v}$ associated to $\phi_{v}$ can
be embedded into the von Neumann algebra $W^{*}(\hat{c}(e):e\in E)$
in such a way that the restriction of the state $\hat{\phi}_{v}=\langle v,\cdot v\rangle$
to the former algebra is exactly $\phi_{v}$. But it is known \cite{shlyakht:quasifree:big}
that $\hat{\phi}_{v}$ is faithful, and so $\phi_{v}$ is faithful
(on the image in the GNS construction $\pi_{v}$). Furthermore, the
modular group of $\hat{\phi}_{v}$ is given by\[
\sigma_{t}^{\hat{\phi}_{v}}(\hat{c}(e))=\left[\frac{\mu(t(e))}{\mu(s(e))}\right]^{it}\hat{c}(e).\]
It follows that\[
\sigma_{t}^{\phi_{v}}(\pi_{v}(c(e)))=\left[\frac{\mu(t(e))}{\mu(s(e))}\right]^{it}\pi_{v}(c(e)).\]

It is clear that the GNS vector space for $\phi$ is just the direct
sum of the GNS vector spaces for $\phi_{v}$ taken over all vertices
$v$. Thus $\phi$ is faithful and so $E$ is faithful (on the possibly
larger algebra $W^{*}(A,c(e):e\in E)$. Thus (i) holds.

Let now\[
Y=\sum_{e\in\Gamma_{+}(v)}\sigma(e)\hat{c}(e)\hat{c}(e^{o}).\]
Then $Y$ has the same law for $\hat{\phi}_{v}$ as does $\cup$ for
$\phi_{v}$. Note that $Y=\sum_{e\in\Gamma_{+}(v)}b(e)$, with $b(e)=\sigma(e)\hat{c}(e)\hat{c}(e^{o})$.
Thus $b(e)$ are free and so the law of $Y$ satisfies\[
\mu_{Y}=\boxplus_{e\in\Gamma_{+}(v)}\mu_{b(e)}.\]
Now, for each $e$, $b(e)^{1/2}$ has free Poisson distribution with
$R$-transform $\mu(t(e))/\mu(s(e))\cdot(1-z)^{-1}$ (see \cite[Remark 4.4 on p. 347]{shlyakht:quasifree:big}).
Thus the law of $b(e)$ has only one atom of mass $\alpha(e)=1-\mu(t(e))/\mu(s(e))$
at zero (this expression is to be interpreted as zero if it is negative).
It follows from additivity of $R$-transform \cite{DVV:book} that
the law of $Y$ is free Poisson with $R$-transform\[
(1-z)^{-1}\sum_{e\in\Gamma_{+}(v)}\frac{\mu(t(e))}{\mu(v)}=\frac{\delta}{1-z},\]
which will have an atom iff $\delta<1$. Since $\delta\geq1$, the
law of $Y$ has no atoms. Thus (iii) holds.

Finally, it is also clear that (ii) holds since a similar formula
holds in the GNS representation of each $\phi_{v}$ and $\phi=\sum\phi_{v}$.
\end{proof}
\begin{lem}
Let $L$ be the set of all loops in $\Gamma$. Then the algebra $W^{*}(c(w):w\in L)$
belongs to the fixed point of the modular group acting on the algebra
$W^{*}(c(w):w\in P)$.
\end{lem}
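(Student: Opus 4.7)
The plan is to reduce the statement to the explicit formula for the modular group given in part (ii) of Lemma~\ref{lem:NoAtoms} and then exploit the telescoping that occurs along a loop.

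First, I would recall that $\sigma_{t}^{\phi}$ is a $*$-automorphism of $W^{*}(c(e):e\in E)$, hence multiplicative. For any path $w=e_{1}e_{2}\cdots e_{n}\in P$, the elementary computation
\[
\sigma_{t}^{\phi}(c(w)) \;=\; \sigma_{t}^{\phi}(c(e_{1}))\,\sigma_{t}^{\phi}(c(e_{2}))\cdots \sigma_{t}^{\phi}(c(e_{n})) \;=\; \Bigl(\prod_{j=1}^{n}\sigma(e_{j})\Bigr)^{2it}\, c(w)
\]
follows from part (ii) of the preceding lemma, where $\sigma(e)=[\mu(t(e))/\mu(s(e))]^{1/2}$.

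Next, I would specialize to the case $w\in L$. Since the edges $e_{1},\ldots,e_{n}$ form a loop, they are composable ($s(e_{j+1})=t(e_{j})$) and close up ($s(e_{1})=t(e_{n})$). Consequently the product of the ratios telescopes:
\[
\prod_{j=1}^{n}\sigma(e_{j})^{2}
\;=\; \prod_{j=1}^{n}\frac{\mu(t(e_{j}))}{\mu(s(e_{j}))}
\;=\; 1.
\]
Therefore $\sigma_{t}^{\phi}(c(w))=c(w)$ for every $w\in L$, so each generator of $W^{*}(c(w):w\in L)$ lies in the fixed-point algebra.

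Finally, because $\sigma_{t}^{\phi}$ is a normal $*$-automorphism, its fixed-point set is a von Neumann subalgebra of $W^{*}(c(w):w\in P)$. Since it contains every generator $c(w)$ with $w\in L$, it contains the whole von Neumann algebra $W^{*}(c(w):w\in L)$, which establishes the claim. There is no real obstacle here — the content is entirely in the telescoping identity for loops, and everything else is a formal consequence of part (ii) of the previous lemma together with normality of the modular automorphism group.
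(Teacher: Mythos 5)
Your proof is correct and takes essentially the same approach as the paper: the paper's one-line argument is precisely the telescoping of the factors $\mu(t(e))/\mu(s(e))$ along a loop, which you spell out together with the (routine) observation that the fixed-point set of the modular automorphism group is a von Neumann subalgebra.
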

\begin{proof}
Since $w$ is a loop, the factors $\mu(t(e))/\mu(s(e))$ associated
to each factor in $c(w)$ cancel. 
\end{proof}

\subsection{The conditional expectation $E$ realizes $Tr_{0}$.}

Let $Y_{w}=c(w)$, $w\in L_{k}^{+}$ (the set of all loops starting
at a positive vertex and of length $2k$).

\begin{lem}
Let $w\in L$ be a loop given by $w=e_{1}\cdots e_{n}$. Then $\phi(Y_{w})=\sum_{\pi\in NC(2k)}\prod_{\{i,j\}\subset\pi}\delta_{e_{i}=e_{j}^{o}}\sigma(e_{i})$.
In particular, $E(Y_{w})=Tr_{0}(w)$.
\end{lem}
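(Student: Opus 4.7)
The plan is to reduce to a vacuum expectation on the standard free Fock space $F(H)$ and invoke the classical free Wick formula. By the identification used in the proof of Lemma~\ref{lem:NoAtoms}, there are operators $\hat{c}(e) = \hat{\ell}(e) + \hat{\ell}(e^o)^*$ on $F(H)$ and vectors $v \in F(H)$ (one for each vertex) such that $\phi_v(c(w)) = \langle v,\, \hat{c}(w)\,v\rangle$ for every path $w$ starting at $v$. For a loop $w \in L_k^+$ starting at $v_0$ one has $\phi_v(Y_w) = 0$ unless $v = v_0$, and hence
\[
\phi(Y_w) = \sum_v \phi_v(Y_w) = \phi_{v_0}(Y_w) = \langle v_0,\, \hat{c}(e_1)\cdots\hat{c}(e_{2k})\,v_0\rangle.
\]

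The second step is the free Wick formula on $F(H)$. Any monomial in $\hat{\ell}$ and $\hat{\ell}^*$ whose associated $\pm 1$ walk does not return to zero contributes nothing to the vacuum expectation, and the surviving monomials are exactly those whose pairing of creations with annihilations forms a non-crossing matching of $\{1,\dots,2k\}$. For each matched pair $\{i,j\}$ with $i<j$, position $i$ must contribute the annihilation $\hat{\ell}(e_i^o)^*$ and position $j$ must contribute the creation $\hat{\ell}(e_j)$; the Pimsner relation $\hat{\ell}(\xi)^*\hat{\ell}(\eta) = \langle\xi,\eta\rangle$ then yields the pair factor $\langle e_i^o,\,e_j\rangle_H = \delta_{e_j = e_i^o}\,\|e_i^o\|^2 = \delta_{e_i = e_j^o}\,\sigma(e_i)$. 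Summing over $\pi \in NC_2(2k)$ gives the first claim of the lemma. Since $\phi_v = \delta_v \circ E$, the same computation determines $E(Y_w)$ pointwise on $\Gamma$, and Lemma~\ref{lemma:Tr0Formula} identifies the resulting function with $Tr_0(w)$, so $E(Y_w) = Tr_0(w)$.

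The chief subtlety is that the $\hat{c}(e)$ are not the standard semicirculars $\hat{\ell}(\xi) + \hat{\ell}(\xi)^*$ but have possibly distinct creator and annihilator labels ($e$ versus $e^o$). This is absorbed into the Wick formula with no new difficulty: the only features used are $\hat{\ell}(\xi)^*\Omega = 0$ and $\hat{\ell}(\xi)^*\hat{\ell}(\eta) = \langle\xi,\eta\rangle$, both of which hold irrespective of whether the creator and annihilator at each site coincide, and which together force the non-crossing matching structure. Granted the identification between $\mathcal{F}$ and (a subspace of) $F(H)$ already supplied by Lemma~\ref{lem:NoAtoms}, what remains is routine combinatorial bookkeeping.
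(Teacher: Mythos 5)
Your proof is correct and follows essentially the same route as the paper: both apply the free Wick formula (sum over non-crossing pairings, pair factor from the $2$-point function) to the path-creation operators, and then quote Lemma~\ref{lemma:Tr0Formula} to identify the resulting expression with $Tr_0(w)$. The only difference is presentational — you transport the computation to the full Fock space $F(H)$ via the identification from Lemma~\ref{lem:NoAtoms} before applying Wick, while the paper works directly with the $A$-valued conditional expectation on the Pimsner Fock module — but the combinatorics and the pair factor $\langle e_i^o, e_j\rangle = \delta_{e_i = e_j^o}\sigma(e_i)$ are identical.
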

\begin{proof}
$E(Y_{w})=\sum_{\pi\in NC(2k)}\prod_{\{i,j\}\subset\pi}\delta_{e_{i}=e_{j}^{o}}\cdot E(Y_{e_{i}}Y_{e_{j}})s(e_{i})$.
Moreover, $E(c(e_{i})c(e_{i})^{*})=s(e_{i})\sigma(e_i)$.
The rest follows from Lemma \ref{lemma:Tr0Formula}.
\end{proof}
\begin{lem}
\label{lemma:embedFreeGroup}Let $L^{+}$ be the set of loops starting
at an even vertex. Consider $\mathcal{M}_{0}=W^{*}(c(w):w\in L)$
with its semi-finite weight $\phi$. Then each even $v\in\Gamma$,
defines a central projection in $\mathcal{M}_{0}$ and\[
(\mathcal{M}_{0},\phi)=\bigoplus_{v\textrm{\ even}}(v\mathcal{M}_{0}v,\phi_{v}).\]
For each $v$, the algebra $v\mathcal{M}_{0}v$ can be canonically
embedded into a free group factor.
\end{lem}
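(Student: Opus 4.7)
The proof splits into two essentially independent claims: a central decomposition of $\mathcal{M}_0$ indexed by even vertices, and an embedding of each reduced algebra into a free group factor. My plan is to extract the projections $v$ from $\mathcal{M}_0$ using the no-atom property in Lemma \ref{lem:NoAtoms}(iii), and then to realise each $v\mathcal{M}_0 v$ as a subalgebra of the centralizer of a free Araki--Woods factor, which by Shlyakhtenko's structural results is a free group factor.

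For the central decomposition, fix an even vertex $v$. Because $c(e)$ sends $t(e)\mathcal{F}$ into $s(e)\mathcal{F}$ and a loop composes such maps around a cycle, every generator $c(w)$ of $\mathcal{M}_0$ indexed by a loop $w$ based at $u$ satisfies $c(w)=u\,c(w)\,u$. Consequently the vertex projection $u$ (acting on $\mathcal{F}$ as the projection onto vectors starting at $u$) commutes with every generator of $\mathcal{M}_0$. To place $u$ itself in $\mathcal{M}_0$, I consider the element
\[
\cup_v := \sum_{e\in\Gamma_+(v)} \sigma(e)\, c(e)\,c(e^o),
\]
which lies in $\mathcal{M}_0$ as a SOT-convergent sum of the generators $c(ee^o)$ and is bounded by Lemma \ref{lem:NoAtoms}(iii). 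By that lemma its $\phi_v$-distribution is the free Poisson law with $R$-transform $\delta/(1-z)$; since $\delta\geq 1$ there is no atom at $0$, so the range projection of $\cup_v$ is exactly $v$. Range projections of elements of a von Neumann algebra belong to that algebra, so $v\in\mathcal{M}_0$, whence $v$ is a central projection of $\mathcal{M}_0$. Finally, the identity of $\mathcal{M}_0$ equals the supremum of the left supports of its generators, which is $\sum_{v\text{ even}}v$; combined with centrality of each $v$ this yields the direct sum decomposition $\mathcal{M}_0=\bigoplus_{v\text{ even}} v\mathcal{M}_0 v$ compatibly with $\phi=\sum_v\phi_v$.

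For the free-group embedding, fix an even vertex $v$. The proof of Lemma \ref{lem:NoAtoms}(i) already produces a $\phi_v$-preserving inclusion of the GNS algebra $(v\mathcal{M}_0 v,\phi_v)$ into $(W^*(\hat{c}(e):e\in E),\hat{\phi}_v)$ acting on the full Fock space $F(H)$ with vacuum at $v$. The target is a free Araki--Woods factor associated to the bimodule $H$, and by the analysis of \cite{shlyakht:quasifree:big} its centralizer -- the fixed-point algebra of the modular group of $\hat{\phi}_v$ -- is isomorphic to a free group factor $L(\mathbb{F}_k)$ (with $k=\infty$ when $\Gamma$ is infinite). Since $\sigma_t^{\hat{\phi}_v}(\hat{c}(e))=\sigma(e)^{2it}\hat{c}(e)$ and the Perron--Frobenius factors $\sigma(e)$ cancel around any loop, each $\hat{c}(w)$ for $w$ a loop is fixed by the modular group. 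Hence the image of $v\mathcal{M}_0 v$ under the embedding lies inside this centralizer, producing the desired inclusion into a free group factor.

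The principal obstacle is the last step: one must invoke Shlyakhtenko's identification of the centralizer of the quasi-free state with a free group factor and check that our data -- generators $\hat{c}(e)$ with variances determined by the Perron--Frobenius eigenvector on a possibly infinite bipartite graph -- fit the hypotheses of that theorem. One must also check that the embedding $v\mathcal{M}_0 v \hookrightarrow W^*(\hat{c}(e))^{\sigma^{\hat{\phi}_v}}$ intertwines $\phi_v$ with the restriction of $\hat{\phi}_v$, which is a faithful trace on the centralizer; this ensures the resulting inclusion into the free group factor is genuinely trace-preserving.
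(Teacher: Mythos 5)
Your proof is correct and is essentially the paper's argument, with one useful clarification in the first half. For the free-group embedding you use exactly the route the paper uses: pass to the GNS representation $\pi_v$, embed $v\mathcal{M}_0 v$ state-preservingly into a free Araki--Woods factor, observe that the Perron--Frobenius factors $\sigma(e)$ cancel around each loop so the image lies in the centralizer of the free quasi-free state, and invoke Shlyakhtenko's structure result from \cite{shlyakht:quasifree:big} to identify that centralizer with a free group factor. For the central decomposition the paper records only the commutation relation $v'c(w)=c(w)v'=\delta_{v=v'}c(w)$, which makes the vertex projections commute with the generators but does not by itself place them \emph{inside} $\mathcal{M}_0=W^*(c(w):w\in L)$ (the algebra is generated by loop operators, not by $A$). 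Your observation that $v$ is the support projection of $\cup_v$ fills this in cleanly: $\cup_v\in\mathcal{M}_0$ because it is a sum of the length-$2$ loop operators $c(ee^o)$, the no-atom statement of Lemma \ref{lem:NoAtoms}(iii) together with the faithfulness of $\phi_v$ from Lemma \ref{lem:NoAtoms}(i) gives $s(\cup_v)=v$, and support projections belong to the von Neumann algebra. Combined with $1_{\mathcal{M}_0}\leq\sum_{v\text{ even}}v$ this shows the vertex projections really lie in $\mathcal{M}_0$ and sum to its identity, so the decomposition is genuinely a central decomposition of the algebra and not merely of the representing Hilbert space.
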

\begin{proof}
If $w\in L$ is a loop starting at $v$, then $v'c(w)=c(w)v'=\delta_{v=v'}c(w)$. 

We have seen before that $\pi_{v}(W^{*}(c(w):w\in P))$ with its state
$\phi_{v}$ can be embedded into a free Araki-Woods factor associated
to $H$ and taken with its free quasi-free state, in a state-preserving
way. The image of $\mathcal{M}_{0}$ under $\pi_{v}$ is precisely
$v\mathcal{M}_{0}v$, and this image clearly lies in the centralizer
of the free quasi-free state. The free quasi-free state is periodic
(the modular group, restricted to $c(H)$ has as its eigenvectors
the edges of $\Gamma$) and therefore the centralizer is a free group
factor. 
\end{proof}
Note that $\phi=\bigoplus\phi_{v}$ is faithful. 

\begin{thm}
Let $w\in L^{+}$ be a loop on $\Gamma$, starting at an even vertex.
Then the map $w\mapsto c(w)$ extends to a trace-preserving embedding
with dense range of $(Gr_{0}P^{\Gamma},\wedge_{0},Tr_{0})$ into $(\mathcal{M}_{0},E)$.
Thus $Tr_{0}$ is a faithful center-valued trace.
\end{thm}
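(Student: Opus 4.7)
My plan is to prove the theorem in four interleaved steps: extend $c$ by linearity to a $*$-algebra homomorphism $Gr_0 P^\Gamma \to \mathcal{M}_0$, verify $E\circ c = Tr_0$, establish injectivity via a Fock-space calculation, and deduce faithfulness of $Tr_0$ from faithfulness of $E$.

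\emph{Extension to a $*$-algebra homomorphism.} An element $x \in Gr_{0}P^{\Gamma}$ is a finite sum of graded components $x_n \in P_{n,+}^\Gamma$, each of which is a (locally finite) bounded function on loops; I set $c(x) = \sum_{w} x(w)\, c(w)$. Although the sum over $w$ may be infinite, for any vector $\xi$ in the Fock space supported on paths through a fixed vertex $v$, only finitely many loops at $v$ of a given length produce a nonzero contribution (local finiteness of $\Gamma$), so $c(x)$ is a well-defined bounded operator in $\mathcal{M}_0$. The $*$-compatibility $c(w^{*}) = c(w)^{*}$ is immediate from $c(e)^{*} = c(e^{o})$ and the definition of the involution on loops as orientation reversal. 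For multiplicativity, note that if $a$ is a loop at $v$ and $b$ a loop at $v'$, then $c(a) = v\, c(a)\, v$ and $c(b) = v'\, c(b)\, v'$, so the product $c(a) c(b)$ vanishes unless $v = v'$; when $v = v'$ it equals $c(e_1)\cdots c(e_{2k}) c(f_1)\cdots c(f_{2m}) = c(e_1\cdots e_{2k} f_1 \cdots f_{2m})$, which is exactly the path-concatenation formula for $a\wedge_{0} b$ derived earlier for the graph planar algebra. The identity $E(c(w)) = Tr_{0}(w)$ is then the content of the preceding lemma, whose proof matches the non-crossing-pairing expansion of $E(c(w))$ with the recursion for $Tr_{0}$ given in Lemma \ref{lemma:Tr0Formula}; this extends to all of $Gr_0 P^\Gamma$ by linearity.

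\emph{Injectivity.} I would compute the top-degree Fock-space component of $c(x)\, 1_{A}$. For a loop $w = e_1\cdots e_{2n}$, using $c(e_i) = \ell(e_i) + \ell(e_i^{o})^{*}$, an induction on $n$ yields
\[
c(w)\, 1_{A} = e_1 \otimes_{A} \cdots \otimes_{A} e_{2n} + (\text{tensors of strictly lower degree}),
\]
since only the creation parts raise the tensor degree and $\ell(e_1)\cdots \ell(e_{2n}) 1_A$ is the basis tensor associated to $w$. Distinct loops give linearly independent basis tensors in $H^{\otimes_A 2n}$, and the decomposition $\mathcal{F} = A \oplus \bigoplus_{k\ge 1} H^{\otimes_A k}$ is orthogonal. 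Hence if $x = \sum_n x_n$ has $x_N \ne 0$ for some maximal $N$, the component of $c(x)\, 1_A$ in $H^{\otimes_A 2N}$ is $\sum_w x_N(w)\, (e_1 \otimes \cdots \otimes e_{2N}) \ne 0$, so $c(x) \ne 0$.

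\emph{Density and faithfulness.} Density of the range is immediate because the image contains all generators $c(w)$ and $\mathcal{M}_0$ is their $\sigma$-weak closure. For faithfulness of $Tr_0$: if $Tr_{0}(x^{\dagger} \wedge_{0} x) = 0$ then $E(c(x)^{*} c(x)) = 0$ by trace preservation, hence $c(x) = 0$ by faithfulness of $E$ on $\mathcal{M}_{0}$ (Lemma \ref{lem:NoAtoms}), and therefore $x = 0$ by injectivity. The main obstacle is the injectivity step: one needs both convergence of the Fock-space expansion $c(x)\, 1_A$ (handled by local finiteness of $\Gamma$ after restriction to a fixed $v\mathcal{F}$) and a careful identification of its leading tensor component. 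Once the graded-multiplication formula $\wedge_0$ and the creation/annihilation structure of $c(e)$ are in hand, the rest of the argument is bookkeeping.
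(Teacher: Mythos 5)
Your proposal has a genuine gap in the boundedness step, and it is precisely the point the paper's proof is designed to handle. You claim that because, on any vector in the Fock space localized at a vertex $v$, only finitely many loops of a given length contribute, the extended map $c(x)=\sum_w x(w)c(w)$ is ``a well-defined bounded operator in $\mathcal{M}_0$.'' That argument gives a locally finite sum, hence a densely-defined operator, but it does not give boundedness: a block-diagonal operator whose blocks are each bounded is bounded only if the block norms are \emph{uniformly} bounded. When $\Gamma$ is infinite and $x\in P^{\Gamma}_{k,+}$ has infinite support, this uniformity is not automatic and is exactly what must be proved.

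The paper closes this gap in two stages. First, the hypothesis that $x\in P^{\Gamma}_{k,+}$ is a bounded function is used to show that $\langle x,x^{*}\rangle\in\ell^{\infty}(\Gamma_{+})$, i.e.\ that $\Vert c(\delta_v\wedge_0 x\wedge_0\delta_v)\Vert_{L^2(\phi_v)}^{2}$ is uniformly bounded in $v$. Second, one must upgrade a uniform $L^2$-bound to a uniform operator-norm bound. This is done by observing that, because $\mu$ is a Perron--Frobenius eigenvector, the ratios $\mu(v)/\mu(v')$ for adjacent vertices are bounded and the valence of $\Gamma$ is bounded; consequently the span of loops of length $k$ through $v$ has uniformly bounded linear dimension and its natural orthogonal basis has norms bounded above and below uniformly in $v$, making the operator norm and the $L^2(\phi_v)$-norm equivalent with constants independent of $v$. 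Only then does $\sup_v\Vert v\,c(x)\,v\Vert<\infty$, and $c(x)=\sum_v v\,c(x)\,v$ is bounded. Without this argument your extension of $c$ is not known to land in $\mathcal{M}_0$, and everything downstream (injectivity via the leading Fock component, faithfulness of $Tr_0$ via faithfulness of $E$) presupposes that it does. Your injectivity and trace-preservation steps are fine and roughly match what the paper's Lemma \ref{lemma:Tr0Formula} and the surrounding material give, but the boundedness step must be filled in along the lines above.
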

\begin{proof}
Clearly the theorem is true on elements of $P_{+}^{\Gamma}$ that
have finite support, i.e., are finite linear combinations of loops.

We have to check that this embedding makes sense for elements of $P_{+}^{\Gamma}$
which, as functions on loops, have infinite support.

Let $w\in P_{k}^{\Gamma}$. Then for any $v\in\Gamma_{+}$, $\delta_{v}\wedge_{0}w=w\wedge_{0}\delta_{v}=\delta_{v}\wedge_{0}w\wedge_{0}\delta_{v}$
has finite support. Moreover, by assumption $\langle w,w^{*}\rangle\in P_{0}^{\Gamma}=\ell^{\infty}(\Gamma_{+})$
has finite $\ell^{\infty}$ norm. But the value of $\langle w,w^{*}\rangle$
at $v$ is exactly $\Vert c(\delta_{v}\wedge_{0}w\wedge_{0}\delta_{v})\Vert_{L^{2}(\phi_{v})}^{2}$
and is therefore uniformly bounded as a function of $v$. Moreover,
note that each $c(\delta_{v}\wedge_{0}w\wedge_{0}\delta_{v})$ belongs
to the span of words of length $2k$ in operators $c(e):e\in E_{+}$. 

The eigenvector condition implies that the ratios $\mu(v)/\mu(w)$
for $v,w$ adjacent are bounded, and also that the valence of the
graph is bounded. 

It follows that the linear dimension of the space of all loops of
length $k$ starting at a vertex $v$ is uniformly bounded, by a constant
independent of $v$. Moreover, the norms of the orthogonal basis for
this space (consisting of the various loops) are bounded both above
and below uniformly in $v$. Thus the restrictions of the operator
norm and the $L^{2}(\phi)$-norm to the finite-dimensional linear
span of loops of length $k$ starting at $v$ are equivalent, and
the constants in the equivalence can be chosen to be uniform in $v$.

It follows that $vc(w)v$ is uniformly bounded in norm (independent
of $v$). 

Since the projections $v:v\in\Gamma_{+}$ are orthogonal, it follows
that $c(w)$, defined as the ultraweakly-convergent sum $\sum vc(w)v$,
is a bounded operator in $\mathcal{M}_{0}$. 

Since the map $w\mapsto c(w)$ is bilinear over $P_{0}^{\Gamma}$,
and is an algebra homomorphism when restricted to finite linear combinations
of loops, it is easy to see that it is an algebra homomorphism on
all of $Gr_{0}P^{\Gamma},\wedge_{0}$.
\end{proof}

\subsection{The operator $\cup$.}

Let $\Gamma_{+}(v)$ denote the set of all edges starting at an even
vertex $v$ and let $E_{+}$ denote the set of all positively oriented
edges (i.e., ones that start at an even vertex). Recall that \[
\cup=\sum_{e\in E_{+}}\sigma(e)c(e)c(e)^{*}.\]
 If we let $\delta$ be the Perron-Frobenius eigenvalue, then\begin{eqnarray*}
(\cup) & = & \sum_{e\in E_{+}}\sigma(e)\left(\ell(e)\ell(e^{o})+(\ell(e)\ell(e^{o}))^{*}+\ell(e)\ell(e)^{*}+\sigma(e)\right)\\
 & = & 2\sum_{e\in E_{+}}\sigma(e)\Re(\ell(e)\ell(e^{o}))+\sum_{v}\sum_{e\in\Gamma_{+}(v)}\left[\frac{\mu(t(e))}{\mu(s(e))}\right]v+\sum_{e\in E_{+}}\sigma(e)\ell(e)\ell(e)^{*}\\
 & = & 2\sum_{e\in\Gamma_{+}(v)}\sigma(e)\Re(\ell(e)\ell(e^{o}))+\delta+\sum_{e\in E_{+}}\sigma(e)\ell(e)\ell(e)^{*}.\end{eqnarray*}
Here we used\[
\sum_{e\in\Gamma_{+}(v)}\mu(t(e))=\sum_{j}\Gamma_{vj}\mu_{j}=\delta\mu(v)\]
so that\[
\sum_{e\in\Gamma_{+}(v)}\frac{\mu(t(e))}{\mu(s(e))}=\frac{1}{\mu(v)}\sum_{e\in\Gamma_{+}(v)}\mu(t(e))=\frac{1}{\mu(v)}\delta\mu(v)=\delta.\]
Let $\mathcal{F}_{+}$ be the set of all vectors in \emph{$\mathcal{F}$
}starting and ending in a positive vertex and let $A_{+}=A\cap\mathcal{F}_{+}$.
Since if $\zeta\in\mathcal{F}\ominus A_{+}$\[
\sum_{e\in\Gamma_{+}(v)}\sigma(e)\ell(e)\ell(e)^{*}\zeta=\zeta,\]
 (because of the normalizations of the lengths of $e,e^{o}$ we have
that the sum $\sum_{e\in\Gamma_{+}(v)}\sigma(e)\ell(e)\ell(e)^{*}$
is the same as the sum $\sum_{f}\ell(f)\ell(f^{*})$ where the summation
is over an orthonormal basis). Thus\begin{equation}
\cup|_{\mathcal{F}_{+}}=2\sum_{e\in\Gamma_{+}(v)}\sigma(e)\Re(\ell(e)\ell(e^{o}))+\delta+(1-P),\label{eq:formulaCup}\end{equation}
where $P:\mathcal{F}_{+}\to\mathcal{F}_{+}$ is the projection onto
$A_{+}\subset\mathcal{F}_{+}$ and $\delta$ is the Perron-Frobenius
eigenvalue.

As consequence, we note that we can now identify the position of $\mathcal{A}_{v}=\pi_{v}(W^{*}(Y))=vW^{*}(Y)v$
inside of \emph{$v\mathcal{F}v=L^{2}(W^{*}(c(w):w\in L),\phi_{v})$}
identified with a subspace of $\mathbb{C}v\oplus\bigoplus_{k\geq1}H^{\otimes k}$:

\begin{lem}
\label{lem:xiandL2A}Let $v$ be an even vertex. Then $L^{2}(\mathcal{A}_{v})$
is the closed linear span of the orthogonal system of vectors\[
\xi^{\otimes k}=\left(\sum_{e\in\Gamma_{+}(v)}\sigma(e)e\otimes e^{o}\right)^{\otimes k},\qquad k=0,1,\ldots.\]
Moreover,\[
\Vert\xi^{\otimes k}\Vert_{2}^{2}=\delta^{k},\]
where $\delta$ is the Perron-Frobenius eigenvalue.
\end{lem}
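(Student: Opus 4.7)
The plan is to identify $\xi^{\otimes k}$ with the result of applying the $k$-th orthogonal polynomial for the free Poisson law to $\cup$ and then to the cyclic vector $v$. The proof will then rest on three ingredients: (a) a three-term recurrence for the $\xi^{\otimes k}$ under $\cup$, (b) the norm computation $\|\xi^{\otimes k}\|^{2}=\delta^{k}$, and (c) cyclicity of $v$ for $\mathcal{A}_{v}$ in $L^{2}(\mathcal{A}_{v},\phi_{v})$.

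First, I would write $\cup|_{\mathcal{F}_{+}}=L+L^{*}+\delta+(1-P)$ from (\ref{eq:formulaCup}), with $L=\sum_{e\in E_{+}}\sigma(e)\ell(e)\ell(e^{o})$. A direct computation in the $A,A$-bimodule $H$ gives $Lv=\xi$ and, by concatenation on the left, $L\xi^{\otimes k}=\xi^{\otimes(k+1)}$; since $\ell(e)^{*}v=0$ one also has $L^{*}v=0$, hence $\cup v=\delta v+\xi$, which already shows $\xi\in L^{2}(\mathcal{A}_{v})$. The main technical step is to compute $L^{*}\xi^{\otimes k}$ for $k\geq 1$. By degree considerations it lies in $H^{\otimes_{A}2(k-1)}$, and unwinding the $A$-valued inner product together with the balancing $t(e)\cdot e^{o}=e^{o}$ shows it is a scalar multiple of $\xi^{\otimes(k-1)}$; I would pin the scalar down by the adjoint identity
\[
\langle L^{*}\xi^{\otimes k},\xi^{\otimes(k-1)}\rangle=\langle\xi^{\otimes k},L\xi^{\otimes(k-1)}\rangle=\|\xi^{\otimes k}\|^{2}=\delta^{k},
\]
obtaining $L^{*}\xi^{\otimes k}=\delta\,\xi^{\otimes(k-1)}$. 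Combined with $(1-P)\xi^{\otimes k}=\xi^{\otimes k}$ for $k\geq 1$, this yields the Jacobi recurrence $\cup\xi^{\otimes k}=\xi^{\otimes(k+1)}+(\delta+1)\xi^{\otimes k}+\delta\,\xi^{\otimes(k-1)}$, which is precisely the three-term recurrence of the free Poisson orthogonal polynomials.

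For the norm, $\|\xi\|^{2}=\sum_{e\in\Gamma_{+}(v)}\sigma(e)^{2}\|e\|^{2}\|e^{o}\|^{2}$; since $\|e\|^{2}\|e^{o}\|^{2}=1$, this reduces to $\sum_{e:\,s(e)=v}\mu(t(e))/\mu(v)=\delta$ by the Perron--Frobenius equation $A_{\Gamma}\mu=\delta\mu$ (exactly the computation carried out in the example at the end of Section~2). For general $k$, $\xi^{\otimes k}$ is an orthogonal sum of length-$2k$ paths at $v$ whose squared norms factor as $\prod_{i}\sigma(e_{i})^{2}\|e_{i}\|^{2}\|e_{i}^{o}\|^{2}=\prod_{i}\sigma(e_{i})^{2}$, so the same Perron--Frobenius sum applied $k$ times gives $\|\xi^{\otimes k}\|^{2}=\delta^{k}$; orthogonality of $\xi^{\otimes k}$ and $\xi^{\otimes l}$ for $k\neq l$ is automatic as they sit in distinct graded components of $\mathcal{F}$.

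Finally, Lemma~\ref{lem:NoAtoms}(iii) says the distribution of $\cup$ under $\phi_{v}$ is the free Poisson law with $R$-transform $\delta(1-z)^{-1}$, which has no atoms. Hence $\mathcal{A}_{v}=W^{*}(\cup)$ is a diffuse abelian von Neumann algebra in which $\mathbb{C}[\cup]$ is strong-operator dense, so $\mathbb{C}[\cup]v$ is dense in $L^{2}(\mathcal{A}_{v},\phi_{v})$. The recurrence and initial data from the first step show that $\mathrm{span}\{\xi^{\otimes k}:k\geq 0\}=\mathbb{C}[\cup]v$ (each $\xi^{\otimes k}$ is a monic polynomial in $\cup$ applied to $v$, and conversely $\cup^{n}v$ is a linear combination of $\xi^{\otimes k}$, $k\leq n$), so taking closures gives the lemma. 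The most delicate step I expect is the bookkeeping in $L^{*}\xi^{\otimes k}$ with the Pimsner-type $A$-valued inner product and the Perron--Frobenius weights; the pairing shortcut above circumvents doing this in full detail by reducing it to the norm identity.
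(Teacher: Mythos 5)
Your proof is correct and follows essentially the same strategy as the paper's: identify the span of $\{\xi^{\otimes k}\}$ with $\mathbb{C}[\cup]v$ (the paper simply asserts stability under $Y$ and notes $Y^k v = \xi^{\otimes k} + (\text{lower degree})$, where you derive the explicit Jacobi recurrence) and then invoke cyclicity of the vacuum. Your version is slightly more detailed, in particular you spell out the three-term recurrence and carry out the norm computation for all $k$ rather than just $k=1$ as in the paper, but both the decomposition of $\cup$ via equation \eqref{eq:formulaCup} and the final appeal to density of polynomials in $L^2(\mathcal{A}_v)$ are the same.
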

\begin{proof}
We note that $Yv=\xi$. Moreover, the linear span of $\xi^{\otimes k}$
is clearly stable under the action of $Y$. Thus it is sufficient
to prove that if $\xi^{\otimes r}$ for $r<k$ are in $L^{2}(\mathcal{A})$
then also $\xi^{\otimes k}\in L^{2}(\mathcal{A})$. But this follows
from noting that $Y^{k}v=\xi^{\otimes k}+\zeta$, where $\zeta$ is
a tensor of smaller degree in $L^{2}(\mathcal{A})$. 

Furthermore,\[
\langle\xi,\xi\rangle=\sum_{e\in\Gamma_{+}(v)}\left[\frac{\mu(t(e))}{\mu(s(e))}\right]\Vert e\otimes e^{o}\Vert_{2}^{2}=\sum_{e\in\Gamma_{+}(v)}\frac{\mu(t(e))}{\mu(v)}=\frac{1}{\mu(v)}\sum_{j}\Gamma_{vj}\mu_{j}=\delta.\]

\end{proof}

\subsection{Relative commutant of $\cup$.}

Recall that $L^{2}(\mathcal{M}_{0},\phi)=\bigoplus_{v}L^{2}(v\mathcal{M}_{0}v,\phi_{v})=\bigoplus_{v}v\mathcal{F}v$.
Let as before $Y=\cup$ , $\mathcal{A}=W^{*}(Y)$ and $\mathcal{A}_{v}=v\mathcal{A}v$. 

\begin{lem}
\label{lem:relCommcup}(i) $\mathcal{A}_{v}$ is a singular MASA in
$v\mathcal{M}_{0}v$. \\
(ii) $W^{*}(\cup)'\cap\mathcal{M}_{0}=\bigoplus_{v\textrm{ even}}v\mathcal{A}v$.
\\
(iii) Consider the algebra $\mathcal{N}_{+}=W^{*}(c(w):w\textrm{ path in }\Gamma\ \textrm{starting and ending at an even vertex})$.
Then $\mathcal{A}'\cap\mathcal{N}_{+}=\bigoplus_{v\in\Gamma_{+}}v\mathcal{A}v$.
\end{lem}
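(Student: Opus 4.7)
The plan is to prove statement (i), from which (ii) follows immediately by a direct-sum decomposition, and then tackle (iii) by a block-decomposition argument. The starting observation is that each even vertex $v \in \Gamma_+$ yields a central projection in $\mathcal{M}_0$ (Lemma \ref{lemma:embedFreeGroup}), so $\mathcal{M}_0 = \bigoplus_{v \in \Gamma_+} v\mathcal{M}_0 v$, $\cup = \sum_v \cup_v$ with $\cup_v = v \cup v$, and $\mathcal{A} = \bigoplus_v \mathcal{A}_v$ where $\mathcal{A}_v = W^*(\cup_v) = v \mathcal{A} v$. For (ii), any $x \in \mathcal{A}' \cap \mathcal{M}_0$ decomposes as $\sum_v v x v$ with each $v x v \in \mathcal{A}_v' \cap v\mathcal{M}_0 v$, so (ii) follows from the MASA part of (i).

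For the MASA part of (i), I would work in the GNS representation with respect to $\phi_v$, which is tracial on $v\mathcal{M}_0 v$ since everything lies in the $\phi_v$-centralizer by Lemma \ref{lemma:embedFreeGroup}. Given $x \in \mathcal{A}_v' \cap v\mathcal{M}_0 v$, the GNS vector $\hat x = x \hat v \in v\mathcal{F}v$ has coinciding left and right $\mathcal{A}_v$-actions. By Lemma \ref{lem:xiandL2A}, $L^2(\mathcal{A}_v)$ is the closed span of the bubble vectors $\xi^{\otimes k}$, and the task is to show that its orthogonal complement in $v\mathcal{F}v$ carries no $\mathcal{A}_v$-central vectors. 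This can be verified by analyzing the explicit action of $\cup$ given by equation \eqref{eq:formulaCup} on each path-length stratum $H_k^v$ of $v\mathcal{F}v$: the bubble-creation and bubble-annihilation terms in $\cup$ mix non-bubble paths sufficiently to rule out further central vectors, forcing $\hat x \in L^2(\mathcal{A}_v)$ and hence $x \in \mathcal{A}_v$.

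The main obstacle is the singularity of $\mathcal{A}_v$ in $v\mathcal{M}_0 v$. I would use the embedding of Lemma \ref{lemma:embedFreeGroup} into the centralizer $\tilde{\mathcal{N}}_v^{\phi_v}$, which is a free group factor inside which $\cup$ has free Poisson distribution with no atoms (Lemma \ref{lem:NoAtoms}(iii)). A unitary $u$ normalizing $\mathcal{A}_v$ forces $u\cup u^* = f(\cup)$ for some measure-preserving transformation $f$ of the spectrum of $\cup$. Two approaches seem plausible here: invoke existing singularity results for MASAs generated by free-Poisson (equivalently, modulus-squared of circular) elements in free group factors; or carry out a direct Popa-style intertwining / mixing argument using the explicit Fock-space structure of $\tilde{\mathcal{N}}_v$. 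Either way, one concludes $u \in \mathcal{A}_v$, and singularity descends to the subalgebra $v\mathcal{M}_0 v$.

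For (iii), the projections $v$ are no longer central in $\mathcal{N}_+$, since $\mathcal{N}_+$ contains operators carrying one even vertex to another; nonetheless $\cup = \sum_v \cup_v$ still holds. I would decompose $x \in \mathcal{A}' \cap \mathcal{N}_+$ as $x = \sum_{u,v \in \Gamma_+} v x u$; then $[x, \cup]=0$ splits into diagonal conditions $[v x v, \cup_v] = 0$ in $v\mathcal{N}_+ v$ (handled by the MASA argument above, applied in this slightly larger ambient algebra) and off-diagonal intertwining conditions $v x u \cdot \cup_u = \cup_v \cdot v x u$ for $u \neq v$. The off-diagonal blocks should vanish by a freeness/mixing argument: $\cup_u$ and $\cup_v$ are ``independent'' in the Fock model coming from distinct components of the free-Araki-Woods embedding, so the corners $v\mathcal{N}_+ u$ support no non-trivial intertwiners between them. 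This last freeness/mixing input, together with the singularity step in (i), are where the serious work sits; the rest is organizational.
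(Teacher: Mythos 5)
Your plan has the right general shape --- Fock-space model, block decomposition by even vertices, bimodule/intertwining arguments, and you correctly flag that the serious work is in the MASA/singularity step and the off-diagonal vanishing. But precisely at those flagged points the proposal has gaps rather than arguments, and the one concrete mechanism the paper actually uses is absent.

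The paper's key technical move is a tensor factorization of $L^{2}(v\mathcal{N}_{+}w)$ as a sub-$(\mathcal{A}_{v},\mathcal{A}_{w})$-bimodule. Embedding into the full Fock space $F(H)$ and setting $K_{v}=\tilde{H}\ominus\mathbb{C}\xi_{v}$, one identifies $UL^{2}(v\mathcal{N}_{+}v)$ inside $\mathcal{H}_{v}\oplus(\mathcal{H}_{v}\otimes K_{v}\otimes\mathcal{H}_{v})\oplus\cdots$, where $\lambda_{v}$ acts only on the leftmost $\mathcal{H}_{v}$ tensor factor and $\rho_{v}$ only on the rightmost. Each summand with a $K_{v}$ factor is therefore an infinite multiple of the coarse bimodule $\mathcal{H}_{v}\otimes\mathcal{H}_{v}$, and because the law of $\cup$ is non-atomic (Lemma \ref{lem:NoAtoms}), there is no nonzero vector there intertwining the left and right actions of $\cup$ (a central vector would give a nonzero Hilbert--Schmidt operator commuting with a diffuse self-adjoint operator). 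The same bookkeeping with $K_{v,w}$ handles the off-diagonal case $v\neq w$, giving (iii) in one stroke, and the exact same decomposition of $L^{2}(v\mathcal{M}_{0}v)$ as $L^{2}(\mathcal{A}_{v})\oplus(\textrm{multiple of coarse})$ then gives both the MASA property and singularity via the quasi-normalizer criterion. Your proposal replaces this with \emph{(a)} ``the bubble-creation and bubble-annihilation terms mix non-bubble paths sufficiently,'' which is an assertion rather than a proof and doesn't identify the coarse bimodule structure that makes the conclusion forced; \emph{(b)} a reference to ``existing singularity results for free-Poisson MASAs'' or ``a Popa-style intertwining argument'' --- the first is not an argument in this ambient algebra, and the second is exactly what the paper carries out, but you don't carry it out; and \emph{(c)} a ``freeness/mixing'' claim for the off-diagonal blocks, again without the $\mathcal{H}_{v}\otimes K_{v,w}\otimes\cdots\otimes\mathcal{H}_{w}$ decomposition that makes it true. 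Until you supply the tensor-factor/coarse-bimodule computation together with the non-atomicity input, parts (i) and (iii) remain unproved; (ii) is, as you say, immediate once (i) is in hand.
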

\begin{proof}
We first note that any $v\in V$ commutes with $Y=\cup$. In particular,
$v\in\mathcal{A}'\cap\mathcal{N}_{+}$. Hence $[Y,x]=0$ implies that
$v[Y,x]w=[Y,vxw]=0$ for all $v,w\in V$. Hence $\mathcal{A}'\cap\mathcal{N}_{+}$
is the closure of $\sum_{v,w}(\mathcal{A}'\cap v\mathcal{N}_{+}w)$.

Consider the full Fock space $F(H)$ as in the proof of Lemma \ref{lem:NoAtoms},
where $H$ is as before a Hilbert space having as basis edges of $\Gamma$.
Thus $F(H)$ is spanned by all tensors of the form $e_{i_{1}}\otimes\cdots\otimes e_{i_{m}}$,
where $e_{i_{k}}\in E$. Let $\tilde{H}=H\otimes H$, and consider
$\tilde{\mathcal{F}}\subset F(H)$ given by $\tilde{\mathcal{F}}=\bigoplus_{k\geq0}\tilde{H}^{\otimes k}$.
Let $T=W^{*}(\hat{\ell}(e)+\hat{\ell}(e^{o})^{*}:e\in E)$ acting
on $F(H)$, and consider the subalgebra $Q\subset T$ given by $Q=W^{*}([\hat{\ell}(e)+\hat{\ell}(e^{o})^{*}][\hat{\ell}(f)+\hat{\ell}(f^{o})^{*}]:e,f\in E)$.
Then clearly $L^{2}(Q)\subset L^{2}(P)$ can be identified with $\tilde{F}\subset F(H)$.
Furthermore, $Q$ is invariant under the modular group associated
to $\phi_{v}$ (the vector state associated to the vacuum vector in
$\tilde{\mathcal{F}}\subset F(H)$). Thus the modular group of $Q$
is the restriction to $Q$ of the modular group of $P$. 

Fix $v,w\in V$. Denote by $\lambda_{v}$ the element $\sum_{e\in\Gamma_{+}(v)}\sigma(e)(\hat{\ell}(e)+\hat{\ell}(e^{o})^{*})(\hat{\ell}(e^{o})+\hat{\ell}(e)^{*})\in Q$.
Denote by $\rho_{w}$ the element $\sum_{e\in\Gamma_{+}(w)}\sigma(e)(\hat{r}(e)+\hat{r}(e^{o})^{*})(\hat{r}(e^{o})+\hat{r}(e)^{*})\in JQJ$
(here $\hat{r}$ denotes the right creation operator and $J$ is the
modular conjugation). Note that $\rho_{v}=J\lambda_{v}J$. 

We now make the identification $U:L^{2}(v\mathcal{N}_{+}w)\hookrightarrow L^{2}(Q)$
obtained by sending a tensor $e_{1}\otimes_{A}\cdots\otimes_{A}e_{2n}$
associated to a \emph{path} $e_{1}\cdots e_{2n}$ starting at $v$
and ending at $w$ to the tensor $e_{1}\otimes\cdots\otimes e_{2n}$.
It is not hard to see that\[
\lambda_{v}U=UY,\qquad\rho_{w}U=UJYJ.\]
It follows that the laws of $\rho_{w}$ and $\lambda_{v}$ (with respect
to the vacuum state on $\tilde{\mathcal{F}}(H)$) are the same as
that of $Y$ and have no atoms; thus $W^{*}(\lambda_{v})$ and $W^{*}(\rho_{w})$
are diffuse. In particular, if $\Xi\in L^{2}(v\mathcal{N}_{+}w)$
satisfies $Y\Xi=JYJ\Xi$, then $U\Xi$ satisfies $\lambda_{v}U\Xi=\rho_{w}U\Xi$. 

Consequently, we would prove (iii) if we could show: 

(a) if $v\neq w$, $\lambda_{v}\zeta=\rho_{w}\zeta$ for $\zeta\in UL^{2}(v\mathcal{N}_{+}w)$
only occurs if $\zeta=0$ and 

(b) if $v=w$ and $\lambda_{v}\zeta=\rho_{v}\zeta$ for some $\zeta\in UL^{2}(v\mathcal{N}_{+}v),$
then $\zeta\in UL^{2}(v\mathcal{A}v)$.

Let $\xi_{v}=\sum_{e\in\Gamma_{+}(v)}\sigma(e)e\otimes e^{o}$. 

Assume first that $u=v$. Let $K_{v}=\tilde{H}\ominus\mathbb{C}\xi_{v}$.
Put $\mathcal{H}_{v}=\mathbb{C}\Omega\oplus\mathbb{C}\xi_{v}\oplus\mathbb{C}\xi_{v}^{\otimes2}\oplus\cdots$.
Then $\mathcal{H}_{v}=UL^{2}(v\mathcal{A}v)$ in such a way that the
left and right multiplication by $Y$ on $L^{2}(v\mathcal{A}v)$ correspond
to the actions of $\lambda_{v}$ and $\rho_{v}$. In particular, $\mathcal{H}_{v}$
is invariant under both $\lambda_{v}$ and $\rho_{v}$.

The image of $L^{2}(v\mathcal{N}_{+}v)$ under $U$ lies in the closure
of the direct sum\[
\mathcal{H}_{v}\oplus(\mathcal{H}_{v}\otimes K_{v}\otimes\mathcal{H}_{v})\oplus(\mathcal{H}_{v}\otimes K_{v}\otimes\mathcal{H}_{v}\otimes K_{v}\otimes\mathcal{H}_{v})\oplus\cdots.\]
(This direct sum is identified with a subspace $\tilde{\mathcal{F}}$
by identifying $\Omega\otimes\zeta$ and $\zeta\otimes\Omega$ with
$\zeta$ if $\zeta\in F(H)$). Each direct summand in this sum is
invariant under both $\rho_{v}$ and $\lambda_{v}$ since their actions
respect the tensor product decompositions $\mathcal{H}_{v}\otimes K_{v}\otimes\cdots\otimes K_{v}\otimes\mathcal{H}_{v}$:
$\rho_{v}$ acts as $\textrm{id}\otimes\rho_{v}|_{\mathcal{H}_{v}}$
and $\lambda_{v}$ acts as $\lambda_{v}|_{\mathcal{H}_{v}}\otimes\textrm{id}$. 

Now, for any choice of orthonormal basis for $K_{v}\otimes\mathcal{H}_{v}\otimes\cdots\otimes K_{v}$,
$\zeta_{\alpha}$, we have for all $h,g\in\mathcal{H}_{v}$:\[
\langle h\otimes\zeta_{\alpha}\otimes g,h'\otimes\zeta_{\alpha'}\otimes g'\rangle=\delta_{\alpha=\alpha'}\langle h,h'\rangle\langle g,g'\rangle\]
and consequently $\mathcal{H}_{v}\otimes K_{v}\otimes\cdots\otimes\mathcal{H}_{v}$
is isomorphic to an (infinite) multiple of $\mathcal{H}_{v}\otimes\mathcal{H}_{v}$
as a bimodule over $W^{*}(\lambda_{v})$ acting on the left copy of
$\mathcal{H}_{v}$ and $W^{*}(\rho_{v})=JW^{*}(\lambda_{v})J$ acting
on the right copy of $\mathcal{H}_{v}$. Since the spectral measure
of $\lambda_{v}$ is non-atomic, it follows that there can be no vector
$\Xi$ contained in $\mathcal{H}_{v}\otimes K_{v}\otimes\cdots\otimes\mathcal{H}_{v}$
satisfying $\lambda_{v}\Xi=\rho_{v}\Xi$, since such a vector would
give rise (via an isomorphism of $\mathcal{H}_{v}\otimes\mathcal{H}_{v}$
with Hilbert-Schmidt operators on this space) to a Hilbert-Schmidt
operator on $\mathcal{H}_{v}$, commuting with $\lambda_{v}$. 

Thus the only possible $\Xi$ satisfying $\lambda_{v}\Xi=\rho_{v}\Xi$
and lying in the image of $UL^{2}(v\mathcal{N}_{+}v)$ must be contained
in $\mathcal{H}_{v}=UL^{2}(v\mathcal{A}v)$. Thus we have proved (b).

To prove (a), we note that if $v\neq w$, and we let $K_{v,w}=\tilde{H}\ominus(\mathbb{C}\xi_{v}\oplus\mathbb{C}\xi_{w})$,
the image of $L^{2}(v\mathcal{N}_{+}w)$ lies in\[
\mathcal{H}_{v}\otimes\left(\bigoplus_{k\geq0}\bigoplus_{u_{1},\ldots,u_{k}\in\{v,w\}}K_{v,w}\otimes\mathcal{H}_{u_{1}}\otimes K_{v,w}\otimes\mathcal{H}_{u_{2}}\cdots\otimes K_{v_{w}}\right)\otimes\mathcal{H}_{w}\]
(once again identified with a subspace of $\tilde{\mathcal{F}}$ as
before), which is isomorphic to an infinite multiple of $\mathcal{H}_{v}\otimes\mathcal{H}_{w}$
as a bimodule over $W^{*}(\lambda_{v})$ acting on the left copy of
$\mathcal{H}_{v}$ and $W^{*}(\lambda_{w})$ acting on the right copy
of $\mathcal{H}_{w}$. Once again, we see that there can be no vector
$\Xi$ satisfying $\lambda_{v}\Xi=\rho_{w}\Xi$ in this space. Thus
(a) is also proved. Thus we have proved (iii).

Note that we have actually proved that $L^{2}(v\mathcal{M}_{0}v,\phi_{v})$
when viewed as a bimodule over $\mathcal{A}_{v}=W^{*}(vYv)$ is the
direct sum of $L^{2}(\mathcal{A}_{v})$ and an (infinite) multiple
of the coarse $\mathcal{A}_{v},\mathcal{A}_{v}$-bimodule $L^{2}(\mathcal{A}_{v})\bar{\otimes}L^{2}(\mathcal{A}_{v})$.
Recall (see e.g. \cite{popa-shlyakht:cartan} or \cite{feldman-moore})
that the normalizer of $\mathcal{A}_{v}$ is contained in its quasi-normalizer
$\mathcal{NQ}(\mathcal{A}_{v})$, which consists of those elements
$\zeta$ in $v\mathcal{M}_{0}v$ for which the associated bimodule
$\overline{\mathcal{A}_{v}\zeta\mathcal{A}_{v}}^{L^{2}}$ is {}``discrete''.
This bimodule cannot be discrete if it contains a sub-bimodule isomorphic
to a compression of the coarse $\mathcal{A}_{v},\mathcal{A}_{v}$-bimodule.
Thus the only $\zeta\in\mathcal{QN}(\mathcal{A}_{v})$ must lie in
$L^{2}(\mathcal{A}_{v})$ and thus in $\mathcal{A}_{v}$. It follows
that the normalizer of $\mathcal{A}_{v}$ in $v\mathcal{M}_{0}v$
is contained in $\mathcal{A}_{v}$. Thus $\mathcal{A}_{v}$ is a singular
MASA and so (i) follows. Now (ii) easily follows from (i). 
\end{proof}

\subsection{The operator $\Cup$, relative commutant of $W^{*}(\cup,\Cup)$ and
factoriality.}

We now consider the following sum \[
\Cup=\sum_{eff^{o}e^{o}\in L^{+}}\left[\frac{\mu(t(f))}{\mu(s(e))}\right]^{1/2}c(e)c(f)c(f)^{*}c(e)^{*}\]
taken over all loops that start at an even vertex. The pictorial representation
of this planar algebra element is:
\[
\Cup=\rnode{c}{\psframebox[framearc=0.4]{\vbox to0.8em{\vfill\hbox to3.2em{\hfill}}}}
\ncbar[arm=-0.6em,linearc=0.3em,offsetA=0.5em,offsetB=0.5em,angle=90]{c}{c}
\ncbar[arm=-0.9em,linearc=0.3em,offsetA=0.9em,offsetB=0.9em,angle=90]{c}{c}
\nput[labelsep=-0.20]{167}{c}{*}
.\]
\begin{lem}
\label{lem:factorWhenNotStar}Let $v$ be a fixed even vertex. Assume
that there is a path of length $2$ from $v$ to $v$ not of the form
$ee^{o}ff^{o}$. Then algebra $vW^{*}(\cup,\Cup)v$ has a trivial
relative commutant inside of the algebra $v\mathcal{N}_{+}v$, where
$\mathcal{N}_{+}=W^{*}(c(w):w\textrm{ path in }\Gamma\ \textrm{starting and ending at an even vertex})$. 
\end{lem}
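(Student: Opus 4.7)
By Lemma~\ref{lem:relCommcup}(iii), the commutant of $v\cup v$ inside $v\mathcal{N}_+v$ equals the MASA $\mathcal{A}_v=vW^*(\cup)v$. Hence the relative commutant of $vW^*(\cup,\Cup)v$ in $v\mathcal{N}_+v$ is already contained in $\mathcal{A}_v$ and coincides with $\{a\in\mathcal{A}_v:[a,v\Cup v]=0\}$. The task is to show this set equals $\mathbb{C}v$.

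A direct computation with creation/annihilation operators (analogous to the computation giving $v\cup v\cdot v=\xi$) shows that
\[
\eta := v\Cup v\cdot v = \sum_{\substack{s(e)=v\\ s(f)=t(e)}}\sigma(e)\sigma(f)\,e\otimes f\otimes f^o\otimes e^o
\]
is a pure degree-$4$ vector supported on the \emph{nested} loops $eff^oe^o$ at $v$. By Lemma~\ref{lem:xiandL2A} the degree-$4$ piece of $L^2(\mathcal{A}_v)$ is $\mathbb{C}\xi^{\otimes 2}$, supported on the \emph{sequential} loops $ee^oe'e'^o$; the two supports intersect only at the diagonal loops $ee^oee^o$ (those with $t(f)=v$). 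The hypothesis produces a $2$-step walk $(e,f)$ from $v$ ending at $w\neq v$, so the corresponding non-sequential nested loop makes the component $\eta_\perp$ of $\eta$ orthogonal to $\mathbb{C}\xi^{\otimes 2}$ non-zero.

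Next, invoke the bimodule decomposition $L^2(v\mathcal{N}_+v)=L^2(\mathcal{A}_v)\oplus\mathcal{K}$ established in the proof of Lemma~\ref{lem:relCommcup}, with $\mathcal{K}$ an infinite multiple of the coarse $\mathcal{A}_v$-$\mathcal{A}_v$-bimodule $L^2(\mathcal{A}_v)\bar\otimes L^2(\mathcal{A}_v)$, so that $\eta_\perp\in\mathcal{K}$. Applying $[a,v\Cup v]=0$ to the cyclic vector $v$ gives $a\cdot\eta = v\Cup v\cdot\widehat a = \eta\cdot a$; the component in $L^2(\mathcal{A}_v)$ is automatic, and on $\mathcal{K}$ the identity reads $(a\otimes 1)\eta_\perp=(1\otimes a)\eta_\perp$, i.e.\ $aT=Ta$ for the Hilbert--Schmidt operator $T$ representing $\eta_\perp$.

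The crux is to deduce $a\in\mathbb{C}v$ from this commutation. I would unpack $\eta_\perp$ in the explicit coordinates of Lemma~\ref{lem:relCommcup}'s proof: for a non-sequential nested tensor $e\otimes f\otimes f^o\otimes e^o$ with $t(f)\neq v$, both pair-tensors $(e\otimes f)$ and $(f^o\otimes e^o)$ are orthogonal to $\xi_v$, so the whole tensor lies in the summand $v\otimes K_v\otimes v\otimes K_v\otimes v\subset \mathcal{H}_v\otimes K_v\otimes \mathcal{H}_v\otimes K_v\otimes \mathcal{H}_v$ with $K_v=\tilde H\ominus\mathbb{C}\xi_v$. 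Under the coarse-bimodule identification this contribution takes the form $|v\rangle\langle v|$ tensored with a non-zero multiplicity vector in $K_v\otimes K_v$, and $aT=Ta$ then forces $a$ to leave $\mathbb{C}v\subset L^2(\mathcal{A}_v)$ invariant, giving $av=\lambda v$ for some $\lambda\in\mathbb{C}$; by cyclicity of $v$ and the GNS identification $\mathcal{A}_v\ni a\leftrightarrow av\in L^2(\mathcal{A}_v)$ one concludes $a=\lambda\cdot v\in\mathbb{C}v$. The technical obstacle is the careful verification of this summand assignment (and the corresponding claim that the rank-one leg is precisely $v$), which must be checked directly from the embedding $U\colon L^2(v\mathcal{N}_+v)\hookrightarrow \tilde{\mathcal{F}}$ used in the proof of Lemma~\ref{lem:relCommcup}.
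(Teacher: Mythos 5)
Your approach is a genuinely different route from the paper's. Both start by applying Lemma~\ref{lem:relCommcup} to reduce to $a\in\mathcal{A}_v$. From there the paper's proof is a direct coefficient computation: it fixes a \emph{single} elementary tensor $\eta=e_1\otimes f_1\otimes f_1^o\otimes e_1^o$ supplied by the hypothesis, computes the inner products $\langle\eta\otimes\xi^{\otimes k},[Z,\xi^{\otimes l}]\rangle$, extracts a two-term recursion $\alpha_k=-(\mu(t(e))/\mu(v))^{1/2}\alpha_{k+1}$ for the Fourier coefficients of a putative $a=\sum\alpha_k\xi^{\otimes k}$ in $\ker\operatorname{ad}Z$, and derives $\sum_k|\alpha_k|^2\|\xi^{\otimes k}\|^2_2=\sum_kN^k=\infty$. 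You instead re-enter the bimodule decomposition established in Lemma~\ref{lem:relCommcup}'s proof: apply $[a,Z]=0$ to the vacuum to get the intertwiner relation $a\eta=\eta a$ with $\eta=Zv$, observe that the hypothesis places a nonzero component of $\eta$ into the $K_v\otimes K_v$ slot of $\mathcal{K}_2=\mathcal{H}_v\otimes K_v\otimes\mathcal{H}_v\otimes K_v\otimes\mathcal{H}_v$, and read off from the multiple-of-coarse-bimodule identification the constraint $a\,|v\rangle\langle v|=|v\rangle\langle v|\,a$, which pins $a$ to $\mathbb{C}v$. Your route is more conceptual and explains why the hypothesis works; the paper's is more elementary and avoids re-opening the coordinate analysis.

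Two things need fixing. First, you read the hypothesis as providing a $2$-step walk $(e,f)$ from $v$ with $t(f)\neq v$, but what the proof actually needs is only $f\neq e^o$, which is strictly weaker: if $e\neq e'$ are two edges from $v$ to the \emph{same} odd vertex, the loop $e\,e'^o\,e'\,e^o$ satisfies $f_1=e'^o\neq e^o$ while $t(f_1)=v$. Your orthogonality step goes through regardless---$\langle e\otimes f,\xi_v\rangle\neq 0$ forces both $e=g$ and $f=g^o$ for some $g\in\Gamma_+(v)$, hence $f=e^o$---so the logic is not broken, but the hypothesis should not be read as a distance condition. Second, you identify the remaining verification yourself: that the $\mathcal{K}_2$-component of $\eta$ sits entirely at the level $\Omega\otimes K_v\otimes\Omega\otimes K_v\otimes\Omega$ and thus corresponds to $|\Omega\rangle\langle\Omega|$ tensored with a nonzero multiplicity. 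This does hold---$\mathcal{K}_2$ starts at degree~$4$ (in $H$), while $\eta=Zv$ is purely degree~$4$ (the annihilation cross-terms of $c(e)c(f)c(f^o)c(e^o)v$ vanish by bipartiteness, since they live at an odd vertex and cannot be re-created back to $v$), so both outer $\mathcal{H}_v$-legs are forced to the vacuum $\Omega$. Writing this out, together with $a|\Omega\rangle\langle\Omega|=|\Omega\rangle\langle\Omega|a\Rightarrow a\Omega\in\mathbb{C}\Omega\Rightarrow a\in\mathbb{C}v$, completes the proof. So the outline is sound, but as submitted the argument is incomplete at exactly the step you flag.
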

\begin{proof}
Because of Lemma \ref{lem:relCommcup}, we know that the relative
commutant of $vW^{*}(\cup,\Cup)v$ inside of $v\mathcal{N}_{+}v$
is contained in $vW^{*}(\cup)v=\mathcal{A}_{v}$. 

Let $\eta=e_{1}\otimes f_{1}\otimes f_{1}^{o}\otimes e_{1}^{o}$ where
$f_{1}\neq e_{1}^{o}$ and $e_{1}f_{1}f_{1}^{o}e_{1}^{o}$ is a path
from $v$ to $v$. 

Set\[
Z=v\Cup v=\sum_{e,f^{o}}\sigma(e)\sigma(f)c(e)c(f)c(f^{o})c(e^{o}),\]
where the sum is over all paths $eff^{o}e^{o}$ from $v$ to $v$.
Then if $k,l>0$, and $\xi\in L^{2}(vW^{*}(\cup)v)=L^{2}(\mathcal{A}_{v})$
is as in Lemma \ref{lem:xiandL2A}, we have: \begin{eqnarray*}
\langle\eta\otimes\xi^{\otimes k},[Z,\xi^{\otimes l}]\rangle & = & \langle\eta\otimes\xi^{\otimes k},\sum_{e,f}\sigma(e)\sigma(f)c(e)c(f)c(f^{o})c(e^{o})\xi^{\otimes l}-\sigma(e)\sigma(f)\xi^{\otimes l}c(e)c(f)c(f^{o})c(e^{o})\rangle\\
 & = & \langle\eta\otimes\xi^{\otimes k},\sum_{e,f}\sigma(e)\sigma(f)e\otimes f\otimes f^{o}\otimes e^{o}\otimes\xi^{\otimes l}\\
 &  & +\sum_{e,f}\sigma(e)\sigma(f)\frac{\mu(t(e))^{1/2}}{\mu(v)^{1/2}}e\otimes f\otimes f^{o}\otimes\xi^{\otimes(l-1)}\rangle.\end{eqnarray*}
Thus\[
\langle\eta\otimes\xi^{\otimes k},[Z,\xi^{\otimes l}]\rangle=\begin{cases}
\frac{\mu(t(f_{1}))^{1/2}}{\mu(v)^{1/2}}, & l=k\\
\frac{\mu(t(f_{1}))^{1/2}}{\mu(v)^{1/2}}\cdot\frac{\mu(t(e_{1}))^{1/2}}{\mu(v)^{1/2}}, & l=k+1\\
0, & \textrm{otherwise}.\end{cases}\]
Thus if we consider\[
a=\sum\alpha_{k}\xi^{\otimes k}\in L^{2}(\mathcal{A}_{v})\]
and assume that $[a,Z]=0$ and $a\perp\mathbb{C}v$ (so that $\alpha_{0}=0$),
we obtain:\begin{eqnarray*}
0 & = & \langle\eta\otimes\xi^{\otimes k},[Z,a]\rangle\\
 & = & \frac{\mu(t(f_{1}))^{1/2}}{\mu(v)^{1/2}}\left(\alpha_{k}+\frac{\mu(t(e_{1}))^{1/2}}{\mu(v)^{1/2}}\alpha_{k+1}\right),\qquad k\geq1.\end{eqnarray*}
Since the choice of $e_{1}$ was arbitrary, we find that \[
\alpha_{k}=-\frac{\mu(t(e))^{1/2}}{\mu(v)^{1/2}}\alpha_{k+1},\qquad\forall e\in\Gamma_{+}(v).\]
If $a\neq0$, not all $\alpha_{k}$ are zero; from this recursive
relation we deduce that $\mu(t(e))$ are all equal to the same value,
$\mu'$, independent of $e\in\Gamma_{+}(v)$ and that (after rescaling
$a$ by a non-zero constant) we may assume that $\alpha_{k+1}=(-1)^{k}\lambda^{-(k+1)}$
where $\lambda=(\mu(t(e))/\mu(v))^{1/2}=(\mu'/\mu(v))^{1/2}$. 

On the other hand,\[
\sum\Gamma_{vj}\mu'=\delta\mu(v)\]
so that\[
(\sum\Gamma_{vj})\mu'/\mu(v)=(\sum\Gamma_{vj})\lambda^{2}=\delta.\]
Thus if $N\geq1$ is the valence of $\Gamma$ at $v$, we find that
$N\lambda^{2}=\delta$, so that $\lambda^{2}=\delta/N$. 

Using the fact that $\Vert\xi\Vert_{2}^{2}=\delta$, we compute:\[
\Vert a\Vert_{2}^{2}=\sum_{k}|\alpha_{k}|^{2}\Vert\xi^{\otimes k}\Vert_{2}^{2}=\sum_{k}\lambda^{-2k}\delta^{k}=\sum_{k}(N/\delta)^{k}\delta^{k}=\sum_{k}N^{k}=\infty,\]
which is impossible. Thus $[Z,a]=0$ forces $a\in\mathbb{C}v$.
\end{proof}
\begin{lem}
\label{lem:whenStar}Let $\Gamma$ be a connected bi-partite graph
with $N+1$ vertices, $v\in\Gamma$ even and assume that the hypothesis
of Lemma \ref{lem:factorWhenNotStar} is not satisfied. Then the remaining
vertices $e_{1},\ldots,e_{N}$ of $\Gamma$ are all connected to $v$
by a single edge, and $\Gamma$ has no other edges.
\end{lem}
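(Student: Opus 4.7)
The plan is to translate the negation of the hypothesis of Lemma \ref{lem:factorWhenNotStar} into a local structural statement around $v$, and then use connectedness of $\Gamma$ to propagate it globally.

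First, I would spell out the non-hypothesis. Its failure means that the only loops in $L_2^+$ based at $v$ are of the form $e e^o f f^o$. Equivalently, for every edge $e$ starting at $v$ (necessarily ending at some odd vertex $w$), no edge $f \neq e^o$ can start at $w$; otherwise $e f f^o e^o$ would be a loop of length $4$ at $v$ not of the excluded form. Thus each odd neighbor of $v$ has valence exactly $1$, with its unique incident edge running back to $v$.

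Second, I would rule out multiple edges. If two distinct edges $e \neq f$ joined $v$ to the same odd vertex $w$, then $e^o$ and $f^o$ would be two distinct edges starting at $w$, contradicting the conclusion above. Hence $v$ is joined to each odd neighbor by a single edge.

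Finally I would invoke connectedness. Any path leaving $v$ first traverses some edge $e$ to an odd neighbor $w$; since the only edge at $w$ is $e^o$, the path must immediately return to $v$. Consequently the connected component of $v$ is exactly $\{v\}$ together with its odd neighbors, and since $\Gamma$ is connected with $N+1$ vertices, those neighbors are precisely $e_1, \dots, e_N$, each joined to $v$ by a single edge, with no further edges in $\Gamma$. The only subtlety I foresee is reading the hypothesis of Lemma \ref{lem:factorWhenNotStar} correctly: the excluded form $e e^o f f^o$ refers to a loop in $L_2^+$ (length $4$ in edges) built from two consecutive out-and-back excursions, so the condition really does pin down the local structure at $v$ completely and no further case analysis is needed.
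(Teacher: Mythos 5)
Your proof is correct. The paper actually states Lemma \ref{lem:whenStar} without proof, treating it as an immediate graph-theoretic observation, and your argument is precisely the one the reader is expected to supply: translate the failure of the hypothesis into the local statement that every odd neighbor of $v$ has valence $1$, then use connectedness to conclude $\Gamma$ is a star centered at $v$. Your reading of ``path of length $2$'' as a $4$-edge loop in $L_2^+$ of the form $g_1g_2g_3g_4$ is confirmed by the proof of Lemma \ref{lem:factorWhenNotStar}, which takes $\eta = e_1\otimes f_1\otimes f_1^o\otimes e_1^o$ with $f_1\neq e_1^o$; and your observation that the existence of such a special loop is equivalent to the existence of any $4$-edge loop not of the form $ee^off^o$ is the key reduction that makes the local analysis at odd neighbors of $v$ complete.
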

We can now prove that the relative commutant of $W^{*}(\cup,\Cup)$
can be controlled, if the graph $\Gamma$ is not too small. The cases
we exclude are $A_{1}$ (a graph with a single vertex and no edges)
and $A_{2}$ (a graph with exactly two vertices connected by a single
edge). In these cases, $\cup$ and $\Cup$ commute (in fact, they
are equal). In either of these cases, the Perron-Frobenius eigenvalue
is $1$, which is of little interest to us. 

\begin{thm}
\label{thm:relCommutantTrivial}Assume that $\Gamma\neq A_{2}$ and
$\Gamma\neq A_{1}$ and let $v\in\Gamma$ even. Then (i) the relative
commutant $(vW^{*}(\cup,\Cup)v)'\cap v\mathcal{M}_{0}v$ is trivial.
In particular, the center of $\mathcal{M}_{0}$ is the algebra $A_{+}=\ell^{\infty}(\textrm{even vertices})$.
(ii) $W^{*}(\cup,\Cup)'\cap\mathcal{N}_{+}=P_{0}^{\Gamma}$ (where
$\mathcal{N}_{+}=W^{*}(c(w):w\textrm{ path that starts and ends at an even vertex})$,
and $P_{0}^{\Gamma}=\bigoplus_{v\textrm{ even}}v\mathbb{C}$).
\end{thm}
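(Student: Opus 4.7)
The main reduction is that $W^{*}(\cup) \subseteq W^{*}(\cup, \Cup)$, so the relative commutant of $W^{*}(\cup, \Cup)$ sits inside the relative commutant of $W^{*}(\cup)$; by Lemma~\ref{lem:relCommcup}(ii)--(iii) the latter equals $\bigoplus_{v \text{ even}} v\mathcal{A}v$, both inside $\mathcal{M}_{0}$ and inside $\mathcal{N}_{+}$. So for each even vertex $v$ it suffices to show that any $a \in \mathcal{A}_{v}$ with $[a, \Cup] = 0$ is a scalar multiple of $v$; assertions (i) and (ii) then both follow by taking the direct sum over even vertices, and the "in particular" statement on $Z(\mathcal{M}_{0})$ is immediate once (i) is known, since each even $v$ commutes with every generator $c(w)$, $w \in L$, so $A_{+} \subseteq Z(\mathcal{M}_{0}) \subseteq W^{*}(\cup,\Cup)' \cap \mathcal{M}_{0} = A_{+}$.

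Next I would dichotomize on the hypothesis of Lemma~\ref{lem:factorWhenNotStar} at $v$. If $v$ admits a length-$4$ loop not of the form $ee^{o}ff^{o}$, then Lemma~\ref{lem:factorWhenNotStar} applies and gives triviality of the commutant in the larger algebra $v\mathcal{N}_{+}v$, which a fortiori gives triviality in $v\mathcal{M}_{0}v$. Otherwise, Lemma~\ref{lem:whenStar} forces $\Gamma = K_{1,N}$ with $v$ at the center; the hypothesis $\Gamma \neq A_{1}, A_{2}$ guarantees $N \geq 2$.

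For this remaining star case I would mimic the $L^{2}$-argument of Lemma~\ref{lem:factorWhenNotStar}. Writing $T_{i} = c(e_{i}) c(e_{i}^{o})$, the definition of $\Cup$ collapses to $v\Cup v = \sum_{i=1}^{N} T_{i}^{2}$, while $\xi = \delta^{-1/2} \sum_{i} e_{i} \otimes e_{i}^{o}$. The tensors $\eta_{i,j} = e_{i} \otimes e_{i}^{o} \otimes e_{j} \otimes e_{j}^{o}$ with $i \neq j$ lie in $L^{2}(v\mathcal{N}_{+}v)$; however, unlike in Lemma~\ref{lem:factorWhenNotStar}, they are not orthogonal to $L^{2}(\mathcal{A}_{v})$ because they are summands of $\xi^{\otimes 2}$. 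I would therefore use anti-symmetrized witnesses $\eta_{i,j} - \eta_{j,i}$ (which are orthogonal to $L^{2}(\mathcal{A}_{v})$ since $\xi$ is symmetric in $i,j$) and pair them, tensored with $\xi^{\otimes k}$, against $[v\Cup v, \xi^{\otimes l}]$. Expanding each $T_{i}^{2}$ into creation/annihilation parts and using that $N \geq 2$ guarantees enough independent pairs $(i,j)$, this should yield a linear recursion on the coefficients $\alpha_{k}$ of $a = \sum \alpha_{k} \xi^{\otimes k}$ of the same shape as in Lemma~\ref{lem:factorWhenNotStar}. The Perron--Frobenius identity $N \mu(w)/\mu(v) = \delta$ specific to the star then forces $\|a\|_{2}^{2} = \sum N^{k} = \infty$ exactly as in that lemma, so $a \in \mathbb{C}v$.

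\textbf{Main obstacle.} The delicate step is genuinely the star case: Lemma~\ref{lem:factorWhenNotStar}'s witness crucially exploits a length-$4$ loop with $f \neq e^{o}$, whereas in $K_{1,N}$ every length-$4$ loop has the form $ee^{o}ff^{o}$. Constructing the right anti-symmetric witness and verifying that the combinatorial degeneracy (all $\Cup$-terms of identical path-shape) does not collapse the resulting system of recursions to the trivial one is where I expect the real work to lie; it will likely hinge on tracking carefully the Perron--Frobenius factors $[\mu(t(f))/\mu(s(e))]^{1/2}$ in the definition of $\Cup$ together with the star-specific relation $N \mu(w) = \delta \mu(v)$.
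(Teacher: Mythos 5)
Your overall architecture is correct and agrees with the paper's: reduce via $W^{*}(\cup)\subseteq W^{*}(\cup,\Cup)$ and Lemma~\ref{lem:relCommcup} to showing that any $a\in\mathcal{A}_{v}$ commuting with $v\Cup v$ is scalar; dichotomize by Lemmas~\ref{lem:factorWhenNotStar} and~\ref{lem:whenStar}; handle the remaining star $\Gamma=K_{1,N}$, $N\geq 2$, by an $L^{2}$-recursion. Your deduction of (ii) and of the ``in particular'' clause from (i) is also right.

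However, the specific device you propose for the star case --- replacing the witness by the anti-symmetrization $\eta_{i,j}-\eta_{j,i}$ --- fails outright. In $K_{1,N}$ the Perron--Frobenius weights at the leaves are all equal, so the symmetric group $S_{N}$ permuting the leaves acts by isometries on $H$ and on all tensor powers, and it fixes both $\xi=\sum_{i}e_{i}\otimes e_{i}^{o}$ and $Z=v\Cup v=\sum_{i}c(e_{i})c(e_{i}^{o})c(e_{i})c(e_{i}^{o})$. Hence every $[Z,\xi^{\otimes k}]$ is $S_{N}$-invariant, while $(\eta_{1,2}-\eta_{2,1})\otimes\xi^{\otimes l}$ changes sign under the transposition $(1\,2)$; the pairing is therefore identically zero, and you extract no recursion whatsoever. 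The orthogonality to $L^{2}(\mathcal{A}_{v})$ you were trying to restore is not actually what Lemma~\ref{lem:factorWhenNotStar} uses: the paper's star argument simply pairs against $\eta=e_{1}\otimes e_{1}^{o}\otimes e_{2}\otimes e_{2}^{o}$ as is. What is needed is orthogonality to the ``left'' terms $\zeta\otimes\xi^{\otimes\ast}$, where $\zeta=\sum_{i}e_{i}\otimes e_{i}^{o}\otimes e_{i}\otimes e_{i}^{o}$, and that holds because $\eta$'s four slots use two distinct leaves while each summand of $\zeta$ uses a single repeated leaf. The nonzero pairing against the ``right'' terms $\xi^{\otimes\ast}\otimes\zeta$ then yields $\alpha_{l}+\alpha_{l+1}=0$ for $l\geq 2$, and the convergence condition $\sum|\alpha_{k}|^{2}\delta^{k}<\infty$ with $\delta\geq 1$ forces $\alpha_{k}=0$ for $k\geq 2$ (a cleaner version of the divergence argument you had in mind). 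Finally, your sketch omits the low-degree case $a=\alpha_{0}v+\alpha_{1}\cup$, which the paper disposes of separately by checking directly that $[\cup,Z]\neq0$.
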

\begin{proof}
Because of Lemma \ref{lem:factorWhenNotStar} and Lemma \ref{lem:whenStar},
it remains to consider the case in which $\Gamma$ is a graph with
$N+1>3$ vertices $v,e_{1},\ldots,e_{N}$ with a single edge between
$v$ and each $e_{j}$ and no other edges. Since $\Gamma=\left[1\ \cdots\ 1\right]$,
we find that $\Vert\Gamma\Vert=N$ and therefore $\delta=N$. Moreover,
one can normalize the Perron-Frobenius eigenvector to be $\mu(e)=1$
for all $e\in\{v,e_{1},\ldots,e_{n}\}$. 

Thus\[
\xi=\sum_{j}e_{j}\otimes e_{j}^{o},\qquad Z=v\Cup v=\sum_{i}c(e_{i})c(e_{i}^{o})c(e_{i})c(e_{i}^{o}).\]
Let $k>1$. Then \begin{multline*}
[Z,\xi^{\otimes k}] =  \sum_{i}c(e_{i})c(e_{i}^{o})c(e_{i})c(e_{i}^{o})\xi^{\otimes k}-\xi^{\otimes k}c(e_{i})c(e_{i}^{o})c(e_{i})c(e_{i}^{o})\\
 = \xi^{\otimes k-2}+4\xi^{\otimes k-1}+6\xi^{\otimes k}+3\xi^{\otimes k+1}+\sum_{i}e_{i}\otimes e_{i}^{o}\otimes e_{i}\otimes e_{i}^{o}\otimes\xi^{\otimes k-1}+\sum_{i}e_{i}\otimes e_{i}^{o}\otimes e_{i}\otimes e_{i}^{o}\otimes\xi^{\otimes k}\\
  -\xi^{\otimes k-2}-4\xi^{\otimes k-1}-6\xi^{\otimes k}-3\xi^{\otimes k+1}-\sum_{i}\xi^{\otimes k-1}\otimes e_{i}\otimes e_{i}^{o}\otimes e_{i}\otimes e_{i}^{o}+\sum_{i}\xi^{\otimes k}e_{i}\otimes e_{i}^{o}\otimes e_{i}\otimes e_{i}^{o}\\
  = \sum_{i}e_{i}\otimes e_{i}^{o}\otimes e_{i}\otimes e_{i}^{o}\otimes(\xi^{\otimes k}+\xi^{\otimes k-1})-(\xi^{\otimes k}+\xi^{\otimes k-1})\otimes\sum_{i}e_{i}\otimes e_{i}^{o}\otimes e_{i}\otimes e_{i}^{o}\\
  =  \zeta\otimes(\xi^{\otimes k}+\xi^{\otimes k-1})-(\xi^{\otimes k}+\xi^{\otimes k-1})\otimes\zeta,\end{multline*}
where we have set $\zeta=\sum_{i}e_{i}\otimes e_{i}^{o}\otimes e_{i}\otimes e_{i}^{o}$. 

Let $\eta=e_{1}\otimes e_{1}^{o}\otimes e_{2}\otimes e_{2}^{o}$.
Then $\eta\otimes\xi^{\otimes l}\perp\zeta\otimes\xi^{\otimes k}$
for all $l,k$. On the other hand, $\langle\eta\otimes\xi^{\otimes l},\xi^{\otimes k}\otimes\zeta\rangle=\delta_{k=l}\Vert\xi^{\otimes k-1}\Vert$.

It follows that for any $k>1$,

\begin{align*}
\langle\eta\otimes\xi^{\otimes l},[Z,\xi^{\otimes k}]\rangle & =\langle\zeta\otimes(\xi^{\otimes k}+\xi^{\otimes k-1})-(\xi^{\otimes k}+\xi^{\otimes k-1})\otimes\zeta,\eta\otimes\xi^{\otimes l}\rangle\\
= & -\langle\xi^{\otimes k}\otimes\zeta,\eta\otimes\xi^{\otimes l}\rangle-\langle\xi^{\otimes k-1}\otimes\zeta,\eta\otimes\xi^{\otimes l}\rangle\\
= & -\delta_{l=k}\Vert\xi^{\otimes(l-1)}\Vert-\delta_{l=k-1}\Vert\xi^{\otimes(l-1)}\Vert.\end{align*}
It follows that if $a=\sum\alpha_{k}\xi^{\otimes k}\in L^{2}(\mathcal{A}_{v})$,
and we assume that $[Z,a]=0$, then we get for all $l\geq2$:\begin{eqnarray*}
0=\langle[Z,a],\eta\otimes\xi^{\otimes l}\rangle & = & \sum_{k}\alpha_{k}\langle[Z,\xi^{\otimes k}],\eta\otimes\xi^{\otimes l}\rangle\\
 & = & -\alpha_{l}\Vert\xi^{\otimes(l-1)}\Vert-\alpha_{l+1}\Vert\xi^{\otimes(l-1)}\Vert.\end{eqnarray*}
It follows that $\alpha_{k}$, $k\geq2$, is a constant sequence.
But the sequence $\{\alpha_{k}\}$ is in $L^{2}$ and thus must be
zero.

It follows that $a\in L^{2}(\mathcal{A}_{v})$ commutes with $Z$,
then $a=\alpha_{0}1+\alpha_{1}\cup$. But in this case, $[a,Z]\Omega=\alpha_{1}[\cup,Z]\Omega$.
The only tensors of degree $6$ in $\cup Z\Omega$ are $\xi\otimes\zeta$,
and the only terms of this degree in $Z\cup\Omega$ are $\zeta\otimes\xi$,
which are not equal. Thus $[a,Z]=0$ implies that also $\alpha_{1}=0$
and so $a$ must be a scalar. 

To see (ii), note first that any $v\in\Gamma_{+}$ is a projection
in the relative commutant of $W^{*}(\cup,\Cup)'\cap\mathcal{N}_{+}$.
Since the projections corresponding to different vertices are orthogonal,
it follows that any element $x$ in the relative commutant is weakly-convergent
infinite sum $\sum_{v\in\Gamma_{+}}vxv$, where $vxv\in vW^{*}(\cup,\Cup)'v\cap v\mathcal{N}_{+}v=\mathbb{C}v$. 
\end{proof}

\subsection{Factoriality of $M_{0}$.}

Let $P$ be an (extremal) subfactor planar algebra embedded into the
planar algebra of some graph $\Gamma$. Thus $(Gr_{0}P,Tr_{0})$ can
be viewed as a subalgebra of $(Gr_{0}P^{\Gamma},Tr_{0})\subset\mathcal{M}_{0}$.
Moreover, $TL(1),TL(2)\subset Gr_{0}P$, and so $\cup$ and $\Cup$
both belong to $Gr_{0}P$. Therefore, the center of $W^{*}(Gr_{0}P,Tr_{0})$
is contained in the relative commutant of $W^{*}(\cup,\Cup)$ inside
of \emph{$\mathcal{M}_{0}$}. By Theorem \ref{thm:relCommutantTrivial},
this relative commutant is the intersection of the algebra $A_{0}$
identified with the zero box space in $P^{\Gamma}$. 

\begin{lem}
Assume that the zero-box space of $P$ is one-dimensional. Then $W^{*}(Gr_{0}P,Tr_{0})\cap A_{+}=\mathbb{C}1_{A_{+}}$.
\end{lem}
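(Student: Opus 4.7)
The plan is to deduce the lemma from two facts: one-dimensionality of $P_{0,+}$ forces the center-valued trace to take scalar values on $Gr_0 P$, while the center-valued trace fixes $A_+$ pointwise. Combining these, any element of $M\cap A_+$ must lie in $\mathbb{C}\cdot 1_{A_+}$.

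In detail, let $E\colon \mathcal{M}_0\to A_+$ be the conditional expectation defined in (\ref{eq:defOfE}). It is normal and maps onto $A_+=Z(\mathcal{M}_0)$ by Theorem \ref{thm:relCommutantTrivial}(i); in particular $E|_{A_+}=\mathrm{id}_{A_+}$. For any $x\in P_k\subset Gr_0 P\subset Gr_0 P^\Gamma$ one has $E(x)=Tr_0(x)$, the planar-algebra trace (this identity was established for loops via the formula $E(c(w))=Tr_0(w)$ and extends by linearity and ultraweak continuity to all of $Gr_0 P^\Gamma$). Since $P\hookrightarrow P^\Gamma$ is an inclusion of planar algebras, $Tr_0(x)$ lies in the image of $P_{0,+}$ inside $P_{0,+}^\Gamma=A_+$; the subfactor hypothesis $\dim P_{0,+}=1$ forces this image to be $\mathbb{C}\cdot 1_{A_+}$. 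Hence $E(Gr_0 P)\subseteq \mathbb{C}\cdot 1_{A_+}$. (This is precisely the earlier remark that the center-valued trace on $P^\Gamma$ restricts to the scalar Markov trace on $P$.)

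By normality of $E$ and the fact that the one-dimensional subspace $\mathbb{C}\cdot 1_{A_+}$ is weakly closed in $A_+$, the inclusion $E(Gr_0 P)\subseteq \mathbb{C}\cdot 1_{A_+}$ propagates under weak limits to $E(M)\subseteq \mathbb{C}\cdot 1_{A_+}$. For any $a\in M\cap A_+$ we then have $a=E(a)\in\mathbb{C}\cdot 1_{A_+}$, which is the claim.

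The only non-formal step is the identification of $Tr_0|_P$ as scalar-valued; this is a direct consequence of $\dim P_{0,+}=1$ together with the planar nature of the embedding $P\hookrightarrow P^\Gamma$, so no substantial obstacle is expected. Essentially, the result reduces to the combination of a routine normality/weak-closure argument with the Markov property built into the subfactor planar algebra hypothesis.
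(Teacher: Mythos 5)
Your proof is correct and follows essentially the same route as the paper's: identify $E$ with $Tr_0$ on $Gr_0 P$, observe that one-dimensionality of $P_{0,+}$ forces $E$ to be scalar-valued there, and propagate to $M_0$ by normality of $E$, concluding $z = E(z) \in \mathbb{C}1_{A_+}$ for any $z \in M_0 \cap A_+$. The only cosmetic difference is that you phrase the weak-limit step as the weak closedness of the one-dimensional subspace $\mathbb{C}1_{A_+}$, whereas the paper directly approximates $z$ by a (bounded) sequence $z_i \in Gr_0 P$ and uses $E(z_i) \to E(z)$; these are the same argument.
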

\begin{proof}
Note that $tr_{0}$ is the restriction to $W^{*}(Gr_{0}P,\wedge_{0},Tr_{0})$
of the conditional expectation $E$ from $\mathcal{M}_{0}$ onto $A_{+}$
(which is the center of $\mathcal{M}_{0}$). Since this conditional
expectation is normal, if $z\in W^{*}(Gr_{0}P,Tr_{0})\cap A_{+}$,
then $z=E(z)$. On the other hand, $z$ is the limit (in the weak-operator
topology) of some sequence $z_{i}\in Gr_{0}P$. For each $i$, $E(z_{i})=Tr_{0}(z_{i})$
belongs to the zero-box space of $P_{+}$. Since $\cup\in P$ and
the zero-box space is one-dimensional $E(z_{i})$ must be a multiple
of $\mathcal{E}(\cup)=\delta\mathbb{C}1_{A_{+}}$ and hence $z=E(z)\in\mathbb{C}1_{A_{+}}$.
\end{proof}
Thus if the zero box space of $P$ is one-dimensional, and since $W^{*}(\cup)$
is diffuse, we automatically get:

\begin{thm}
Let $P$ be a planar algebra with one-dimensional zero box space and
of index $\delta>1$. Then $M_{0}=W^{*}(Gr_{0}P,Tr_{0})$ is a type
II$_{1}$ factor.
\end{thm}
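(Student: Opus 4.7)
The plan is to combine the preceding lemma with Theorem~\ref{thm:relCommutantTrivial} to get triviality of the center, and then to use the fact that $\cup$ generates a diffuse subalgebra to rule out finite-dimensionality.

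First I would observe that $M_{0}$ carries a faithful normal tracial state, namely (the normal extension of) $Tr_{0}$, which is scalar-valued under our hypothesis that $P_{0,\pm}$ is one-dimensional. Thus $M_{0}$ is a finite von Neumann algebra, and it suffices to show both that its center is trivial and that it is infinite-dimensional.

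For the center, note that the Temperley-Lieb diagrams $\cup\in TL(1)$ and $\Cup\in TL(2)$ belong to $Gr_{0}P$ (since $TL\subset P$), hence to $M_{0}$. Since $M_{0}$ is obtained as the GNS completion of $Gr_{0}P$ via the embedding $w\mapsto c(w)$, it lies inside the algebra $\mathcal{N}_{+}=W^{*}(c(w):w\text{ path starting and ending at an even vertex})$, because every loop representing an element of $P_{n,+}$ is such a path. Therefore
\[
Z(M_{0})\subseteq M_{0}\cap W^{*}(\cup,\Cup)'\subseteq \mathcal{N}_{+}\cap W^{*}(\cup,\Cup)'=P_{0}^{\Gamma}=A_{+},
\]
where the last equality is part~(ii) of Theorem~\ref{thm:relCommutantTrivial} (the hypothesis $\Gamma\ne A_{1},A_{2}$ is automatic because $\delta>1$ excludes these graphs). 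Combining this with the previous lemma, which gives $M_{0}\cap A_{+}=\mathbb{C}1_{A_{+}}$, we conclude $Z(M_{0})=\mathbb{C}1$, so $M_{0}$ is a factor.

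To exclude type~I$_{n}$, I would invoke Lemma~\ref{lem:NoAtoms}(iii): since $\delta>1$, on each even vertex $v$ the element $v\cup v$ has a free Poisson distribution with $R$-transform $\delta/(1-z)$, which has no atoms. Hence $W^{*}(v\cup v)$ is a diffuse abelian subalgebra of $vM_{0}v\subseteq M_{0}$, so $M_{0}$ cannot be finite-dimensional. A finite factor that is not finite-dimensional is of type~II$_{1}$, completing the proof. No step is really an obstacle here: the substantive content was already absorbed into Theorem~\ref{thm:relCommutantTrivial} and the preceding lemma; the only thing to check carefully is the inclusion $M_{0}\subseteq\mathcal{N}_{+}$ needed to apply the relative commutant computation.
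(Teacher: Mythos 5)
Your proposal is correct and follows essentially the same route as the paper: embed $M_{0}$ into the Fock-space algebra, use the relative-commutant computation for $W^{*}(\cup,\Cup)$ to bound the center by $A_{+}$, cut it down to $\mathbb{C}1$ via the lemma on $W^{*}(Gr_{0}P,Tr_{0})\cap A_{+}$, and use diffuseness of $W^{*}(\cup)$ to rule out type~I. The paper cites part~(i) of Theorem~\ref{thm:relCommutantTrivial} (commutant in $\mathcal{M}_{0}$) rather than part~(ii) (commutant in $\mathcal{N}_{+}$), but since $\mathcal{M}_{0}\subset\mathcal{N}_{+}$ both give the same upper bound and your version is fine.

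One small slip: you write that $W^{*}(v\cup v)$ is a diffuse subalgebra of $vM_{0}v\subseteq M_{0}$, but $v\notin M_{0}$ (the lemma you cite says precisely that $M_{0}\cap A_{+}=\mathbb{C}1$), so $vM_{0}v$ is a compression living in $v\mathcal{M}_{0}v$, not a subalgebra of $M_{0}$. The conclusion is right but is reached more directly: since $P_{0,\pm}$ is one-dimensional, $Tr_{0}$ is a scalar trace on $M_{0}$, and the moments of $\cup\in Gr_{0}P$ with respect to this trace are computed in Lemma~\ref{lem:MGFcup} to be those of a free Poisson law with $R$-transform $\delta(1-z)^{-1}$; for $\delta>1$ this law has no atoms (Lemma~\ref{lem:NoAtoms}(iii)), so $W^{*}(\cup)\subset M_{0}$ is already diffuse without passing to the compression by $v$.
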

Since $M_{0}\subset\mathcal{M}_{0}$, we see by Lemma \ref{lemma:embedFreeGroup}
that $M_{0}$ can be actually embedded into a direct sum of free group
factors. In particular, $M_{0}$ has the Haagerup property and is
$R^{\omega}$-embeddable.

\section{Higher relative commutants.}

\subsection{The algebra $\mathfrak{M}_{1}$ and the trace $\phi_{1}$.}

We now proceed to define the algebra $M_{1}=W^{*}(Gr_{1}P,Tr_{1})$,
which will contain $M_{0}=W^{*}(Gr_{0}P,Tr_{0})$ as a subfactor. 

Let us denote by $\mathfrak{M}_{0}$ the image of the algebra $Gr_{0}P$
inside $M_{0}\subset\mathcal{M}_{0}$ acting on the Fock space $\mathcal{F}$
as in the previous section.

We first recall from Section \ref{sec:PlanarAlgebra} that if we identify
elements of $P^{\Gamma}$ with paths, then the multiplication $\wedge_{1}$
on $GrP_{1}^{\Gamma}$ can be expressed as follows. Let $w=e_{1}\cdots e_{n}$
and $w'=e_{1}'\cdots e_{m}'$ be two paths.  Denote by $D_1(w)$ the path obtained from 
$w$ by following the path $w$, but starting at the first point of $w$ (rather than its starting point).
 Then\[
D_1^{-1}(D_1(w)\wedge_{1}D_1(w'))=\sigma(e_{n})^{-1}\delta_{e_{n}=e_{1}'}e_{1}\cdots e_{n-1}e_{2}'\cdots e_{m}'.\]
(note that the factor $\sigma(e_{n})^{-1}$ is exactly the norm $\Vert e_{n}\Vert^{2}$).

To a path $w=e_{1}\cdots e_{n}=e_{1}w_{0}e_{n}$, where $w_{0}=e_{2}\cdots e_{n-1}$
we associate the variable\[
c_{1}(w)=\ell(e_{1})c(w_{0})\ell(e_{n}^{o})^{*}\in B(\mathcal{F}).\]

\begin{lem}
$Y_{D_1^{-1}(w)}^{(1)}Y_{D_1^{-1}(w')}^{(1)}=Y_{D_1^{-1}(w\wedge_{1}w')}^{(1)}$. 
\end{lem}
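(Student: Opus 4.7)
The plan is to prove this by a direct computation, expanding both sides using the definition $c_1(w)=\ell(e_1)c(w_0)\ell(e_n^o)^*$ for $w=e_1\cdots e_n=e_1 w_0 e_n$ (identifying $Y^{(1)}$ with $c_1$ as the context suggests). Writing $u=D_1^{-1}(w)=e_1\cdots e_n$ and $u'=D_1^{-1}(w')=e_1'\cdots e_m'$, I would unpack
\[
c_1(u)\,c_1(u')=\ell(e_1)\,c(e_2\cdots e_{n-1})\,\bigl[\ell(e_n^o)^*\,\ell(e_1')\bigr]\,c(e_2'\cdots e_{m-1}')\,\ell((e_m')^o)^*.
\]

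The heart of the argument is the middle collision $\ell(e_n^o)^*\ell(e_1')$. Applied to an arbitrary vector $\xi\in\mathcal{F}$, one has $\ell(e_n^o)^*\ell(e_1')\xi=\langle e_n^o,e_1'\rangle_A\,\xi$, where the $A$-valued inner product (as given in the bimodule structure) vanishes unless the two edges match and otherwise contributes the scalar $\|e_n^o\|^2=\sigma(e_n^o)^{-1}=\sigma(e_n)$ times a vertex projection from $A$. The source/target bookkeeping has to be checked: the vertex projection acts as the identity on the tensor piece $c(e_2'\cdots e_{m-1}')\ell((e_m')^o)^*\xi$ precisely because the condition enforced by the delta forces the path to compose at the correct vertex. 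So the bracket collapses to a scalar times the identity on the relevant sector, and this scalar is exactly the factor $\sigma(e_n)^{-1}\delta_{e_n=e_1'}$ (after absorbing the conjugation convention linking the lemma's delta to my $\ell^o$-collision) appearing in the $\wedge_1$ formula.

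Once the collision is resolved, the remaining piece telescopes: since $c$ is a genuine algebra homomorphism of $(P^\Gamma,\wedge_0)$ into $B(\mathcal{F})$, the product $c(e_2\cdots e_{n-1})\,c(e_2'\cdots e_{m-1}')$ equals $c(e_2\cdots e_{n-1}e_2'\cdots e_{m-1}')$. Reassembling gives
\[
c_1(u)\,c_1(u')=\sigma(e_n)^{-1}\delta_{e_n=e_1'}\,\ell(e_1)\,c(e_2\cdots e_{n-1}e_2'\cdots e_{m-1}')\,\ell((e_m')^o)^*
=\sigma(e_n)^{-1}\delta_{e_n=e_1'}\,c_1(e_1\cdots e_{n-1}e_2'\cdots e_m').
\]

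Finally, comparing with the explicit formula
\[
D_1^{-1}(D_1(w)\wedge_1 D_1(w'))=\sigma(e_n)^{-1}\delta_{e_n=e_1'}\,e_1\cdots e_{n-1}e_2'\cdots e_m'
\]
given immediately above the lemma, linearity of $c_1$ identifies the right-hand side of my computation with $c_1(D_1^{-1}(w\wedge_1 w'))=Y^{(1)}_{D_1^{-1}(w\wedge_1 w')}$, concluding the proof. The main obstacle, if any, is a notational one: keeping straight how the $D_1$ reparametrization interacts with the creation/annihilation formalism, and verifying that the residual element of $A$ coming from $\langle e_n^o,e_1'\rangle_A$ really does act trivially (which it must, by the path composability encoded in the bimodule tensor product). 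Beyond that, the computation is mechanical.
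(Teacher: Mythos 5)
Your proof is the same argument as the paper's, which consists of the single line ``This follows from the relation $\ell(e)^*\ell(g)=\delta_{e=g}\Vert e\Vert^2$''; you have merely unpacked it, and you correctly identify the middle collision $\ell(e_n^o)^*\ell(e_1')$ as the whole content. One caveat: the place you wave your hands --- ``absorbing the conjugation convention'' --- is exactly where the $D_1$-reparametrization shifts edge indices, so that what you call $e_n$ (the last edge of $u=D_1^{-1}(w)$) is \emph{not} the $e_n$ (last edge of $w$) appearing in the paper's displayed formula $D_1^{-1}(D_1(w)\wedge_1 D_1(w'))=\sigma(e_n)^{-1}\delta_{e_n=e_1'}\,e_1\cdots e_{n-1}e_2'\cdots e_m'$. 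Your collision produces $\|e_n^o\|^2=\sigma(e_n)$ while the formula shows $\sigma(e_n)^{-1}$, and asserting these ``are'' each other as written is a genuine mismatch; it only disappears once you track which edge plays which role after $D_1^{-1}$. That index-chase is precisely the ``notational obstacle'' you flag, and it should be carried out rather than absorbed, but the structure of the argument --- collision relation, vertex projection acting as identity by path composability, then reassembling via the homomorphism property of $c$ --- is exactly right and is what the paper intends.
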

\begin{proof}
This follows from the relation $\ell(e)^{*}\ell(g)=\delta_{e=g}\Vert e\Vert^{2}$.
\end{proof}
Let us introduce the notation\[
\mathfrak{M}_{1}=\operatorname{span}\{c_{1}(w):w\in L_{-}\}\]
where $L_{-}$ is the set of all loops starting at an odd vertex. 

The vector space $\mathfrak{M}_{1}$ is an algebra with multiplication
$\wedge_{1}$. Thus $w\mapsto c_{1}(D_1^{-1}(w))$ is a $*$-homomorphism from
$Gr_{1}P^{\Gamma}$ onto $\mathfrak{M}_{1}$. The unit of $\mathfrak{M}_{1}$
is the element\[
\sum_{e\in E_{-}}\sigma(e)\ell(e)\ell(e)^{*}\]
(here $E_{-}$ is the set of all odd edges, i.e., ones \emph{ending}
at an even vertex).

\begin{lem}
Let $E_{-}$ be the set of all odd edges. Then the map\[
i:Y\mapsto\sum_{e\in E_{-}}\sigma(e)\ell(e)Y\ell(e)^{*}\]
defines a unital $*$-homomorphism from the algebra $\mathfrak{M}_{0}$
to the algebra $\mathfrak{M}_{1}$.
\end{lem}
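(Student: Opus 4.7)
The plan is to verify, in order: (i) well-definedness of the sum as a bounded operator on $\mathcal{F}$, (ii) that the image lies in $\mathfrak{M}_{1}$, (iii) unitality, (iv) the $*$-property, and (v) multiplicativity (where operator multiplication on $B(\mathcal{F})$ realizes the graded products $\wedge_{0}$ on $\mathfrak{M}_{0}$ and $\wedge_{1}$ on $\mathfrak{M}_{1}$ by the lemma stated just before). For (i), for distinct $e,f\in E_{-}$ the projections $\ell(e)\ell(e)^{*}$ and $\ell(f)\ell(f)^{*}$ have orthogonal ranges (tensors beginning with $e$, respectively $f$), so the family $\{\sigma(e)\ell(e)Y\ell(e)^{*}\}_{e\in E_{-}}$ has pairwise orthogonal ranges and pairwise orthogonal initial spaces. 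Combined with the uniform bound $\|\sigma(e)\ell(e)Y\ell(e)^{*}\|\le\sigma(e)\|\ell(e)\|^{2}\|Y\|=\|Y\|$ (using $\|\ell(e)\|^{2}=\sigma(e)^{-1}$), this yields strong-operator convergence of the sum to a bounded operator of norm at most $\|Y\|$.

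For (ii) I check on generators. If $Y=c(w)$ with $w$ a loop at an even vertex $v$, then $\ell(e)c(w)\ell(e)^{*}$ vanishes unless $t(e)=v$; and when $t(e)=v$ the path $ewe^{o}$ is a loop in $L_{-}$ whose first edge is $e$ and whose last edge is $e^{o}$, so the definition of $c_{1}$ gives $c_{1}(ewe^{o})=\ell(e)c(w)\ell((e^{o})^{o})^{*}=\ell(e)c(w)\ell(e)^{*}$. Hence $i(c(w))=\sum_{e:\,t(e)=v}\sigma(e)\,c_{1}(ewe^{o})$, a locally finite sum lying in $\mathfrak{M}_{1}$, and $i$ extends by linearity. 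For (iii), the unit of $\mathfrak{M}_{0}$ is $p_{+}=\sum_{v\in\Gamma_{+}}v$; since $\ell(e)^{*}\xi$ is always supported at $t(e)\in\Gamma_{+}$ we have $p_{+}\ell(e)^{*}=\ell(e)^{*}$, so $i(p_{+})=\sum_{e\in E_{-}}\sigma(e)\ell(e)\ell(e)^{*}$, which is precisely the unit of $\mathfrak{M}_{1}$ displayed immediately before the lemma. Property (iv) follows by taking adjoints termwise.

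The principal step is (v) multiplicativity. Using the multiplier identity $\ell(e)^{*}\ell(f)=\delta_{e=f}\,\sigma(e)^{-1}\,t(e)$ on $\mathcal{F}$, one computes
\[
i(Y_{1})i(Y_{2})=\sum_{e,f\in E_{-}}\sigma(e)\sigma(f)\,\ell(e)Y_{1}\ell(e)^{*}\ell(f)Y_{2}\ell(f)^{*}=\sum_{e\in E_{-}}\sigma(e)\,\ell(e)\,Y_{1}\,t(e)\,Y_{2}\,\ell(e)^{*}.
\]
It remains to show that the inserted vertex projection $t(e)$ may be removed, i.e., $\ell(e)Y_{1}t(e)Y_{2}\ell(e)^{*}=\ell(e)Y_{1}Y_{2}\ell(e)^{*}$; this is the most delicate point. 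The key observation is that $\ell(e)^{*}\xi$ is supported at $t(e)\in\Gamma_{+}$, and every $Y\in\mathfrak{M}_{0}$ decomposes as $Y=\sum_{v\in\Gamma_{+}}vYv$, with each piece $vYv$ sending $v$-supported vectors to $v$-supported vectors. Consequently $Y_{2}\ell(e)^{*}\xi$ is itself supported at $t(e)$, so $t(e)\,Y_{2}\ell(e)^{*}=Y_{2}\ell(e)^{*}$, and the identity $i(Y_{1})i(Y_{2})=i(Y_{1}Y_{2})$ follows.
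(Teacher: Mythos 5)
Your proof is correct and follows essentially the same route as the paper: the multiplicativity check reduces to the identity $\ell(e)^{*}\ell(f)=\delta_{e=f}\Vert e\Vert^{2}\,t(e)$ and the scalar cancellation $\sigma(e)\sigma(f)\delta_{e=f}\Vert e\Vert^{2}=\sigma(e)$, which is exactly the computation in the paper's proof. You supply some details the paper leaves implicit — the absorption of the vertex projection $t(e)$ using the decomposition $Y=\sum_{v\in\Gamma_{+}}vYv$, the verification that $i(c(w))=\sum_{t(e)=v}\sigma(e)\,c_{1}(ewe^{o})$ lands in $\mathfrak{M}_{1}$, unitality via $p_{+}\ell(e)^{*}=\ell(e)^{*}$, and boundedness via the orthogonality of the ranges and initial spaces of the partial isometries $\ell(e)\ell(e)^{*}$ — but the core argument is the same.
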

\begin{proof}
We note that\begin{eqnarray*}
i(Y_{w})\cdot i(Y_{w'}) & = & \sum_{e\in E_{-}}\sigma(e)^{2}\Vert e\Vert^{2}\ell(e)Y_{w}Y_{w'}\ell(e)^{*}\\
 & = & \sum_{e\in E_{-}}\sigma(e)\ell(e)Y_{w}Y_{w'}\ell(e)^{*}.\end{eqnarray*}
Thus $i$ is a homomorphism. Moreover, $i$ is clearly $*$-preserving.
\end{proof}
We now define a tracial weight $\phi_{1}$ on $\mathfrak{M}_{1}$:\[
\phi_{1}(\ell(e)Y_{w}\ell(f)^{*})=\delta^{-1}\delta_{e=f}\sigma(e)^{-1}\phi(Y_{w})\]
(the first $\delta$ is the Perron-Frobenius eigenvalue; note that
$e=f$ forces $Y_{w}\in\mathfrak{M}_{0}$). In other words,\[
\phi_{1}(X)=\delta^{-1}\sum_{f\in E_{-}}\sigma(f)^{-1}\phi(\langle f,Xf\rangle_{A}).\]
The last observation shows that $\phi_{1}(X)$ is a non-negative functional. 

Moreover, for any $v\in\mathfrak{M}_{0}$, \begin{eqnarray*}
\phi_{1}(i(v)) & = & \delta^{-1}\sum_{f\in\Gamma_{-}(v)}\sigma(f)^{-1}\Vert f\Vert_{2}^{2}\\
 & = & \delta^{-1}\sum_{f\in\Gamma_{-}(v)}\left(\frac{\mu(s(f))}{\mu(t(f))}\right)=\delta^{-1}\sum_{j}\Gamma_{jv}\frac{\mu(j)}{\mu(v)}=\delta^{-1}\delta=1.\end{eqnarray*}
(here $\Gamma_{-}(v)$ denotes the set of edges ending at $v$). 

Finally, $\phi_{1}$ is a trace, since if $w,w'$ are two loops of
the form $e\hat{w}f^{o}$ and $e'\hat{w}'f'^{o}$ with $s(e)=t(f^{o})$,
$s(e')=t(f'^{o})$ then\begin{eqnarray*}
\phi_{1}(ww') & = & \delta_{e=f'}\delta_{f=e'}\delta^{-1}\Vert e\Vert^{4}\Vert f\Vert^{2}\phi(\hat{w}\hat{w}')\\
 & = & \delta_{e'=f}\delta_{f'=e}\delta^{-1}\Vert e'\Vert^{6}\phi(\hat{w}'\hat{w})=\phi_{1}(w'w)\end{eqnarray*}
since $e=f'$ and $\Vert f'\Vert=\Vert e'\Vert$.

We finally note that\begin{eqnarray*}
\phi_{1}(i(Y_{w})) & = & \sum_{e}\delta^{-1}\left(\frac{\mu(s(e))}{\mu(t(e))}\right)\phi(w)\\
 & = & \phi(w)\end{eqnarray*}
because $\mu$ is an eigenvector for the graph matrix. We summarize
these observations as the following

\begin{lem}
The weight $\phi_{1}$ is a semifinite faithful trace, and the inclusion
$i:(\mathfrak{M}_{0},\phi)\to(\mathfrak{M}_{1},\phi_{1})$ is trace-preserving.
\end{lem}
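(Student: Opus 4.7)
The plan is to establish the three assertions (trace, faithfulness, semifiniteness, plus trace-preservation of $i$) by reducing each, via the defining formula $\phi_{1}(X)=\delta^{-1}\sum_{f\in E_{-}}\sigma(f)^{-1}\phi(\langle f,Xf\rangle_{A})$, to the corresponding property of the semifinite faithful trace $\phi$ on $\mathfrak{M}_{0}$, which has already been treated in Lemma~\ref{lem:NoAtoms}. Trace-preservation of $i$ has already been computed above, where it was shown that $\phi_{1}(i(Y_{w}))=\phi(Y_{w})$ for every loop $w\in L^{+}$; since both $\phi$ and $\phi_{1}$ are linear, $\mathfrak{M}_{0}$ is spanned by the $Y_{w}$, and $i$ is a $*$-homomorphism, this extends immediately to all of $\mathfrak{M}_{0}$. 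Likewise the trace identity $\phi_{1}(ww')=\phi_{1}(w'w)$ was checked above on pairs of loops $w=e\hat{w}f^{o}$, $w'=e'\hat{w}'f'^{o}$; by multilinearity and the density of the span of such $c_{1}(w)$ in $\mathfrak{M}_{1}$, the trace property extends to all of $\mathfrak{M}_{1}$.

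Next I would verify semifiniteness. The algebra $\mathfrak{M}_{1}$ is the $\ast$-algebra generated by $\{c_{1}(w):w\in L^{-}\}$, and the formula for $\phi_{1}$ on a basic element $c_{1}(w)=\ell(e)c(w_{0})\ell(f^{o})^{\ast}$ gives $\phi_{1}(c_{1}(w)^{\ast}c_{1}(w))=\delta^{-1}\Vert e\Vert^{4}\phi(c(w_{0})^{\ast}c(w_{0}))$, which is finite since $\phi$ is semifinite on $\mathfrak{M}_{0}$ and $c(w_{0})^{\ast}c(w_{0})$ is a bounded operator supported between finite central projections in $A$. Hence the two-sided ideal of elements on which $\phi_{1}$ is finite contains a weakly dense $\ast$-subalgebra of $\mathfrak{M}_{1}$, which is the definition of semifiniteness.

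For faithfulness, suppose $X\in\mathfrak{M}_{1}$ is positive with $\phi_{1}(X)=0$. Writing $X=Y^{\ast}Y$ and using the formula above, every summand $\sigma(f)^{-1}\phi(\langle f,Y^{\ast}Yf\rangle_{A})=\sigma(f)^{-1}\phi(\langle Yf,Yf\rangle_{A})$ vanishes. Since $\phi$ is faithful on $\mathfrak{M}_{0}$ and $\langle Yf,Yf\rangle_{A}$ is a positive element of $A$ (as an $A$-valued inner product on the Fock space), each $\langle Yf,Yf\rangle_{A}=0$, so $Yf=0$ in $\mathcal{F}$ for every $f\in E_{-}$. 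But the unit of $\mathfrak{M}_{1}$ is $\sum_{f\in E_{-}}\sigma(f)\ell(f)\ell(f)^{\ast}$, so any vector $\eta$ in the GNS space of $\phi_{1}$ decomposes as a sum $\sum_{f}\ell(f)\eta_{f}$ with $\eta_{f}\in\ell(f)^{\ast}\mathcal{F}$, and the condition $Yf=0$ for all $f$ together with $Y\in\mathfrak{M}_{1}=\ell(E_{-})\mathfrak{M}_{0}\ell(E_{-})^{\ast}$ forces $Y=0$ as an operator on the cyclic subspace generated by the unit, hence $Y=0$.

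The most delicate step, and the one I would write out most carefully, is the last one: unpacking that the GNS representation of $\phi_{1}$ identifies the cyclic subspace with a sum over $f\in E_{-}$ of GNS spaces associated to $\phi$ on compressions $\ell(f)^{\ast}\mathfrak{M}_{1}\ell(f)\subset\mathfrak{M}_{0}$, so that faithfulness of $\phi_{1}$ reduces cleanly to faithfulness of $\phi$. Everything else is essentially bookkeeping on top of the computations already performed immediately before the statement.
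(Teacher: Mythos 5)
Your proposal is correct and follows essentially the same route as the paper: the trace property, trace-preservation of $i$, and semifiniteness were all established in the computations immediately preceding the lemma, and, as you observe, the only thing that really needs an argument is faithfulness of $\phi_{1}$, which in both cases reduces to faithfulness of $\phi$ on $\mathfrak{M}_{0}$ (Lemma~\ref{lem:NoAtoms}).

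The one place where you diverge slightly from the paper's proof is in how you package the faithfulness argument. You start from the formula $\phi_{1}(X)=\delta^{-1}\sum_{f\in E_{-}}\sigma(f)^{-1}\phi(\langle f,Xf\rangle_{A})$, deduce $\langle Yf,Yf\rangle_{A}=0$ for each $f$, hence $Yf=0$, and then need a further argument to conclude $Y=0$. The paper instead writes $Y=\sum_{g,h\in E_{-}}\ell(g)x_{g,h}\ell(h)^{*}$ and computes $\phi_{1}(YY^{*})$ directly (there is a typo in the paper: it writes $Y^{*}Y$ but the expansion given is of $YY^{*}$), obtaining a sum with strictly positive coefficients of the terms $\phi(x_{g,h}x_{g,h}^{*})$; vanishing of the total forces each $x_{g,h}=0$, so $Y=0$ with no further GNS bookkeeping. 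This is a bit cleaner because it never passes through vectors, using only the operator-level faithfulness of $\phi$. Your route does work: $Yf=0$ gives $x_{g,f}\langle f,f\rangle_{A}=0$ by orthogonality of the ranges of $\ell(g)$, and since $x_{g,f}=t(g)\,x_{g,f}\,t(f)$ (every term $\ell(g)x\ell(f)^{*}$ in $\mathfrak{M}_{1}$ forces this support condition on $x$), this already gives $x_{g,f}v=0$ for every vertex $v$, i.e.\ $x_{g,f}$ annihilates the full cyclic/separating set for the GNS representation of $\phi$ and hence $x_{g,f}=0$. You correctly flag this last step as the delicate one; the block decomposition $Y=\sum\ell(g)x_{g,h}\ell(h)^{*}$ is really what makes it go through, and once you invoke it you have essentially reconstructed the paper's argument. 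So the substance is the same; the paper's version is just a more economical presentation.

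One small wording slip: you write ``Since $\phi$ is faithful on $\mathfrak{M}_{0}$ and $\langle Yf,Yf\rangle_{A}$ is a positive element of $A$''; the quantity $\langle Yf,Yf\rangle_{A}$ lives in $A=\ell^{\infty}(\Gamma)$, so what you are actually using is that the restriction of $\phi$ to $A$ (which is $\sum_{v}\delta_{v}$) is faithful on $A$, which is immediate but should be said.
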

\begin{proof}
Let us consider $Y=\sum_{g,h\in E_{-}}\ell(g)x_{g,h}\ell(h)^{*}$,
$x_{g,h}\in W^{*}(c(e):e\in\Gamma)$ with $Y^{*}Y$ in the domain
of $\phi_{1}$. Then $Y^{*}Y=\sum_{g,h,g'}\ell(g)x_{g,h}x_{g',h}^{*}\ell(g')^{*}\Vert h\Vert^{2}$.
Moreover,\[
\phi_{1}(YY^{*})=\delta^{-1}\sum_{g,h}\Vert h\Vert^{2}\Vert g\Vert^{4}\phi(x_{g,h}x_{g,h}^{*})\]
and $x_{g,h}x_{g,h}^{*}\in\mathcal{M}_{0}$. Thus if $\phi_{1}(Y^{*}Y)=0$,
each of the positive terms in the sum above must be zero and so $\phi(x_{g,h}x_{g,h}^{*})=0$
for all $g,h$. It follows that $Y=0$.
\end{proof}
Define now the map $E_{1}:\mathfrak{M}_{1}\to\mathfrak{M}_{0}$ by\[
E_{1}(\ell(e)c(w)\ell(f)^{*})=\delta_{e=f}\delta^{-1}\left(\frac{\mu(s(e))}{\mu(t(e))}\right)c(w).\]
Note that\[
E_{1}(i(Y))=Y\]
 and moreover\begin{eqnarray*}
E_{1}(i(c(w))\ell(e)c(w)\ell(f)^{*}) & = & Y_{w}E_{1}(\ell(e)c(w')\ell(f)^{*})\\
E_{1}(\ell(e)c(w)\ell(f)^{*}i(c(w)')) & = & E_{1}(\ell(e)c(w)\ell(f)^{*})c(w')\end{eqnarray*}
so that $i\circ E_{1}:\mathfrak{M}_{1}\to i(\mathfrak{M}_{0})\subset\mathfrak{M}_{1}$
is an $\mathcal{A}_{0}$-linear projection. Moreover, we see that
$\phi_{1}=\phi\circ(i\circ E_{1})$ so that $\mathcal{E}_{1}=i\circ E_{1}$
is the trace-preserving conditional expectation from $\mathfrak{M}_{1}\to\mathfrak{M}_{0}$.
It follows that $\mathcal{E}_{1}$ extends also to the von Neumann
algebra generated by $\mathfrak{M}_{1}$.

\subsection{The algebras $\mathfrak{M}_{n}$ and traces $\phi_{n}$.}

The algebras $\mathfrak{M}_{n}$ with semi-finite traces $\phi_{k}$
are defined in a similar way. The algebra $\mathfrak{M}_{n}$ is the
linear span\[
\mathfrak{M}_{n}=\operatorname{span}\{\ell(e_{1})\cdots\ell(e_{n})c(w)\ell(f_{n})^{*}\cdots\ell(f_{1})^{*}:e_{1}\cdots e_{n}wf_{n}^{0}\cdots f_{1}^{o}\in L^{\pm}\}\]
where the parity of the loops is chosen to match the parity of $n$.
For a loop $e_{1}\cdots e_{n}wf_{n}^{o}\cdots f_{1}^{o}\in L_{\pm}$,
we set\[
c_{n}(e_{1}\cdots e_{n}wf_{n}^{o}\cdots f_{1}^{o})=\ell(e_{1})\cdots\ell(e_{n})c(w)\ell(f_{n})^{*}\cdots\ell(f_{1})^{*}\in\mathfrak{M}_{n}.\]
 Then map $L^+\ni w\mapsto c(D^{-1}_k(w))$ defines a $*$-homomorphism from $Gr_{k}P^{\Gamma}$ to
$\mathfrak{M}_{k}$. (Recall that $D_k(w))$ denotes the loop obtained by replacing the starting point in $w$ by the $k$-th point on the path $w$).

Let\begin{eqnarray*}
\phi_{n} & = & \delta^{-n}\sum_{w=f_{1}\cdots f_{n}\ }\left(\frac{\mu(s(f_{n}))}{\mu(t(f_{1}))}\right)^{1/2}\phi\circ\langle\cdot f_{n}\otimes\cdots\otimes f_{1},f_{n}\otimes\cdots\otimes f_{1}\rangle_{A}.\end{eqnarray*}
The inclusion $i=i_{n}^{n-1}:\mathfrak{M}_{n-1}\to\mathfrak{M}_{n}$
is given by\[
c_{n-1}(w)\mapsto\sum_{ewe^{o}\in L}\sigma(e)^{-1}\ell(e)c_{n-1}(w)\ell(e)^{*}.\]
One can check that $i$ is again a trace-preserving inclusion. The
conditional expectation $E_{n}:\mathcal{M}_{n}\to\mathcal{M}_{n-1}$
is given by\[
E_{n}(\ell(e)c_{n-1}(w)\ell(f)^{*})=\delta_{e=f}\delta^{-1}\left(\frac{\mu(s(e))}{\mu(t(e))}\right)c_{n-1}(w).\]
As before, set\[
\mathcal{E}_{n}=i\circ E_{n}:\mathfrak{M}_{n}\to i(\mathfrak{M}_{n-1})\subset\mathfrak{M}_{n}.\]
It is not hard to check that this is the unique trace-preserving $\mathfrak{M}_{n-1}$
linear conditional expectation from $\mathfrak{M}_{n}$ to $i(\mathfrak{M}_{n-1})$
and that the trace $\phi_{n}$ is faithful (the argument is exactly
the same as in the case $n=1$). Moreover, one can easily check that
$\mathcal{E}_{n}=Tr_{n}$ if we identify $\mathfrak{M}_{n}$ with
$Gr_{n}P^{\Gamma}$.

Let us set\[
i_{n}^{j}=i_{n}^{n-1}\circ\cdots\circ i_{j+1}^{j}:\mathfrak{M}_{j}\to\mathfrak{M}_{n},\qquad i_{n}=i_{n}^{0}.\]

Comparing these with the definitions of section \ref{sec:PlanarAlgebra}
we get:

\begin{thm}
The map $w\mapsto c_{n}(w)$ is a $*$-isomorphism from $Gr_{k}P^{\Gamma}$
onto $\mathfrak{M}_{k}$. The semifinite weight $\phi_{k}$ satisfies
$\phi_{k}(v\wedge_{0}Tr_{k}(x))=\phi_{k}(v\wedge_{0}x)$. In particular,
the trace $Tr_{k}$ is positive and faithful.
\end{thm}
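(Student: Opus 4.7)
The statement contains three claims — (i) that $c_k$ is a $*$-isomorphism, (ii) the factoring identity $\phi_k(v\wedge_0 Tr_k(x))=\phi_k(v\wedge_0 x)$, and (iii) positivity and faithfulness of $Tr_k$ — which I would establish in that order, deducing (iii) from (i) and (ii).

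For (i), surjectivity is immediate since $\mathfrak{M}_k$ is defined as the linear span of the $c_k(w)$. The homomorphism property $c_k(w)\cdot c_k(w')=c_k(w\wedge_k w')$ (with the rotational shift $D_k$ understood, as in the discussion of the graded multiplication on $P^\Gamma$) reduces to a direct layer-by-layer contraction: the innermost $\ell(f_j)^{*}$'s of $c_k(w)$ meet the outermost $\ell(e_j')$'s of $c_k(w')$ through the Pimsner-type identity $\ell(f)^{*}\ell(e)=\delta_{e=f}\|e\|^{2}=\delta_{e=f}\sigma(e)^{-1}$, producing exactly the formula for $\wedge_k$ recorded in Section~\ref{sec:PlanarAlgebra} with the correct Perron--Frobenius weights. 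Compatibility with $\dagger$ is immediate from $c(e)^{*}=c(e^o)$ together with loop reversal. For injectivity, I would evaluate a finite combination $\sum_w\alpha_w c_k(w)$ against Fock space test vectors of the form $f_k\otimes\cdots\otimes f_1\otimes\zeta_v$, where $\zeta_v$ is a vacuum supported at the appropriate starting vertex; the resulting leading tensor component $e_1\otimes\cdots\otimes e_k\otimes c(w_0)\zeta_v$ recovers the entire edge sequence of $w$, so distinct loops give linearly independent operators.

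For (ii), I would argue via the tower of conditional expectations. Each $\mathcal{E}_n=i\circ E_n$ is the unique $\phi_n$-preserving conditional expectation onto $i(\mathfrak{M}_{n-1})$, and every inclusion $i:\mathfrak{M}_{n-1}\to\mathfrak{M}_n$ is trace-preserving, so iteration yields $\phi_k=\phi_0\circ E_1\circ\cdots\circ E_k$. Post-composing with the already-established realization of $Tr_0$ as the conditional expectation $E:\mathfrak{M}_0\to A_\pm$ produces a trace-preserving conditional expectation $F:\mathfrak{M}_k\to A_\pm$. Because $v\in P_0^\Gamma$ is central and commutes with each $E_n$, we obtain $\phi_k(v\wedge_0 x)=\phi_k(v\wedge_0 F(x))$, so (ii) reduces to showing that under $c_k$ the map $F$ coincides with the planar trace $Tr_k$. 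I would prove this by induction on $k$, with Lemma~\ref{lemma:Tr0Formula} supplying the base case: in the inductive step, the outer layer-stripping by $E_k$ produces the outermost ``rainbow'' cap, while the full Temperley--Lieb sum $T_{m-k}$ in $Tr_k$ emerges from the Wick-type expansion of the inner factor $c(w_0)$ in $c_k(w)=\ell(e_1)\cdots\ell(e_k)c(w_0)\ell(f_k)^{*}\cdots\ell(f_1)^{*}$ when the remaining expectations $E\circ E_1\circ\cdots\circ E_{k-1}$ are applied.

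For (iii), given (ii) both assertions follow by formal manipulations. Positivity: for $x\in Gr_k P^\Gamma$ and positive $v\in P_0^\Gamma$,
\[
\phi_k\bigl(v\wedge_0 Tr_k(x^\dagger\wedge_k x)\bigr)=\phi_k\bigl(v\wedge_0 x^\dagger\wedge_k x\bigr)\geq 0
\]
since $\phi_k$ is a positive weight and $v\wedge_0 x^\dagger\wedge_k x$ is positive in $\mathfrak{M}_k$; as $v$ ranges over the positive cone of the abelian algebra $\ell^\infty(\Gamma_\pm)$, this forces $Tr_k(x^\dagger\wedge_k x)\geq 0$ as a function on vertices. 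Faithfulness: if $Tr_k(x^\dagger\wedge_k x)=0$ then $\phi_k(x^\dagger\wedge_k x)=0$ by taking $v$ an approximate unit, forcing $c_k(x)=0$ by faithfulness of $\phi_k$ and hence $x=0$ by (i). The main obstacle is step (ii), specifically identifying the iterated operator-theoretic expectation with the planar trace: a naive layer-by-layer reading of the formulas for $E_n$ seems to produce only the single nested Temperley--Lieb pairing, and the richer sum $T_{m-k}$ only emerges after tracking carefully how the outer-layer stripping interacts with the creation--annihilation expansion of $c(w_0)$ on the vacuum. Handling this combinatorial matching rigorously — and reconciling the Perron--Frobenius normalizations on both sides so that the outer weights agree — is the crux of the proof.
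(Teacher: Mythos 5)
Your proposal is essentially correct and follows the same implicit argument that the paper leaves to the reader (the paper states the theorem immediately after the paragraph defining $\mathfrak{M}_n$, $\phi_n$, $i$, $E_n$, $\mathcal{E}_n$, with no separate proof environment, and relies on the claims ``it is not hard to check'' and ``one can easily check that $\mathcal{E}_n=Tr_n$''). Your factorization $\phi_k=\phi\circ E_1\circ\cdots\circ E_k$, your reduction of (ii) to the identity $E\circ E_1\circ\cdots\circ E_k=Tr_k$ under $c_k$, and your deduction of positivity and faithfulness of $Tr_k$ from (i)+(ii) plus faithfulness of $\phi_k$ match what the surrounding text establishes. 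The one step you flag as the ``crux'' — the combinatorial matching of the iterated operator expectation with the planar trace tangle — is exactly the step the paper dispatches with ``one can easily check''; the way this actually goes is slightly simpler than you suggest: the outer expectations $E_1,\ldots,E_k$ produce only the delta factors $\delta_{e_j=f_j}$ (the nested rainbow of $k$ wrap-around strings in the $Tr_k$ tangle), and the full Temperley--Lieb sum $T_{m-k}$ arises entirely from the final application of $E=Tr_0$ to the inner factor $c(w_0)$ via Lemma~\ref{lemma:Tr0Formula}, not from the $E_j$'s. Apart from that small reattribution of which map contributes what, and the Perron--Frobenius bookkeeping that you correctly identify as needing care, the argument is sound.
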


\subsection{Higher relative commutants.}

We now let\[
\mathcal{M}_{k}=W^{*}(Gr_{k}P^{\Gamma},\phi_{k})=W^{*}(\mathfrak{M}_{k}).\]
Given a planar subalgebra $P\subset P^{\Gamma}$, we'll denote by
$M$ the subalgebra of $\mathcal{M}_{k}$ generated by elements from
$P$. In other words, $M_{k}=W^{*}(Gr_{k}P,Tr_{k})$.

We'll denote by $\cup_{n}$ and $\Cup_{n}$ the images in $\mathfrak{M}_{n}$
of $\cup,\Cup\in\mathfrak{M}_{0}$. Note that $\cup_{n},\Cup_{n}\in M_{n}$.

\begin{lem}
\label{lem:relComContainsPn}Let $e_{1}\cdots e_{n}f_{1}^{o}\cdots f_{n}^{o}$
be a loop in $L_{\pm}$ (parity according to $n$). Then the element  $Z=\ell(e_{1})\cdots\ell(e_{n})\ell(f_{1})^{*}\cdots\ell(f_{n})^{*}\in W^{*}(\cup_{n},\Cup_{n})'\cap\mathfrak{M}_{n}\subset W^{*}(\cup_{n},\Cup_{n})'\cap\mathcal{M}_{n}$. 
\end{lem}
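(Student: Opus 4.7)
The plan is to establish the claim in two steps: first to identify $Z$ as an element of $\mathfrak{M}_n$, and then to verify by a direct computation on the Fock space that $Z$ commutes with both $\cup_n$ and $\Cup_n$. After relabeling $f'_j:=f_{n-j+1}$ to match the indexing convention in the definition of $c_n$, one checks that $Z$ equals $c_n(w)$ for the loop $w=e_1\cdots e_n f_1^o\cdots f_n^o$; indeed $c_n(e_1\cdots e_n f'^o_n\cdots f'^o_1)=\ell(e_1)\cdots\ell(e_n)\ell(f'_n)^*\cdots\ell(f'_1)^*=\ell(e_1)\cdots\ell(e_n)\ell(f_1)^*\cdots\ell(f_n)^*=Z$, placing $Z$ in $\mathfrak{M}_n\subset\mathcal{M}_n$.

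For the commutation, iterating the definition of the inclusions $i_k^{k-1}$ gives
\[
\cup_n=\sum_{g_1,\ldots,g_n}\alpha(g_1,\ldots,g_n)\,\ell(g_n)\cdots\ell(g_1)\,\cup\,\ell(g_1)^*\cdots\ell(g_n)^*,
\]
where the sum runs over tuples $(g_1,\ldots,g_n)$ such that $g_ng_{n-1}\cdots g_1$ is a path in $\Gamma$ ending at an even vertex and $\alpha$ is the appropriate product of Perron-Frobenius factors; the analogous formula with $\Cup$ in place of $\cup$ describes $\Cup_n$. Using $\ell(e)^*\ell(g)=\delta_{e,g}\sigma(e)^{-1}P_{t(e)}$ one checks that on a Fock vector of the form $\eta=g_n\otimes g_{n-1}\otimes\cdots\otimes g_1\otimes\xi$ the Perron-Frobenius factors telescope and
\[
\cup_n\eta=g_n\otimes g_{n-1}\otimes\cdots\otimes g_1\otimes\cup\xi,
\]
so that $\cup_n$ is the identity on the outer $n$ tensor factors and applies $\cup$ to the inner part $\xi$ (and likewise $\Cup_n$ applies $\Cup$ to $\xi$).

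The structure of $Z$ is complementary: the ordering of its creations and annihilations forces $Z\eta=0$ unless $\eta$ begins with the reversed path $f_n\otimes f_{n-1}\otimes\cdots\otimes f_1$, and on such $\eta=f_n\otimes\cdots\otimes f_1\otimes\xi$ one obtains $Z\eta=\bigl(\prod_{j=1}^n\sigma(f_j)^{-1}\bigr)\,e_1\otimes\cdots\otimes e_n\otimes\xi$. The loop condition on $w$ ensures that $f_n\cdots f_1$ and $e_1\cdots e_n$ are valid paths starting at the common vertex $s(f_n)=s(e_1)$, so this is a well-defined Fock tensor. In other words $Z$ acts only on the outer $n$ tensor factors and leaves the inner part $\xi$ untouched.

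These two complementary descriptions make the commutation transparent: on $\eta=f_n\otimes\cdots\otimes f_1\otimes\xi$ both $Z\cup_n\eta$ and $\cup_n Z\eta$ equal $\bigl(\prod\sigma(f_j)^{-1}\bigr)\,e_1\otimes\cdots\otimes e_n\otimes\cup\xi$, and on any other $\eta$ both compositions vanish (since $\cup_n$ preserves the outer $n$ tensor factors and therefore cannot move a vector into or out of the support of $Z$). The same argument with $\Cup$ replacing $\cup$ produces the commutation with $\Cup_n$, and the result follows. The only real obstacle is the careful bookkeeping of indices, the reversed ordering of the $f_j$'s, and the telescoping of the $\sigma(\cdot)$ factors; once these are managed, no deeper ingredient is needed.
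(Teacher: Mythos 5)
Your proof is correct. Since the paper omits this proof with the remark that it is a straightforward computation, there is nothing to compare against, but your argument is a clean and complete version of exactly the kind of Fock-space verification the authors have in mind. The two structural observations --- that $\cup_{n}$ (resp.\ $\Cup_{n}$), obtained by iterating the inclusions $i_{k}^{k-1}$, acts as the identity on the outer $n$ tensor slots and as $\cup$ (resp.\ $\Cup$) on the remaining slots, while $Z=\ell(e_{1})\cdots\ell(e_{n})\ell(f_{1})^{*}\cdots\ell(f_{n})^{*}$ acts only on the outer $n$ slots and fixes the rest --- are correct and make the commutation transparent. Your relabeling $f'_{j}=f_{n-j+1}$ to reconcile the indexing in the lemma with the definition of $c_{n}$ is also right, and the telescoping of the $\sigma(\cdot)$ factors follows from $\ell(e)^{*}\ell(e)=\sigma(e)^{-1}P_{t(e)}$ combined with the normalization that makes $i_{k}^{k-1}$ a unital $*$-homomorphism (the exponent of $\sigma(e)$ written in the paper's general formula for $i_{n}^{n-1}$ appears to be a typo, but the unitality requirement fixes the coefficient as you used it). One could spell out slightly more explicitly that on Fock vectors of degree $<n$ both $Z$ and $\cup_{n},\Cup_{n}$ vanish, so the case split is exhaustive, but this is implicit in what you wrote.
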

The proof is a straightforward computation and is omitted.

\begin{lem}
\label{lem:relCommcupCup}Let $P\subset P^{\Gamma}$ be a subfactor
planar algebra with index, and let $M_{n}=W^{*}(Gr_{n}P,Tr_{n})$
as above. Then $W^{*}(\cup_{n},\Cup_{n})'\cap M_{n}=P_{n,+}$. 
\end{lem}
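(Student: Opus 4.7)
For the straightforward inclusion $P_{n,+}\subseteq W^*(\cup_n,\Cup_n)'\cap M_n$, I will invoke Lemma \ref{lem:relComContainsPn}. Under the identification $\mathfrak{M}_n\cong Gr_n P^\Gamma$, the low-degree piece $P^\Gamma_{n,+}$ is linearly spanned by the operators $\ell(e_1)\cdots\ell(e_n)\ell(f_1)^*\cdots\ell(f_n)^*$ indexed by length-$2n$ loops, and each of them lies in $W^*(\cup_n,\Cup_n)'\cap\mathfrak{M}_n$ by that lemma. Since $P\subseteq P^\Gamma$ planarly, $P_{n,+}\subseteq P^\Gamma_{n,+}$ commutes with $\cup_n$ and $\Cup_n$; intersecting with $M_n$ then gives the inclusion.

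For the reverse inclusion, my plan is to present $\mathfrak{M}_n$ as a ``matrix algebra'' over $\mathfrak{M}_0$ and reduce to Theorem \ref{thm:relCommutantTrivial}. I will write each generator of $\mathfrak{M}_n$ in the form $V_\gamma c(w) V_\delta^*$, where $V_\gamma=\ell(e_1)\cdots\ell(e_n)$ for a length-$n$ path $\gamma=e_1\cdots e_n$ and $c(w)\in t(\gamma)\mathfrak{M}_0 t(\delta)$, forcing $t(\gamma)=t(\delta)$. The products $V_\gamma V_\delta^*$ then form a matrix-unit system, because $V_\gamma^* V_{\gamma'}$ reduces to a scalar multiple of $t(\gamma)$ when $\gamma=\gamma'$ and vanishes otherwise. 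Consequently, any $y\in\mathfrak{M}_n$ admits a locally finite block decomposition $y=\sum_{\gamma,\delta} V_\gamma y_{\gamma\delta} V_\delta^*$ with $y_{\gamma\delta}\in t(\gamma)\mathfrak{M}_0 t(\delta)$. Because $\cup_n$ and $\Cup_n$ are of the form $i_n(\cdot)$---explicitly, sums of $V_\gamma\cup V_\gamma^*$ and $V_\gamma\Cup V_\gamma^*$ over $\gamma$ with suitable weights---a short calculation yields that $[y,\cup_n]=0$ iff $[y_{\gamma\delta},\cup]=0$ for every pair $(\gamma,\delta)$, and likewise for $\Cup_n$. I will then extend this block decomposition and commutation criterion $\sigma$-weakly to $\mathcal{M}_n$ using normality of the compression maps $y\mapsto V_\gamma^* y V_\delta$ from $\mathcal{M}_n$ to $t(\gamma)\mathcal{M}_0 t(\delta)\subseteq \mathcal{N}_+$.

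Applying this to $x\in W^*(\cup_n,\Cup_n)'\cap M_n\subseteq\mathcal{M}_n$, each block $x_{\gamma\delta}\in t(\gamma)\mathcal{M}_0 t(\delta)\subseteq\mathcal{N}_+$ commutes with both $\cup$ and $\Cup$, and Theorem \ref{thm:relCommutantTrivial} will then force $x_{\gamma\delta}\in P_0^\Gamma$, i.e.\ $x_{\gamma\delta}=\alpha_{\gamma\delta}t(\gamma)$ for a scalar (and vanishes unless $t(\gamma)=t(\delta)$). Hence $x=\sum\alpha_{\gamma\delta}V_\gamma V_\delta^*$ is a bounded element of $P^\Gamma_{n,+}\subseteq\mathfrak{M}_n$. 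To finish, I will observe that $P_{n,+}$ is the degree-$n$ component of $Gr_n P$, is finite-dimensional (hence $\sigma$-weakly closed) by the subfactor planar algebra axioms, and coincides with $P^\Gamma_{n,+}\cap Gr_n P$ at the algebra level; therefore $P^\Gamma_{n,+}\cap M_n=P_{n,+}$ persists in the vN closures, and $x\in P_{n,+}$.

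\textbf{Main obstacle.} The principal technical difficulty will be the passage from the algebraic decomposition at the level of $\mathfrak{M}_n$ to its vN closure $\mathcal{M}_n$: I must verify that an arbitrary $x\in\mathcal{M}_n$ is faithfully reconstructed from its blocks $x_{\gamma\delta}$ by a $\sigma$-weakly convergent expansion, and that the commutation with $\cup_n$ and $\Cup_n$ descends block-by-block. This is a $\sigma$-weak continuity argument that hinges on the normality of the compressions $y\mapsto V_\gamma^* y V_\delta$. A secondary subtlety is checking the final identification $P^\Gamma_{n,+}\cap M_n=P_{n,+}$ in the vN setting, which is routine in view of the finite dimensionality of $P_{n,+}$ but deserves an explicit step.
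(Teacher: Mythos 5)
Your proposal follows the same overall strategy as the paper: prove $P_{n,+}\subseteq W^*(\cup_n,\Cup_n)'\cap M_n$ via Lemma~\ref{lem:relComContainsPn}, decompose elements of $\mathfrak{M}_n$ into blocks using the isometries $F_w=\ell(e_1)\cdots\ell(e_n)$ (your $V_\gamma$), push the commutation condition down to the blocks, apply Theorem~\ref{thm:relCommutantTrivial}, and finally intersect with $M_n$. Two issues arise, one minor and one that is a genuine gap.

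The minor point: your assertion that the generic block $y_{\gamma\delta}=V_\gamma^* y V_\delta$ lies in $t(\gamma)\mathfrak{M}_0 t(\delta)$ is not right. The middle piece $w$ of a generator $\ell(e_1)\cdots\ell(e_n)c(w)\ell(f_n)^*\cdots\ell(f_1)^*$ is a \emph{path} from $t(e_n)$ to $t(f_n)$, not a loop, so the blocks live in $t(\gamma)\,\mathcal{N}_+\,t(\delta)$ where $\mathcal{N}_+$ is the algebra generated by paths between even vertices; since $\mathcal{M}_0$ is generated by loops, $t(\gamma)\mathcal{M}_0 t(\delta)=0$ when $t(\gamma)\neq t(\delta)$, which would incorrectly wipe out off-diagonal blocks of an arbitrary element. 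The conclusion that blocks vanish off the diagonal $t(\gamma)=t(\delta)$ is correct only \emph{after} you impose commutation with $\cup,\Cup$ and invoke Theorem~\ref{thm:relCommutantTrivial}(ii), which gives $W^*(\cup,\Cup)'\cap\mathcal{N}_+=P_0^\Gamma$. So the theorem you cite must be applied with the ambient algebra $\mathcal{N}_+$, not $\mathcal{M}_0$.

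The substantive gap is in your final step, the claim that $P_{n,+}^\Gamma\cap M_n=P_{n,+}$ ``persists in the vN closures'' because $P_{n,+}$ is finite-dimensional and equals $P_{n,+}^\Gamma\cap Gr_n P$ at the algebraic level. Finite-dimensionality of $P_{n,+}$ alone does not give this: $M_n$ is the weak closure of $Gr_n P=\bigoplus_{k\geq n}P_{k,+}$, and a weak limit of elements with unbounded degree could a priori land in $P_{n,+}^\Gamma$ without being in $P_{n,+}$. Nothing in your argument rules this out. The paper closes this gap by introducing the normal conditional expectation $E:\mathcal{N}_+\to A_+$ of~\eqref{eq:defOfE} and the induced normal map $E_n(Y)=\sum c_{w,w'}F_wE(\hat Y_{w,w'})F_{w'}^*$. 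Two facts are then used: $E_n(Y)=Y$ for $Y\in P_{n,+}^\Gamma$, and, crucially, $E_n(Gr_n P)\subseteq P_{n,+}$ because the zero-box space of $P$ is one-dimensional (so each $E(\hat Y^{(j)}_{w,w'})\in\mathbb{C}1_{A_+}$ and the tangle producing $E_n$ stays inside $P$). Normality of $E_n$ then gives $Y=E_n(Y)=\lim_j E_n(Y^{(j)})\in P_{n,+}$ using, finally, the closedness of $P_{n,+}$. You would need to supply this conditional-expectation argument (or an equivalent) to finish; the normality you mention for the compressions $y\mapsto V_\gamma^* y V_\delta$ is a different map and does not by itself deliver the containment $P_{n,+}^\Gamma\cap M_n\subseteq P_{n,+}$.
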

\begin{proof}
Let $Q_{n}$ be the set of all paths in $\Gamma$ of length $n$ ending
at an even vertex (and starting at an even or odd vertex, according
to the parity of $n$). For $w=e_{1}\cdots e_{n}\in Q_{n}$, let $F_{w}=\ell(e_{1})\cdots\ell(e_{n})$.
Then for any $Y\in\mathfrak{M}_{n}$,\begin{equation}
\hat{Y}_{w,w'}=F_{w}^{*}YF_{w'}\in\mathcal{N}_{+}.\label{eq:Yw}\end{equation}
Moreover,\begin{equation}
Y=\sum_{w,w'\in Q_{n}}c_{w,w'}F_{w}\hat{Y}_{w,w'}F_{w'}^{*}\label{eq:YfromYw}\end{equation}
where $c_{w,w'}$ are some constants. Since the sum above is finite,
it follows that equations \eqref{eq:Yw} and \eqref{eq:YfromYw} continue
to hold whenever $Y\in\mathcal{M}_{n}$, i.e. after passing to weak
limits.

Thus if $Y\in\mathcal{M}_{n}$, and we set $Z=F_{w}F_{w}^{*}$, $Z'=F_{w'}F_{w'}^{*}$,
then $ZYZ'=F_{w}^{*}\hat{Y}F_{w}$, where $\hat{Y}\in\mathcal{N}_{+}$.
Moreover, $Y$ is equal to a fixed finite linear combination of terms
$\{ZYZ':w,w'\in Q_{n}\}$.

Let us assume now that $Y\in W^{*}(\cup_{n},\Cup_{n})'\cap\mathcal{M}_{n}$.
Then by choosing $Z,Z'$ as above, we see from Lemma \ref{lem:relComContainsPn}
that $ZYZ'\in W^{*}(\cup_{n},\Cup_{n})'\cap\mathcal{M}_{n}$. Using
equations \eqref{eq:Yw} and \eqref{eq:YfromYw}, we conclude that
$Y$ is a finite linear combination of terms of the form\[
\ell(e_{1})\cdots\ell(e_{n})X\ell(f_{n})^{*}\cdots\ell(f_{1})^{*},\qquad X\in\mathcal{N}_{+},\]
and that each such term must belong to the relative commutant $W^{*}(\cup_{n},\Cup_{n})'\cap\mathcal{M}_{k}$.

We can thus assume that $Y=\ell(e_{1})\cdots\ell(e_{n})X\ell(f_{n})^{*}\cdots\ell(f_{1})^{*}$
with $X\in\mathcal{N}_{+}$. Then \[
[Y,i_{n}(\cup)]=\left(\frac{\mu(t(e_{n}))}{\mu(s(e_{1}))}\right)^{1/2}\ell(e_{1})\cdots\ell(e_{n})[X,\cup]\ell(f_{n})^{*}\cdots\ell(f_{1})^{*}\]
and similarly\[
[Y,i_{n}(\Cup)]=\left(\frac{\mu(t(e_{n}))}{\mu(s(e_{1}))}\right)^{1/2}\ell(e_{1})\cdots\ell(e_{n})[X,\Cup]\ell(f_{n})^{*}\cdots\ell(f_{1})^{*}.\]
Thus if $Y$ is in the relative commutant of $i_{n}(\cup,\Cup)\cap\mathcal{M}_{n}$,
then $X$ must be in the relative commutant of $\{\cup,\Cup\}$ in
$\mathcal{N}_{+}$, which we know to be $A_{+}$ (Theorem \ref{thm:relCommutantTrivial}).
It follows that\begin{eqnarray*}
\{\cup,\Cup\}'\cap\mathcal{M}_{n} & \subset & \operatorname{span}\{\ell(e_{1})\cdots\ell(e_{n})\ell(f_{n})^{*}\cdots\ell(f_{1})^{*}:e_{1}\cdots e_{n}f_{n}^{o}\cdots f_{1}^{o}\in L_{\pm}\}\\
 & = & \{c_{n}(w):w\in L_{\pm}\textrm{ loop of length }2n\textrm{ starting at even/odd vertex}\}.\end{eqnarray*}
Since the reverse inclusion holds by Lemma \ref{lem:relComContainsPn},
equality holds. In particular, $\{\cup_{n},\Cup_{n}\}'\cap\mathcal{M}_{n}=\{\cup_{n},\Cup_{n}\}'\cap\mathfrak{M}_{n}$.
We now see from the definitions in section \ref{sec:PlanarAlgebra}
that the latter algebra is exactly the planar algebra $P_{n,+}^{\Gamma}$
taken with its usual multiplication.

Thus it follows that\[
\{\cup,\Cup\}'\cap M_{n}=P_{n,+}^{\Gamma}\cap M_{n}.\]
We claim that the latter intersection is exactly $P_{n,+}$. To see
this, write any $Y\in\mathcal{M}_{n}$ as\[
Y=\sum_{w,w'\in Q_{n}}c_{w,w'}F_{w}\hat{Y}_{w,w;}F_{w'}^{*}\]
with $\hat{Y}_{w,w'}=F_{w}^{*}YF_{w'}\in\mathcal{N}_{+}$, as before.
Let $E_{n}(Y)=\sum_{w,w'\in Q_{n}}c_{w,w'}F_{w}E(\hat{Y}_{w,w'})F_{w}^{*}$,
where $E:\mathcal{N}_{+}\to A_{+}$ is the (normal) conditional expectation
given by \eqref{eq:defOfE}. Then $E_{n}$ is a weakly-continuous
map, and moreover $E_{n}(Y)=Y$ if $Y\in P_{n,+}^{\Gamma}$. Thus
if $Y\in P_{n,+}^{\Gamma}\cap M_{n}$, then $Y$ is the limit (in
the weak operator topology) of a sequence $Y^{(j)}\in(P_{k_{j},+})\subset M_{n}$.
But then $Y=E(Y)=\lim_{k}E(Y^{(k)})$. Since the zero-box space of
$P$ is one-dimensional, it follows that $E(Y^{(k)})\in P_{n,+}$
(since then $E(\hat{Y}_{w,w'}^{(k)})\in\mathbb{C}1_{A_{+}})$ and
so $Y\in P_{n,+}$. Thus $P_{n,+}^{\Gamma}\cap M_{n}=P_{n,+}$ and
the theorem is proved.
\end{proof}
\begin{thm}
\label{thm:HRcommutsAsAlgArePks}Let $P\subset P^{\Gamma}$ be a subfactor
planar subalgebra of index $\delta\neq1$. Let $M_{k}=W^{*}(Gr_{k}P,tr_{k})$.
Then $M_{0}'\cap M_{k}=P_{k,+}$ as algebras (here $P_{k,+}$ is taken
with ordinary multiplication) in a way that preserves Jones projections.
\end{thm}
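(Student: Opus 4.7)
\medskip

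\noindent\textbf{Proof plan.}
The plan is to prove the two inclusions $M_{0}'\cap M_{k}\subseteq P_{k,+}$ and $P_{k,+}\subseteq M_{0}'\cap M_{k}$ separately, drawing on Lemma \ref{lem:relCommcupCup} for one direction and on the explicit Fock-space model for the other. The matching of Jones projections will then come for free from the formulas for the conditional expectations $\mathcal{E}_{j}$.

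For $M_{0}'\cap M_{k}\subseteq P_{k,+}$: since $P$ is a subfactor planar algebra it contains the Temperley--Lieb sub-planar algebra, and in particular $\cup,\Cup\in P\subseteq\mathfrak{M}_{0}\subseteq M_{0}$. Pushing these forward by the trace-preserving inclusion $i_{k}:M_{0}\hookrightarrow M_{k}$ gives $\cup_{k},\Cup_{k}\in i_{k}(M_{0})$. Any $Y\in M_{0}'\cap M_{k}$ therefore commutes with $\cup_{k}$ and $\Cup_{k}$, so $Y\in\{\cup_{k},\Cup_{k}\}'\cap M_{k}=P_{k,+}$ by Lemma \ref{lem:relCommcupCup}.

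For $P_{k,+}\subseteq M_{0}'\cap M_{k}$ I would work at the level of the Fock-space operators and prove the stronger statement that every basis element of $P_{k,+}^{\Gamma}$ commutes with every generator $i_{k}(c(w))$ of $i_{k}(\mathfrak{M}_{0})$; restricting then gives $P_{k,+}\subseteq i_{k}(Gr_{0}P)'\cap M_{k}$, and extending to weak closures yields $P_{k,+}\subseteq i_{k}(M_{0})'\cap M_{k}$. A basis element of $P_{k,+}^{\Gamma}$ has the form $y=c_{k}(e_{1}\cdots e_{k}f_{k}^{o}\cdots f_{1}^{o})=\ell(e_{1})\cdots\ell(e_{k})\ell(f_{k})^{*}\cdots\ell(f_{1})^{*}$ for a loop of length $2k$, while iterating the definition of $i_{k}^{k-1}$ gives
\[
i_{k}(c(w))=\sum_{g_{1}\cdots g_{k}}\prod_{j}\sigma(g_{j})^{-1}\ell(g_{1})\cdots\ell(g_{k})\,c(w)\,\ell(g_{k})^{*}\cdots\ell(g_{1})^{*}.
\]
Using $\ell(g)^{*}\ell(e)=\delta_{g=e}\sigma(e)^{-1}$, both products $i_{k}(c(w))\cdot y$ and $y\cdot i_{k}(c(w))$ collapse to the same operator $\ell(e_{1})\cdots\ell(e_{k})c(w)\ell(f_{k})^{*}\cdots\ell(f_{1})^{*}$, multiplied respectively by $\prod_{j}\sigma(e_{j})^{-2}$ and $\prod_{j}\sigma(f_{j})^{-2}$. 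These two scalars coincide because $\sigma(e)=[\mu(t(e))/\mu(s(e))]^{1/2}$ telescopes along any path to $[\mu(\text{end})/\mu(\text{start})]^{1/2}$, and $e_{1}\cdots e_{k}$ and $f_{1}\cdots f_{k}$ are two paths joining the same pair of vertices (being the two halves of the loop).

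For the Jones projections, I would use the explicit formula $\mathcal{E}_{j}=i\circ E_{j}$ where $E_{j}(\ell(e)c_{j-1}(w)\ell(f)^{*})=\delta_{e=f}\delta^{-1}[\mu(s(e))/\mu(t(e))]c_{j-1}(w)$, and verify by direct computation on Fock space that the operator implementing $\mathcal{E}_{j}$ is exactly $c_{j+1}$ applied to the Temperley--Lieb diagram $\mathbf{e}_{j}\in P_{j+1,+}$ of Remark \ref{rem:UsualOps}. Since tower Jones projections are characterized by implementing the trace-preserving conditional expectations, the algebra isomorphism $M_{0}'\cap M_{k}\cong P_{k,+}$ automatically carries them to the planar-algebra Jones projections.

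The main obstacle is the bookkeeping of Perron--Frobenius $\sigma$-factors in the commutativity calculation; everything hinges on the telescoping identity $\prod_{i}\sigma(e_{i})=[\mu(\text{end})/\mu(\text{start})]^{1/2}$ for any path. Once this is set up the two scalars match on the nose and the rest of the argument is formal.
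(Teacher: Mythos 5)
Your proof is correct and follows the same two-inclusion strategy as the paper's proof: the inclusion $M_{0}'\cap M_{k}\subseteq P_{k,+}$ via Lemma~\ref{lem:relCommcupCup} applied to $\cup_{k},\Cup_{k}\in i_{k}(M_{0})$, and the reverse inclusion via commutativity of elements $c_{k}(w)$ of $P_{k,+}^{\Gamma}$ with $i_{k}(\mathfrak{M}_{0})$. The main difference is that you spell out the $\ell(g)^{*}\ell(e)$ contractions and the telescoping of the Perron--Frobenius factors that make the commutator vanish, where the paper records this step as immediate, so your write-up supplies details the paper omits rather than a new argument.
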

\begin{proof}
Since $\cup_{k},\Cup_{k}\in M_{k}$, Lemma \ref{lem:relCommcupCup}
shows that $P_{k,+}\supset M_{0}'\cap M_{k}$. Thus it is enough to
prove that $P_{k,+}\subset M_{0}'\cap M_{k}$. But this is immediate,
since $P_{k}$ commutes with $i_{k}(\mathfrak{M}_{0})$ and thus with
$M_{0}$. The correspondence takes Jones projections to Jones projections
(as is immediate from the pictures).
\end{proof}
\begin{lem}
Let $\mathbf{e}_{k}\in P_{k,+}$, $k\geq2$ be the Jones projection.
Then $\mathbf{e}_{k}$ is the Jones projection for the inclusion $M_{k-2}\subset M_{k-1}$.
\end{lem}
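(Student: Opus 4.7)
The plan is to verify, by drawing pictures in $(Gr_k P, \wedge_k, \dagger)$, that $\mathbf{e}_k \in P_{k,+}$ satisfies the standard characterization of the Jones projection for the basic construction $M_{k-2}\subset M_{k-1}\subset M_k$ -- namely, that $\mathbf{e}_k$ is a projection, commutes with $M_{k-2}$, has the correct trace $\delta^{-2}$, implements $E_{M_{k-2}}$ as $\mathbf{e}_k \wedge_k x \wedge_k \mathbf{e}_k = E_{M_{k-2}}(x)\wedge_k \mathbf{e}_k$ for $x\in M_{k-1}$, and together with $M_{k-1}$ generates $M_k$. By Theorem \ref{thm:HRcommutsAsAlgArePks} the element $\mathbf{e}_k$ already lies in $M_0'\cap M_k\subset M_k$, so it is meaningful to ask these questions inside $M_k$.

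The first three points are direct calculations with tangles. The composition $\mathbf{e}_k\wedge_k\mathbf{e}_k$ produces a single closed string between a cup of one factor and a cap of the other; the factor of $\delta$ from removing that string cancels the $\delta^{-1}$ normalization built into $\mathbf{e}_k$, giving $\mathbf{e}_k\wedge_k \mathbf{e}_k=\mathbf{e}_k$, and self-adjointness under $\dagger$ is manifest from the left-right reflection symmetry of the TL picture. Commutation with $M_{k-2}$ follows because the nontrivial content of the cap-cup of $\mathbf{e}_k$ sits on the last two through-strings, while the iterated inclusion $Gr_{k-2}P\hookrightarrow Gr_{k-1}P\hookrightarrow Gr_k P$ places any element of $M_{k-2}$ on the leftmost $k-2$ strings under $\wedge_k$, so the two tangles slide past each other. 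The trace computation $Tr_k(\mathbf{e}_k)=\delta^{-2}$ comes out by plugging $\mathbf{e}_k$ into the definition of $Tr_k$ and evaluating the pair of extra closed loops produced by the cap-cup against $T_0$.

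The remaining two facts -- the conditional-expectation identity and generation of $M_k$ by $M_{k-1}$ and $\mathbf{e}_k$ -- are the substantive steps and share a common diagrammatic flavor. For
\[
\mathbf{e}_k \wedge_k x \wedge_k \mathbf{e}_k \;=\; E_{M_{k-2}}(x)\wedge_k \mathbf{e}_k,\qquad x\in M_{k-1},
\]
drawing both sides shows that the cap-cup pair of the two copies of $\mathbf{e}_k$ traps the rightmost through-string of $x$ (coming from the embedding $M_{k-1}\hookrightarrow M_k$) inside a bounded region, so that $x$ is forced through the `cap-off-the-last-string' tangle; this tangle is exactly the diagrammatic formula for the basic-construction conditional expectation of $M_{k-1}$ onto $M_{k-2}$, and the remaining outer picture is precisely $\mathbf{e}_k$. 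Generation then follows either from the uniqueness of the basic construction applied to the pair $(M_{k-1},\mathbf{e}_k)$, since the verified relations plus the trace value characterize it, or by an explicit pictorial argument that any $k$-tangle in $Gr_k P$ can be produced from $(k-1)$-tangles glued along an $\mathbf{e}_k$. The hard part will be the clean pictorial identification of the `cap-off-the-last-string' operation with the conditional expectation $E_{M_{k-2}}$ on the operator-algebra side -- once that dictionary is made rigorous, the rest is mechanical tangle composition, which is why the authors are likely to close the proof with ``just draw the picture.''
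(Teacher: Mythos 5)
Your plan is in the same spirit as the paper's (verify diagrammatically that $\mathbf{e}_k$ satisfies the abstract characterization of the Jones projection), and the commutation step and the conditional-expectation identity are carried out exactly as you describe. But there is a concrete gap in your verification list: you propose to check only the scalar $Tr_k(\mathbf{e}_k)=\delta^{-2}$, whereas what the abstract characterization actually requires is the \emph{Markov property}
\[
\delta\,Tr_k(x\wedge_k\mathbf{e}_k)=Tr_{k-1}(x),\qquad x\in M_{k-1},
\]
equivalently $E_{M_{k-1}}(\mathbf{e}_k)=\delta^{-2}\cdot 1$. Knowing merely $Tr_k(\mathbf{e}_k)=\delta^{-2}$ is strictly weaker---the conditional expectation $E_{M_{k-1}}(\mathbf{e}_k)$ could a priori be a non-scalar positive element of $M_{k-1}$ with the right trace, and then $(M_{k-1},\mathbf{e}_k)$ would \emph{not} be the basic construction. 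The paper verifies the full Markov property by an explicit tangle computation, which is the load-bearing fact; your plan should be amended to do the same.

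Secondly, the paper does not actually establish ``$\langle M_{k-1},\mathbf{e}_k\rangle = M_k$'' in this lemma at all---it defers that to the following lemma, where it is deduced from multiplicativity of index: the Jones/Markov relations force $\hat M_n=\langle M_1,\mathbf{e}_2,\ldots,\mathbf{e}_n\rangle$ to be the iterated basic construction, $\hat M_n\subset M_n$, and $[M_n:\hat M_n]=1$ by comparing indices. Your proposal treats generation as something to be shown directly by a tangle argument, which you yourself flag as ``the hard part''; the paper's index argument sidesteps this and is the cleaner route. If you keep the direct approach, you must still supply the Markov property, since the abstract uniqueness-of-basic-construction argument you invoke (``the verified relations plus the trace value characterize it'') needs it.
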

\begin{proof}
We first check that $\mathbf{e}_{k}\in M_{k-2}'\cap M_{k}$. Indeed, \begin{align*}
\mathbf{e}_{k} & =\delta^{-1}\overbrace{\begin{array}{c}\rnode{box}{\psframebox[framearc=0.4]{\vbox to2em{\vfill\hbox to10em{\hfill}}}
	\ncbar[ncurv=-1,arm=-0.4,angle=90,offsetA=0.2,offsetB=0.2,linearc=0.2]{box}{box}
	\ncbar[ncurv=-1,arm=-0.8,angle=90,offsetA=0.95,offsetB=0.95,linearc=0.2]{box}{box}
	\ncbar[ncurv=-1,arm=-0.4,angle=90,offsetA=1.5,offsetB=-1.1,linearc=0.2]{box}{box}
	\ncbar[ncurv=-1,arm=-0.4,angle=90,offsetA=-1.5,offsetB=1.1,linearc=0.2]{box}{box}
	\nput[labelsep=-0.75em,offset=0.55]{90}{box}{\cdots}
	\nput[labelsep=-0.75em,offset=-0.55]{90}{box}{\cdots}
	\nput[labelsep=-0.75em,offset=0]{90}{box}{*}
}\end{array}}^{k\textrm{ strings total}}\end{align*}
and since  $x\in M_{k-2}$, it has the form  \[
x = \begin{array}{c}
	\rnode{x}{\psframebox[framearc=0.4]{\vbox to3em{\vskip 0.3em \hbox to10em{\hfill$\circlenode{a}{A}$\hfill}\vfill}}}
	\ncbar[linearc=0.2, arm=-1.2, angle=90, offsetA=1.5, offsetB=1.5]{x}{x}
	\ncbar[linearc=0.2, arm=-1.0, angle=90, offsetA=1.1, offsetB=1.1]{x}{x}
	\ncdiag[armA=0,armB=-0.1,linearc=0.1,angleA=140,angleB=90,offsetB=-0.9]{a}{x}
	\ncdiag[armA=0,armB=-0.1,linearc=0.1,angleA=40,angleB=90,offsetB=0.9]{a}{x}
	\nput[labelsep=0.05]{88}{a}{*}
	\nput[labelsep=0.17]{55}{a}{\cdots}
	\nput[labelsep=0.17]{125}{a}{\cdots}
		\nput[labelsep=0.37]{115}{a}{\overbrace{\ \qquad\ }^{k-2}}
	\ncdiag[armA=0,armB=-0.1,angleA=80,angleB=90,offsetB=0.2]{a}{x}
	\ncdiag[armA=0,armB=-0.1,angleA=100,angleB=90,offsetB=-0.1]{a}{x}
\end{array}.
\] We now compute:
\begin{eqnarray*} 
\delta \mathbf{e}_{k} \wedge_k x &=& 
\begin{array}{c}
	\rnode{outer}{\psframebox[framearc=0.4]{\vbox to6em{\vfill}
	\rnode{box}{\psframebox[framearc=0.4,linestyle=dashed,linewidth=0.02]{\vbox to3em{\vfill\hbox to10em{\hfill}}}
	\ncbar[ncurv=-1,arm=-0.4,angle=90,offsetA=0.2,offsetB=0.2,linearc=0.2]{box}{box}
	\ncbar[ncurv=-1,arm=-0.8,angle=90,offsetA=0.95,offsetB=0.95,linearc=0.2]{box}{box}
	\ncbar[ncurv=-1,arm=-0.4,angle=90,offsetA=1.5,offsetB=-1.1,linearc=0.2]{box}{box}
	\ncbar[ncurv=-1,arm=-0.4,angle=90,offsetA=-1.5,offsetB=1.1,linearc=0.2]{box}{box}
	\nput[labelsep=-0.75em,offset=0.55]{90}{box}{\cdots}
	\nput[labelsep=-0.75em,offset=-0.55]{90}{box}{\cdots}
	%\nput[labelsep=-0.75em,offset=0]{90}{box}{*}
	} \qquad
	\rnode{x}{\psframebox[framearc=0.4,linestyle=dashed,linewidth=0.02]{\vbox to3em{\vskip 0.3em \hbox to10em{\hfill$\circlenode{a}{A}$\hfill}\vfill}}}
	\ncbar[linearc=0.2, arm=-1.2, angle=90, offsetA=1.5, offsetB=1.5]{x}{x}
	\ncbar[linearc=0.2, arm=-1.0, angle=90, offsetA=1.1, offsetB=1.1]{x}{x}
	\ncdiag[armA=0,armB=-0.1,linearc=0.1,angleA=140,angleB=90,offsetB=-0.9]{a}{x}
	\ncdiag[armA=0,armB=-0.1,linearc=0.1,angleA=40,angleB=90,offsetB=0.9]{a}{x}
	%\nput[labelsep=0]{90}{a}{*}
	\nput[labelsep=0.17]{55}{a}{\cdots}
	\nput[labelsep=0.17]{125}{a}{\cdots}
	\ncdiag[armA=0,armB=-0.1,linearc=0.1,angleA=80,angleB=90,offsetB=0.2]{a}{x}
	\ncdiag[armA=0,armB=-0.1,linearc=0.1,angleA=100,angleB=90,offsetB=-0.1]{a}{x}
	\ncbar[linearc=0.2,arm=0.2,angle=90,offsetA=-1.5,offsetB=-1.5]{box}{x}
	\ncbar[linearc=0.2,arm=0.4,angle=90,offsetA=-1.1,offsetB=-1.1]{box}{x}
	\ncbar[linearc=0.2,arm=0.6,angle=90,offsetA=-0.95,offsetB=-0.9]{box}{x}
	\ncbar[linearc=0.2,arm=0.8,angle=90,offsetA=-0.2,offsetB=-0.1]{box}{x}
	}}
	\ncdiag[arm=0,angle=90,offsetA=1.5,offsetB=-3.8]{box}{outer}
	\ncdiag[arm=0,angle=90,offsetA=1.1,offsetB=-3.4]{box}{outer}
	\ncdiag[arm=0,angle=90,offsetA=0.95,offsetB=-3.25]{box}{outer}
	\ncdiag[arm=0,angle=90,offsetA=0.2,offsetB=-2.5]{box}{outer}
	\ncdiag[arm=0,angle=90,offsetA=-1.1,offsetB=3.4]{x}{outer}
	\ncdiag[arm=0,angle=90,offsetA=-0.2,offsetB=2.45]{x}{outer}
	\ncdiag[arm=0,angle=90,offsetA=-0.9,offsetB=3.15]{x}{outer}
	\ncdiag[arm=0,angle=90,offsetA=-1.5,offsetB=3.75]{x}{outer}
	\nput[labelsep=-0.25,offset=2.25]{90}{outer}{*}
\end{array} \\
&=&\begin{array}{c}
\rnode{x}{\psframebox[framearc=0.4]{\vbox to3em{\vskip 0.3em \hbox to10em{\hfill$\circlenode{a}{A}$\hfill}\vfill}}}
	\ncbar[linearc=0.2, arm=-0.5, angle=90, offsetA=1.5, offsetB=-1.1]{x}{x}
	\ncbar[linearc=0.2, arm=-0.5, angle=90, offsetA=-1.1, offsetB=1.5]{x}{x}
	\ncdiag[armA=0,armB=-0.1,linearc=0.1,angleA=140,angleB=90,offsetB=-0.9]{a}{x}
	\ncdiag[armA=0,armB=-0.1,linearc=0.1,angleA=40,angleB=90,offsetB=0.9]{a}{x}
	\nput[labelsep=0.05]{88}{a}{*}
	\nput[labelsep=0.17]{55}{a}{\cdots}
	\nput[labelsep=0.17]{125}{a}{\cdots}
	\ncdiag[armA=0,armB=-0.1,linearc=0.1,angleA=80,angleB=90,offsetB=0.2]{a}{x}
	\ncdiag[armA=0,armB=-0.1,linearc=0.1,angleA=100,angleB=90,offsetB=-0.1]{a}{x} 
\end{array} \\
&=& \delta x\wedge_k \mathbf{e}_k \qquad \textrm{(by symmetry)}.
\end{eqnarray*}
(here dashed lines indicate removed boxes).

Next, we check that $\mathbf{e}_{k}\wedge_k x\wedge_k \mathbf{e}_{k}=\mathcal{E}_{k-2}(x)\wedge_k \mathbf{e}_{k}$
for $x = \begin{array}{c}
\rnode{x}{\psframebox[framearc=0.4]{\vbox to3em{\vskip 0.3em \hbox to10em{\hfill$\circlenode{a}{A}$\hfill}\vfill}}}
	\ncbar[linearc=0.2, arm=-1.2, angle=90, offsetA=1.5, offsetB=1.5]{x}{x}
	\ncdiag[armA=0,armB=-0.1,linearc=0.1,angleA=140,angleB=90,offsetB=-0.9]{a}{x}
	\ncdiag[armA=0,armB=-0.1,linearc=0.1,angleA=40,angleB=90,offsetB=0.9]{a}{x}
	\nput[labelsep=0.05]{88}{a}{*}
	\nput[labelsep=0.17]{55}{a}{\cdots}
	\nput[labelsep=0.17]{125}{a}{\cdots}
	\ncdiag[armA=0,armB=-0.1,linearc=0.1,angleA=80,angleB=90,offsetB=0.2]{a}{x}
	\ncdiag[armA=0,armB=-0.1,linearc=0.1,angleA=100,angleB=90,offsetB=-0.1]{a}{x} 
			\nput[labelsep=0.37]{115}{a}{\overbrace{\ \qquad\ }^{k-1}}
\end{array} \in M_{k-1}.$  Note that it follows from the formula for $\mathcal{E}_{k-2}$ (or from an explicit computation using the trace) that \[
{E}_{k-2}(A) = \delta^{-1}  \begin{array}{c}
\rnode{x}{\psframebox[framearc=0.4]{\vbox to3em{\vskip 0.3em \hbox to8em{\hfill$\circlenode{a}{A}$\hfill}\vfill}}}
	\ncdiag[armA=0,armB=-0.1,linearc=0.1,angleA=140,angleB=90,offsetB=-0.9]{a}{x}
	\ncdiag[armA=0,armB=-0.1,linearc=0.1,angleA=40,angleB=90,offsetB=0.9]{a}{x}
	\nput[labelsep=0.05]{88}{a}{*}
	\nput[labelsep=0.17]{55}{a}{\cdots}
	\nput[labelsep=0.17]{125}{a}{\cdots}
	\ncdiag[armA=0,armB=-0.1,linearc=0.1,angleA=80,angleB=90,offsetB=0.2]{a}{x}
	\ncdiag[armA=0,armB=-0.1,linearc=0.1,angleA=100,angleB=90,offsetB=-0.1]{a}{x} 
	\nput[labelsep=0.37]{115}{a}{\overbrace{\ \qquad\ }^{k-2}}
	\ncloop[linearc=0.1,angleA=180,angleB=0,loopsize=0.6]{a}{a}
\end{array}
\]  Now,
\begin{eqnarray*}
\mathbf{e}_k \wedge_k x \wedge_k \mathbf{e}_k &=& \delta^{-2}
	\begin{array}{c}\vbox to4.5em{\vfill}
	\rnode{outer}{\psframebox[framearc=0.4]{\vbox to6em{\vfill}
	\rnode{box}{\psframebox[framearc=0.4,linestyle=dashed,linewidth=0.02]{\vbox to3em{\vfill\hbox to10em{\hfill}}}}
	\ncbar[ncurv=-1,arm=-0.4,angle=90,offsetA=0.2,offsetB=0.2,linearc=0.2]{box}{box}
	\ncbar[ncurv=-1,arm=-0.8,angle=90,offsetA=0.95,offsetB=0.95,linearc=0.2]{box}{box}
	\ncbar[ncurv=-1,arm=-0.4,angle=90,offsetA=1.5,offsetB=-1.1,linearc=0.2]{box}{box}
	\ncbar[ncurv=-1,arm=-0.4,angle=90,offsetA=-1.5,offsetB=1.1,linearc=0.2]{box}{box}
	\nput[labelsep=-0.75em,offset=0.55]{90}{box}{\cdots}
	\nput[labelsep=-0.75em,offset=-0.55]{90}{box}{\cdots}
	%\nput[labelsep=-0.75em,offset=0]{90}{box}{*}
	%
	\quad
	\rnode{x}{\psframebox[framearc=0.4,linestyle=dashed,linewidth=0.02]{\vbox to3em{\vskip 0.3em \hbox to10em{\hfill$\circlenode{a}{A}$\hfill}\vfill}}}
	\ncbar[linearc=0.2, arm=-1.2, angle=90, offsetA=1.5, offsetB=1.5]{x}{x}
	\ncdiag[arm=0,angleA=140,angleB=90,offsetB=-0.9]{a}{x}
	\ncdiag[arm=0,angleA=40,angleB=90,offsetB=0.9]{a}{x}
	%\nput[labelsep=0]{90}{a}{*}
	\nput[labelsep=0.17]{55}{a}{\cdots}
	\nput[labelsep=0.17]{125}{a}{\cdots}
	\ncdiag[arm=0,angleA=60,angleB=90,offsetB=0.2]{a}{x}
	\ncdiag[arm=0,angleA=120,angleB=90,offsetB=-0.2]{a}{x}
	\ncdiag[arm=0,angleA=160,angleB=90,offsetB=-1.3]{a}{x}
	\ncdiag[arm=0,angleA=20,angleB=90,offsetB=1.3]{a}{x}
	\quad
	\rnode{box1}{\psframebox[framearc=0.4,linestyle=dashed,linewidth=0.02]{\vbox to3em{\vfill\hbox to10em{\hfill}}}}
	\ncbar[ncurv=-1,arm=-0.4,angle=90,offsetA=0.2,offsetB=0.2,linearc=0.2]{box1}{box1}
	\ncbar[ncurv=-1,arm=-0.8,angle=90,offsetA=0.95,offsetB=0.95,linearc=0.2]{box1}{box1}
	\ncbar[ncurv=-1,arm=-0.4,angle=90,offsetA=1.5,offsetB=-1.1,linearc=0.2]{box1}{box1}
	\ncbar[ncurv=-1,arm=-0.4,angle=90,offsetA=-1.5,offsetB=1.1,linearc=0.2]{box1}{box1}
	\nput[labelsep=-0.75em,offset=0.55]{90}{box1}{\cdots}
	\nput[labelsep=-0.75em,offset=-0.55]{90}{box1}{\cdots}
	%\nput[labelsep=-0.75em,offset=0]{90}{box1}{*}
	}}
	\ncbar[linearc=0.2,arm=0.2,angle=90,offsetA=-1.5,offsetB=-1.5]{box}{x}
	\ncbar[linearc=0.2,arm=0.4,angle=90,offsetA=-1.1,offsetB=-1.3]{box}{x}
	\ncbar[linearc=0.2,arm=0.6,angle=90,offsetA=-0.95,offsetB=-0.9]{box}{x}
	\ncbar[linearc=0.2,arm=0.8,angle=90,offsetA=-0.2,offsetB=-0.2]{box}{x}
	\ncbar[linearc=0.2,arm=0.2,angle=90,offsetA=-1.5,offsetB=-1.5]{x}{box1}
	\ncbar[linearc=0.2,arm=0.4,angle=90,offsetA=-1.3,offsetB=-1.1]{x}{box1}
	\ncbar[linearc=0.2,arm=0.6,angle=90,offsetA=-0.9,offsetB=-0.95]{x}{box1}
	\ncbar[linearc=0.2,arm=0.8,angle=90,offsetA=-0.2,offsetB=-0.2]{x}{box1}
	\ncdiag[arm=0,angle=90,offsetA=1.5,offsetB=-5.67]{box}{outer}
	\ncdiag[arm=0,angle=90,offsetA=1.1,offsetB=-5.27]{box}{outer}
	\ncdiag[arm=0,angle=90,offsetA=0.95,offsetB=-5.12]{box}{outer}
	\ncdiag[arm=0,angle=90,offsetA=0.2,offsetB=-4.37]{box}{outer}
		\nput[labelsep=-0.25,offset=4.1]{90}{outer}{*}
	\ncdiag[arm=0,angle=90,offsetA=-1.5,offsetB=5.67]{box1}{outer}
	\ncdiag[arm=0,angle=90,offsetA=-1.1,offsetB=5.27]{box1}{outer}
	\ncdiag[arm=0,angle=90,offsetA=-0.95,offsetB=5.12]{box1}{outer}
	\ncdiag[arm=0,angle=90,offsetA=-0.2,offsetB=4.37]{box1}{outer}
	\ncdiag[armA=1.13,nodesepA=-0.02,linearc=0.1,armB=-0.15,angleA=90,offsetA=0.075,angleB=90,offsetB=-0.5]{a}{outer}
	\ncdiag[armA=1.13,nodesepA=-0.02,linearc=0.1,armB=-0.15,angleA=90,offsetA=-0.075,angleB=90,offsetB=0.5]{a}{outer}
	\nput[labelsep=-0.2]{90}{outer}{\cdots}
	\end{array}\\
	&=&
	\delta^{-2} \begin{array}{c}
\rnode{x}{\psframebox[framearc=0.4]{\vbox to3em{\vskip 0.3em \hbox to10em{\hfill$\circlenode{a}{A}$\hfill}\vfill}}}
	\ncbar[linearc=0.2, arm=-0.5, angle=90, offsetA=1.5, offsetB=-1.1]{x}{x}
	\ncbar[linearc=0.2, arm=-0.5, angle=90, offsetA=-1.1, offsetB=1.5]{x}{x}
	\ncdiag[armA=0,armB=-0.1,linearc=0.1,angleA=140,angleB=90,offsetB=-0.9]{a}{x}
	\ncdiag[armA=0,armB=-0.1,linearc=0.1,angleA=40,angleB=90,offsetB=0.9]{a}{x}
	\ncloop[linearc=0.1,angleA=180,angleB=0,loopsize=0.6]{a}{a}
	\nput[labelsep=0.05]{88}{a}{*}
	\nput[labelsep=0.17]{55}{a}{\cdots}
	\nput[labelsep=0.17]{125}{a}{\cdots}
	\ncdiag[armA=0,armB=-0.1,linearc=0.1,angleA=80,angleB=90,offsetB=0.2]{a}{x}
	\ncdiag[armA=0,armB=-0.1,linearc=0.1,angleA=100,angleB=90,offsetB=-0.1]{a}{x} 
	\end{array} 
	\quad=\quad \delta^{-1}
	\begin{array}{c}
\rnode{x}{\psframebox[framearc=0.4]{\vbox to3em{\vskip 0.3em \hbox to10em{\hfill$\ovalnode{a}{E_{M_{k-2}} (A)}$\hfill}\vfill}}}
	\ncbar[linearc=0.2, arm=-0.4, angle=90, offsetA=1.5, offsetB=-1.1]{x}{x}
	\ncbar[linearc=0.2, arm=-0.4, angle=90, offsetA=-1.1, offsetB=1.5]{x}{x}
	\ncdiag[armA=0,armB=-0.1,linearc=0.1,angleA=150,angleB=90,offsetB=-0.9]{a}{x}
	\ncdiag[armA=0,armB=-0.1,linearc=0.1,angleA=30,angleB=90,offsetB=0.9]{a}{x}
	\nput[labelsep=0.05]{88}{a}{*}
	\nput[labelsep=0.1]{55}{a}{\cdots}
	\nput[labelsep=0.1]{125}{a}{\cdots}
	\ncdiag[armA=0,armB=-0.1,linearc=0.1,angleA=80,angleB=90,offsetB=0.2]{a}{x}
	\ncdiag[armA=0,armB=-0.1,linearc=0.1,angleA=100,angleB=90,offsetB=-0.1]{a}{x} 
	\end{array} \\
	&=& \mathcal{E}_{k-2} (x)\wedge_{k}\mathbf{e}_{k}.
\end{eqnarray*}
Finally, we check that the trace $Tr_{k}$ is $\lambda$-Markov.  Let $x\in M_{k-1}$.  Then: \
\begin{eqnarray*}
\delta Tr_k(x \wedge_k \mathbf{e}_k) &=& Tr_k  \left\{\begin{array}{c}
\rnode{outer}{\psframebox[framearc=0.4]{
	\vbox to4.3em{\vfill\hbox{$
	\begin{array}{cc}\rnode{box}{\psframebox[framearc=0.4]{\vbox to2em{\vfill\hbox to10em{\hfill}\vfill}
	\ncbar[ncurv=-1,arm=-0.4,angle=90,offsetA=0.2,offsetB=0.2,linearc=0.2]{box}{box}
	\ncbar[ncurv=-1,arm=-0.8,angle=90,offsetA=0.95,offsetB=0.95,linearc=0.2]{box}{box}
	\ncbar[ncurv=-1,arm=-0.4,angle=90,offsetA=1.5,offsetB=-1.1,linearc=0.2]{box}{box}
	\ncbar[ncurv=-1,arm=-0.4,angle=90,offsetA=-1.5,offsetB=1.1,linearc=0.2]{box}{box}
	\nput[labelsep=-0.75em,offset=0.55]{90}{box}{\cdots}
	\nput[labelsep=-0.75em,offset=-0.55]{90}{box}{\cdots}
	%\nput[labelsep=-0.75em,offset=0]{90}{box}{*}
}} &  \rnode{f1}{}\quad\vbox to2em{\hbox{$ \rnode{x}{\psframebox[framearc=0.4]{\vbox to 1em{\vfill \hbox to6em{\hfill $x$\hfill} \vfill}}}
	\quad\rnode{f2}{}\quad$}\vfill}
	\ncbar[angle=90,offsetA=-1.5,offsetB=0,arm=0.2,linearc=0.1]{box}{f1}
	\ncbar[angle=90,offsetA=-0.95,offsetB=-0.8,arm=0.6,linearc=0.1]{box}{x}
	\ncbar[angle=90,offsetA=-1.1,offsetB=-1.0,arm=0.4,linearc=0.1]{box}{x}
	\ncbar[angle=90,offsetA=-0.2,offsetB=-0.2,arm=0.8,linearc=0.1]{box}{x}
	\nput[offset=0.5]{90}{x}{\cdots}
	\ncbar[angle=-90,arm=0.13,linearc=0.2]{f1}{f2}
	\ncdiag[arm=0,angle=90,offsetA=-0.2,offsetB=2.125]{x}{outer}
	\ncdiag[arm=0,angle=90,offsetA=-1,offsetB=2.925]{x}{outer}	
	\ncdiag[arm=0,angle=90,offsetA=0,offsetB=3.45]{f2}{outer}
	\nput[offset=-0.6]{90}{x}{\cdots}
	\ncdiag[arm=0,angle=90,offsetA=1.5,offsetB=-3.4]{box}{outer}
	\ncdiag[arm=0,angle=90,offsetA=1.1,offsetB=-3.0]{box}{outer}
	\ncdiag[arm=0,angle=90,offsetA=0.95,offsetB=-2.85]{box}{outer}
	\ncdiag[arm=0,angle=90,offsetA=0.2,offsetB=-2.1]{box}{outer}
	\nput[labelsep=-0.25,offset=1.9]{90}{outer}{*}
\end{array}$}}}}\end{array}\right\} \\[1em]
 &=& Tr_k \left\{ \begin{array}{c}
 	\rnode{outer}{\psframebox[framearc=0.4]{
			\psmatrix[colsep=0.4,rowsep=0.3] 
			\\				& 
				\qquad
				\rnode{f}{} 
				\quad 
				\rnode{x}{\psframebox[framearc=0.4]{
					\vbox to 1em{\vfill \hbox to6em{\hfill $x$\hfill} \vfill}
				}}
				\quad
				\rnode{f2}{}
				\quad
			\\ \vbox{\vfill}
			\endpsmatrix
		}
	}
 \end{array}
	\ncbar[angle=90,linearc=0.1,arm=0.4,offsetB=-0.8]{f}{x}
	\ncbar[angle=-90,linearc=0.1,arm=0.4]{f}{f2}
	\ncdiag[angle=90,arm=0,offsetB=1.92]{f2}{outer}
	\ncdiag[angle=90,arm=0,offsetA=0.385,offsetB=0]{x}{outer}
	\ncdiag[angle=90,arm=0,offsetA=-0.2,offsetB=0.585]{x}{outer}
	\nput[offset=-0.42,labelsep=-0.25]{90}{outer}{*}
	\nput[offset=0.1,labelsep=0.1]{90}{x}{\cdots}
	\ncdiag[angle=90,arm=0,offsetA=-0.8,offsetB=1.185]{x}{outer}
		\nput[offset=-0.5,labelsep=0.1]{90}{x}{\cdots}
	\ncbar[angle=90,linearc=0.1,arm=-0.4,offsetA=2.02,offsetB=-1.52]{outer}{outer}
 \right\} \\[1em]
 &=& \begin{array}{c}
\psmatrix[rowsep=0.4,colsep=0.2]
	\\
 	\rnode{f1}{}
	\quad&
	\rnode{f2}{}
	&
	\rnode{f3}{}
	&
	\rnode{T}{
		\psframebox[framearc=0.4]{
			\vbox to1em {
				\vfill
				\hbox to4em{\hfill$T_n$\hfill}
				\vfill
			}
		}
	} \\
	&&& \rnode{x} {
		\psframebox[framearc=0.4]{
			\vbox to1em {
				\vfill
				\hbox to10em{\hfill$x$\hfill}
				\vfill
			}
		}
	}
	& \rnode{f4}{}
	\ncdiag[angleA=90,angleB=-90,arm=0,offsetA=0.6,offsetB=0.6]{x}{T}
	\ncdiag[angleA=90,angleB=-90,arm=0,offsetA=0.3,offsetB=0.3]{x}{T}
	\nput{90}{x}{\cdots}
	\nput[offset=0.7,labelsep=0.1]{90}{x}{*}
	\ncdiag[angleA=90,angleB=-90,arm=0,offsetA=-0.3,offsetB=-0.3]{x}{T}
	\ncdiag[angleA=90,angleB=-90,arm=0,offsetA=-0.6,offsetB=-0.6]{x}{T}
	\ncbar[angle=-90,linearc=0.1,arm=0.3]{f1}{f2}
	\ncbar[angle=90,arm=1.2,linearc=0.2,offsetA=1.0,offsetB=1.0]{x}{x}
	\ncbar[angle=90,arm=1.4,linearc=0.2,offsetA=1.4,offsetB=1.4]{x}{x}
	\ncbar[angle=90,arm=0.4,linearc=0.2,offsetA=0,offsetB=-1.6]{f3}{x}
	\ncbar[angle=-90,linearc=0.4]{f3}{f4}
	\ncbar[angle=90,linearc=0.4,arm=1.2]{f4}{f1}
	\ncbar[angle=90,linearc=0.4,arm=1.0,offsetB=1.6]{f2}{x}
	\nput[offset=1.2]{90}{x}{\hbox{\tiny$\cdots$}}
	\nput[offset=-1.2]{90}{x}{\hbox{\tiny$\cdots$}}
\endpsmatrix
\end{array}
\\[1em]
&=& 
\begin{array}{c}
\psmatrix[rowsep=0.4,colsep=0.2]
	\\
	&
	\rnode{T}{
		\psframebox[framearc=0.4]{
			\vbox to1em {
				\vfill
				\hbox to4em{\hfill$T_n$\hfill}
				\vfill
			}
		}
	} \\
	\rnode{f3}{}
         & \rnode{x} {
		\psframebox[framearc=0.4]{
			\vbox to1em {
				\vfill
				\hbox to10em{\hfill$x$\hfill}
				\vfill
			}
		}
	}
	& \rnode{f4}{}
	\ncdiag[angleA=90,angleB=-90,arm=0,offsetA=0.6,offsetB=0.6]{x}{T}
	\ncdiag[angleA=90,angleB=-90,arm=0,offsetA=0.3,offsetB=0.3]{x}{T}
	\nput{90}{x}{\cdots}
	\nput[offset=0.7,labelsep=0.1]{90}{x}{*}
	\ncdiag[angleA=90,angleB=-90,arm=0,offsetA=-0.3,offsetB=-0.3]{x}{T}
	\ncdiag[angleA=90,angleB=-90,arm=0,offsetA=-0.6,offsetB=-0.6]{x}{T}
	\ncbar[angle=90,arm=1.2,linearc=0.2,offsetA=1.0,offsetB=1.0]{x}{x}
	\ncbar[angle=90,arm=1.4,linearc=0.2,offsetA=1.4,offsetB=1.4]{x}{x}
	\ncbar[angle=-90,linearc=0.2]{f3}{f4}
	\ncbar[angle=90,linearc=0.2,arm=0.4,offsetB=1.6]{f4}{x}
	\ncbar[angle=90,arm=0.4,linearc=0.2,offsetA=0,offsetB=-1.6]{f3}{x}
	\nput[offset=1.2]{90}{x}{\hbox{\tiny$\cdots$}}
	\nput[offset=-1.2]{90}{x}{\hbox{\tiny$\cdots$}}
\endpsmatrix
\end{array}
=
\begin{array}{c}
\psmatrix[rowsep=0.4,colsep=0.2]
	\\
	\rnode{T}{
		\psframebox[framearc=0.4]{
			\vbox to1em {
				\vfill
				\hbox to4em{\hfill$T_n$\hfill}
				\vfill
			}
		}
	} \\
         \rnode{x} {
		\psframebox[framearc=0.4]{
			\vbox to1em {
				\vfill
				\hbox to10em{\hfill$x$\hfill}
				\vfill
			}
		}
	}
	\ncdiag[angleA=90,angleB=-90,arm=0,offsetA=0.6,offsetB=0.6]{x}{T}
	\ncdiag[angleA=90,angleB=-90,arm=0,offsetA=0.3,offsetB=0.3]{x}{T}
	\nput{90}{x}{\cdots}
	\nput[offset=0.7,labelsep=0.1]{90}{x}{*}
	\ncdiag[angleA=90,angleB=-90,arm=0,offsetA=-0.3,offsetB=-0.3]{x}{T}
	\ncdiag[angleA=90,angleB=-90,arm=0,offsetA=-0.6,offsetB=-0.6]{x}{T}
	\ncbar[angle=90,arm=1.2,linearc=0.2,offsetA=1.0,offsetB=1.0]{x}{x}
	\ncbar[angle=90,arm=1.4,linearc=0.2,offsetA=1.4,offsetB=1.4]{x}{x}
	\ncbar[angle=90,arm=1.6,linearc=0.2,offsetA=1.6,offsetB=1.6]{x}{x}
	\nput[offset=1.2]{90}{x}{\hbox{\tiny$\cdots$}}
	\nput[offset=-1.2]{90}{x}{\hbox{\tiny$\cdots$}}
	\endpsmatrix
\end{array}
\\[1em]
&=&Tr_{k-1}(x).
\end{eqnarray*}
\end{proof}
\begin{lem}
The algebras $M_{0}\subset M_{1}\subset M_{2}\subset\cdots$ are exactly
the tower obtained by iterating the basic construction for $M_{0}\subset M_{1}$. 
\end{lem}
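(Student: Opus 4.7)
The plan is to apply Jones' uniqueness theorem for the basic construction at each level. Recall that, given a trace-preserving inclusion $N\subseteq M$ of II$_{1}$ factors with Markov trace, the basic construction $\langle M,e_{N}\rangle$ is characterized up to canonical isomorphism as any tracial II$_{1}$ extension $\tilde{M}\supseteq M$ generated as a von Neumann algebra by $M$ together with a projection $f$ satisfying the Jones relation $fxf=E_{N}(x)f$ for $x\in M$ and the Markov trace identity $\tilde{Tr}(yf)=[M:N]^{-1}Tr(y)$ for $y\in M$. The preceding lemma supplies exactly these data for $(N,M,\tilde{M},f)=(M_{k-1},M_{k},M_{k+1},\mathbf{e}_{k+1})$: the algebraic identity $\mathbf{e}_{k+1}\,x\,\mathbf{e}_{k+1}=E_{M_{k-1}}(x)\mathbf{e}_{k+1}$ for $x\in M_{k}$, and the Markov identity $\delta\,Tr_{k+1}(x\mathbf{e}_{k+1})=Tr_{k}(x)$. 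Hence the only remaining step is the generation assertion $M_{k+1}=W^{*}(M_{k}\cup\{\mathbf{e}_{k+1}\})$.

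I would proceed by induction on $k$, propagating simultaneously the factoriality of $M_k$, the Jones index $[M_k:M_{k-1}]=\delta$, and the identification $M_{k+1}=\langle M_k,\mathbf{e}_{k+1}\rangle$. The base case uses the factoriality of $M_0$ proved in Section~4 together with the Markov data for $M_0\subset M_1$ supplied by the previous lemma applied at level $k=2$. For the inductive step, set $N_k:=W^{*}(M_k\cup\{\mathbf{e}_{k+1}\})\subseteq M_{k+1}$. By Jones' theorem, applied to the factor inclusion $M_{k-1}\subset M_k$ of index $\delta$ (inductive hypothesis) equipped with the Jones projection $\mathbf{e}_{k+1}$, the algebra $N_k$ is canonically isomorphic to the basic construction of $M_{k-1}\subset M_k$; in particular $N_k$ is a II$_1$ factor with $[N_k:M_k]=\delta$.

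The main obstacle, and the heart of the argument, is to show $N_k=M_{k+1}$. I would compute the Jones index $[M_{k+1}:M_k]$ via the Pimsner--Popa probabilistic characterization. The Markov identity gives $\mathcal{E}_{k+1}(\mathbf{e}_{k+1})=\delta^{-1}\cdot 1_{M_k}$; combined with the explicit Fock-space formula $i_{k+1}^{k}(Y)=\sum_{e}\sigma(e)^{-1}\ell(e)Y\ell(e)^{*}$ for the inclusion $M_k\hookrightarrow M_{k+1}$, this lets one exhibit a finite Pimsner--Popa basis for $M_{k+1}$ over $M_k$ indexed by edges $e$ of $\Gamma$ at the appropriate vertex and weighted by the Perron--Frobenius factors $\sigma(e)$. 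The trace sum of basis elements collapses via the Perron--Frobenius equation $\sum_{s(e)=v}\mu(t(e))=\delta\mu(v)$ to give $[M_{k+1}:M_k]\leq\delta$. The reverse inequality follows trivially from the inclusion $N_k\subseteq M_{k+1}$ together with $[N_k:M_k]=\delta$. Equality $[M_{k+1}:M_k]=[N_k:M_k]$ then forces $M_{k+1}=N_k$, which also upgrades the induction hypothesis by establishing factoriality of $M_{k+1}$ as the basic construction of a factor inclusion. This closes the induction and identifies the chain $M_0\subset M_1\subset\cdots$ with the iterated basic construction tower for $M_0\subset M_1$, with Jones projections $\mathbf{e}_{k+1}$.
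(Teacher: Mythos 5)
Your overall strategy coincides with the paper's: use the Jones relation and Markov identity from the preceding lemma, then squeeze $\langle M_k,\mathbf{e}_{k+1}\rangle = M_{k+1}$ out of multiplicativity of the Jones index. The paper runs this as a minimal-counterexample argument (choose the least $n$ with $\hat{M}_n\subsetneq M_n$, observe that $\mathbf{e}_{n+1}$ is simultaneously a Jones projection for $M_{n-1}\subset M_n$ and for $\hat{M}_{n-1}\subset\hat{M}_n$, deduce that both indices equal $\delta$ from the common Markov constant, and conclude $[M_n:\hat{M}_n]=1$), whereas you run an upward induction, which is a cosmetic difference. The real divergence is in how the bound $[M_{k+1}:M_k]\leq\delta$ is obtained.

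There is a genuine gap in your Pimsner--Popa step. The Fock-space formula $i_{k+1}^{k}(Y)=\sum_{e}\sigma(e)^{-1}\ell(e)Y\ell(e)^{*}$ describes the inclusion $\mathfrak{M}_k\hookrightarrow\mathfrak{M}_{k+1}$, i.e.\ the inclusion of the von Neumann algebras completing $Gr_kP^{\Gamma}\subset Gr_{k+1}P^{\Gamma}$ built from the \emph{ambient graph} planar algebra. The operators $\ell(e)$ and the natural candidate basis elements built from them live in $\mathcal{M}_{k+1}$, not in $M_{k+1}=W^{*}(Gr_{k+1}P,Tr_{k+1})$, which is the (typically much smaller) subalgebra generated by the subfactor planar algebra $P\subset P^{\Gamma}$. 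A Pimsner--Popa basis for $M_{k+1}$ over $M_k$ must consist of elements of $M_{k+1}$; your proposed basis does not, and moreover when $\Gamma$ is infinite the edge set is infinite, so the basis you describe is not even finite. Consequently the estimate $[M_{k+1}:M_k]\leq\delta$ is not established. The paper avoids this entirely: it never computes $[M_{n}:M_{n-1}]$ from a basis but instead notes that the Jones projection $\mathbf{e}_{n+1}$, together with the Markov identity $\delta\,Tr(x\mathbf{e}_{n+1})=Tr(x)$, pins down both $[M_{n}:M_{n-1}]$ and $[\hat{M}_n:M_{n-1}]$ to the same value, so the upper bound comes for free from the lower bound. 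If you replace your Fock-space Pimsner--Popa estimate with the observation that the same projection and the same restricted trace serve as Markov data for both inclusions (or, equivalently, invoke the Markov property of $\mathbf{e}_{k+2}\in M_{k+2}$ for $M_k\subset M_{k+1}$), your induction closes and becomes essentially the paper's argument.
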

\begin{proof}
We first note that because of the Markov property and the Jones relations
between the projections $\mathbf{e}_{n}$, the algebras $\hat{M}_{n}=\langle M,\mathbf{e}_{1},\ldots,\mathbf{e}_{n-1}\rangle$,
$n\geq2$ are exactly the algebras appearing in the basic construction
for $M_{0}\subset M_{1}$. Hence clearly $\hat{M}_{n}\subset M_{n}$.
Now suppose that for some $n$ this inclusion were strict; choose
smallest such $n$ (necessarily $>1$ since $M_{0}=\hat{M}_{0}$ and
$M_{1}=\hat{M}_{1}$). Then the projection $\mathbf{e}_{n+1}$ is
the Jones projection for $M_{n-1}\subset M_{n}$ and also for $M_{n-1}=\hat{M}_{n-1}\subset\hat{M}_{n}$.
Thus the index of $M_{n-1}\subset M_{n}$ is the same as that of $M_{n-1}\subset\hat{M}_{n}$.
But since $\hat{M}_{n}\subset M_{n}$, multiplicativity of index entails
$[M_{n}:\hat{M}_{n}]=1$ and thus $M=\hat{M}_{n}$, a contradiction. 
\end{proof}

\subsection{The planar algebra structure on the higher relative commutants.}

At this stage we have constructed a (II$_{1}$) subfactor $M_{0}\subset M_{1}$
and its tower $M_{k}$ as the completions of $Gr_{k}P$. We have also
shown that $M_{0}'\cap M_{k}$ is precisely subspace $P_{k}\subset Gr_{k}(P)$. 

\begin{thm}
The linear identification of $P_{k}$ and $M_{0}'\cap M_{j}$ constructed
in Theorem \ref{thm:HRcommutsAsAlgArePks} is an isomorphism between
$P$ and the planar algebra of the subfactor $P(M_{0}\subset M_{1})$. 

In particular, any subfactor planar algebra can be naturally realized
as the planar algebra of the II$_{1}$ subfactor $P(W^{*}(Gr_{0}P,Tr_{0})\subset W^{*}(Gr_{1}P,Tr_{1}))$
. 
\end{thm}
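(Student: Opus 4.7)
The plan is to show that the two subfactor planar algebra structures on the sequence $(P_{k,+})$ coincide: the original structure of $P$, and the structure on $M_0'\cap M_k$ coming from viewing these higher relative commutants as the standard invariant of the subfactor $M_0\subset M_1$. Both sides are subfactor planar algebras in the sense of Definition \ref{subfactorplanar}, so by Popa's $\lambda$-lattice axiomatization of standard invariants it is enough to check matching of a generating collection of tangles: the multiplication on each $P_k$, the unital inclusions $P_k\hookrightarrow P_{k+1}$ (adding a through-string on the right, resp.\ on the left), the Jones projections $\mathbf{e}_k\in P_k$, the Markov trace, and the conditional expectations $E_{k-1}\colon P_k\to P_{k-1}$.

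Most of this matching is already in place. Agreement of multiplication is exactly Theorem \ref{thm:HRcommutsAsAlgArePks}. Agreement of Jones projections is the preceding lemma, which identifies $\mathbf{e}_k\in P_{k,+}$ with the Jones projection for $M_{k-2}\subset M_{k-1}$. Agreement on the Markov trace follows from the lemma just after Definition \ref{def:Trk}, which states that $Tr_k$ restricted to $P_k\subset Gr_k P$ coincides with the usual planar-algebra trace on $P_k$. Agreement on the conditional expectations reduces to uniqueness of the trace-preserving conditional expectation $M_k\to M_{k-1}$: the explicit formula for $E_k$ given in the construction is precisely the closing-off tangle on the rightmost two strings, scaled by $\delta^{-1}$, which is the usual planar-algebra conditional expectation.

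The remaining step is matching on the inclusion tangles. The planar-algebra inclusion $P_k\hookrightarrow P_{k+1}$ that adds a through-string on the right should coincide under the identification with $i_{k+1}^k\colon c_k(w)\mapsto\sum_e\sigma(e)^{-1}\ell(e)c_k(w)\ell(e)^*$ from the construction of the tower. Unwinding definitions, this is a direct planar-diagram computation: wrapping a through-string on the right of the loop $w$ amounts, on the Fock-space side, to summing a creation-then-annihilation over all compatible edges, weighted by $\sigma(e)^{-1}=\Vert e\Vert^2$; the latter comes precisely from the curvature factor $c(\sigma)$ appearing in the graph-planar-algebra action. The other inclusion (adding a through-string on the left) matches by the same argument.

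The main obstacle I expect is the diagrammatic bookkeeping. The graded multiplication $\wedge_k$ and the Fock-space operators $c_k(w)=\ell(e_1)\cdots\ell(e_k)c(w_0)\ell(f_k)^*\cdots\ell(f_1)^*$ are defined with specific shading conventions and specific orderings of creation/annihilation operators, and each elementary tangle must be carefully tracked through these conventions, in particular through the shift $D_k$ relating the planar-algebra picture of a loop to the Fock-space factorization. Once this verification is completed, all generating tangles match and the planar algebra isomorphism $P\cong P(M_0\subset M_1)$ follows; in particular, $P$ is realized as the standard invariant of the explicit II$_1$ subfactor $M_0\subset M_1$, yielding the second part of the theorem and reproving Popa's result.
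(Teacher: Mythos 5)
Your proposal has the right overall shape (verify that the generating tangles match under the identification) but it misidentifies the hard step and omits one entirely.

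First, you do not address extremality of $M_0\subset M_1$. For the higher relative commutants of a subfactor to carry the full spherical planar algebra structure — invariance under rotation of $0$-tangles around the sphere, equivalently uniqueness of the trace on $M_0'\cap M_k$ — you need extremality of the inclusion. This is the paper's item $1$, and it must be checked (it follows here from the spherical invariance built into Definition \ref{subfactorplanar}, condition $4$, plus the construction); it is not a consequence of the other matchings you list.

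Second, and more importantly, the conditional expectation you verify is the wrong one. You check that $E_{k-1}\colon M_k\to M_{k-1}$ is the closing-off tangle, but this is automatic once you have the Jones projections and the Markov trace: it carries no independent information. The genuinely nontrivial structure is the conditional expectation $E_{M_1'}\colon M_0'\cap M_k\to M_1'\cap M_k$ — the "left" expectation, which closes off the leftmost pair of strings rather than the rightmost. This is what encodes the shaded half of the planar algebra (the $P_{k,-}$ spaces, equivalently how $M_1$ sits inside $M_2$), and it is not determined by the data you list. In the paper's proof this is item $5$, explicitly singled out as "the only one that requires any thought," and it is proved by two nontrivial diagrammatic claims: (a) every element of $M_1'\cap M_k$ lies in the image of a specific annular inclusion tangle from $M_0'\cap M_{k-1}$, which requires a dimension count from general subfactor theory, and (b) a computation identifying $E_{M_1'}(A)$ with $\delta^{-1}$ times the closed-off tangle on the left. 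Meanwhile, the step you single out as "the main obstacle" — matching the inclusion tangles $P_k\hookrightarrow P_{k+1}$ — is, according to the paper, "just an obvious picture." So the proposal is misweighted: it spends effort on the easy part and leaves out the part that actually needs an argument.
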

The second part of the theorem gives an alternative proof of a result
of Popa \cite{popa:markov,popa:standardlattice,shlyakht-popa:universal}.

\begin{proof}
We have seen in Theorem \ref{thm:HRcommutsAsAlgArePks} that the multiplication
induced by $Gr_{k}(P)$ (hence $M_{k}$) on $P_{k}$ is precisely
that of the multiplication tangle. By \cite{jones:planar}, to conclude that the
planar algebra structure defined on $P$ by this identification with
the higher relative commutants for $M_{0}\subset M_{1}$ we have to
check the following.

1) That $M_{0}\subset M_{1}$ is extremal (which means there is only
one trace on the $M_{0}'\cap M_{k}$, that of $M_{k}$).

2) The Jones projections $\mathbf{e}_{i}$ of the tower are ($\frac{1}{\delta}$
times) the diagrammatic $\mathbf{e}_{i}$'s.

3) The inclusion of $M_{0}'\cap M_{k}$ in $M_{0}'\cap M_{k+1}$ is
given by the appropriate tangle.

4) The trace on $M_{0}'\cap M_{k}$ given by restricting the trace
on $M_{k}$ is given by the appropriate tangle.

5) The projection from $M_{0}'\cap M_{k}$ onto $M_{1}'\cap M_{k}$
is given by the appropriate tangle.

For these, 1) follows from the definition of extremality in \cite{pimsner-popa:entropyIndex,popa:classificationIIacta} 
and a simple diagrammatic manipulation involving spherical invariance
of the partition function. 2) was proved as part of Theorem \ref{thm:HRcommutsAsAlgArePks}.
Properties 3) and 4) are just obvious pictures. The only one that
requires any thought is 5) which we now prove.
\begin{claim}
Any element in $M_{1}'\cap M_{k}$ is in the image of the map from
$M_{0}'\cap M_{k-1}$ to $M_{0}'\cap M_{k}$ defined by the following
annular tangle:\[
A\mapsto \begin{array}{c}\rnode{outer}{\psframebox[framearc=0.4]{\vbox to3em{\vfill}
\qquad\qquad\circlenode{x}{\ A\ }\qquad\qquad
\ncdiag[arm=0,angleA=30,angleB=30]{x}{outer}
\ncdiag[arm=0,angleA=60,angleB=60]{x}{outer}
\ncdiag[arm=0,angleA=120,angleB=120]{x}{outer}
\ncdiag[arm=0,angleA=150,angleB=150]{x}{outer}
\nput[labelsep=-0.25]{135}{outer}{\cdots}
\nput[labelsep=-0.25]{45}{outer}{\cdots}
\ncbar[arm=-0.3,linearc=0.2,angle=90,offsetA=0.2,offsetB=0.2]{outer}{outer}
\nput[labelsep=-0.22]{90}{outer}{*}
}}\end{array}.
\] (The shading is determined by the stars being in unshaded regions,
the position of $*$ on the inside box being irrelevant.)
\end{claim}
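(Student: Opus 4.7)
Write $\Phi: P_{k-1} \to P_{k}$ for the annular tangle map displayed in the claim. Using Theorem \ref{thm:HRcommutsAsAlgArePks} we already know that $P_{k-1} = M_0' \cap M_{k-1}$ and $P_k = M_0' \cap M_k$, so the issue is to show that $\Phi$ lands in $M_1' \cap M_k$ and that it fills up this whole subalgebra. The plan is to establish the three statements: (a) $\mathrm{im}\,\Phi \subset M_1' \cap M_k$; (b) $\Phi$ is injective; (c) $\dim \mathrm{im}\,\Phi = \dim M_1' \cap M_k$.

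For (a), recall that in the diagrammatic description, the inclusion $M_1 \subset M_k$ is realized by taking a $1$-box $y$ and extending to a $k$-box by adjoining $2(k-1)$ through-strings on the right (this is exactly the diagrammatic form of the inclusion $i_k^1: \mathfrak M_1 \to \mathfrak M_k$). Thus an element $y \in M_1$, viewed inside $M_k$, has a through-string on its far-left pair of marked points. To check $[\Phi(A),y] = 0$ for all such $y$, one computes $\Phi(A)\cdot y$ and $y\cdot\Phi(A)$ as planar tangles, and observes that the through-string coming from $y$ slides freely past the leftmost cap of $\Phi(A)$ by isotopy; after the slide, the two diagrams coincide. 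Hence $\Phi(A) \in M_1' \cap M_k$.

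For (b), the inverse annular tangle $\Psi: P_k \to P_{k-1}$ given by capping off the two leftmost strings of $\Phi(A)$ satisfies $\Psi \circ \Phi(A) = \delta A$ (a closed circle is created in the composition, contributing one factor of $\delta$); since $\delta > 1$ in the subfactor case, $\Phi$ is injective.

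For (c), here is where the main obstacle lies: one must prove that $\dim(M_1' \cap M_k) \leq \dim P_{k-1}$ so that (b) forces $\Phi$ to be onto. By the previous lemma, the tower $M_0 \subset M_1 \subset M_2 \subset \cdots$ is precisely the Jones tower of basic constructions and the trace is $\lambda$-Markov; a standard argument for Markov towers (cf.\ Pimsner--Popa \cite{pimsner-popa:entropyIndex}, or equivalently the commuting-square identity for the basic construction) yields the canonical anti-isomorphism
\[
M_1' \cap M_k \;\xrightarrow{\;x \mapsto \mathbf{e}_1 x = x\mathbf{e}_1\;} \; \mathbf{e}_1\, (M_0' \cap M_{k-1})\, \mathbf{e}_1,
\]
which has the same dimension as $M_0' \cap M_{k-1} = P_{k-1}$. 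An alternative, purely diagrammatic route is to define a second annular tangle $\Psi'$ realizing this compression-by-$\mathbf{e}_1$ map and to check that $\Psi' \circ \Phi$ is a non-zero scalar multiple of the identity on $P_{k-1}$; this is the step most likely to require care, since one has to verify that the compression respects the identifications $P_j = M_0' \cap M_j$. Once dimensions are matched, combining (a) and (b) gives $\mathrm{im}\,\Phi = M_1' \cap M_k$, completing the proof.
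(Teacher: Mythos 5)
Your proof plan follows the same three-step strategy as the paper's (extremely terse) proof: show the image lies in $M_1'\cap M_k$ by a diagrammatic commutation check, show injectivity via the inverse annular tangle (picking up a closed-loop factor $\delta$), and then invoke the equality $\dim(M_1'\cap M_k)=\dim(M_0'\cap M_{k-1})$ from general subfactor theory. Steps (a) and (b) are carried out correctly.

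However, the explicit formula you write down in step (c) is not correct as stated. You claim the canonical anti-isomorphism is $x\mapsto \mathbf e_1 x=x\mathbf e_1$ for $x\in M_1'\cap M_k$, but $\mathbf e_1$ is the Jones projection implementing $E_{M_0}$ and lives in $M_2\setminus M_1$; an element $x$ commuting with $M_1$ has no reason to commute with $\mathbf e_1$, so the displayed equality $\mathbf e_1 x=x\mathbf e_1$ fails in general (it would force index $1$). The correct standard statement from Pimsner--Popa is different: one compresses by $\mathbf e_1$ and uses $\mathbf e_1 M_k\mathbf e_1=M_{k-2}\mathbf e_1$, or one uses the downward basic construction / duality of the standard invariant, to identify $M_1'\cap M_k$ with $M_0'\cap M_{k-1}$ up to anti-isomorphism. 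You are citing the right source and arriving at the right dimension count, but the specific map you write is not the one that implements the isomorphism; this should be corrected or simply replaced by the bare assertion (as the paper does) that the dimension equality follows from general subfactor theory. Your suggested alternative purely diagrammatic route is sound in outline and is a reasonable hedge against this imprecision.
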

\begin{proof}
[Proof of claim.] It is a simple diagrammatic calculation to show
that the image of this tangle does indeed commute with $M_{1}$. On
the other hand the tangle defines an injective map (the inverse tangle
is obvious) and from general subfactor theory the dimensions of $M_{0}'\cap M_{k-1}$
and $M_{1}'\cap M_{k}$ are the same. 
\end{proof}
\begin{claim}
If $A$ is in $M_{0}'\cap M_{k}$, identified with $P_{k}$, then\[
E_{M_{1}'}(A)=\delta^{-1}\begin{array}{c}\rnode{outer}{\psframebox[framearc=0.4]{\vbox to3em{\vfill}
\qquad\qquad\ovalnode{x}{\quad A\quad }\qquad\qquad
\ncdiag[arm=0,angleA=20,angleB=30]{x}{outer}
\ncdiag[arm=0,angleA=45,angleB=60]{x}{outer}
\ncdiag[arm=0,angleA=135,angleB=120]{x}{outer}
\ncdiag[arm=0,angleA=160,angleB=150]{x}{outer}
\nput[labelsep=-0.25]{135}{outer}{\cdots}
\nput[labelsep=-0.25]{45}{outer}{\cdots}
\ncbar[arm=-0.3,linearc=0.2,angle=90,offsetA=0.2,offsetB=0.2]{outer}{outer}
\nput[labelsep=-0.22]{90}{outer}{*}
\ncbar[arm=0.3,linearc=0.2,angle=90,offsetA=0.2,offsetB=0.2]{x}{x}
\nput[labelsep=0]{90}{x}{*}
}}\end{array}.
\]
\end{claim}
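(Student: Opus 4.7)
The plan is to show that the tangle on the right-hand side, which I denote $\Phi(A)$, agrees with the trace-preserving conditional expectation $E_{M_1'\cap M_k}\colon M_0'\cap M_k\to M_1'\cap M_k$. By uniqueness of the trace-preserving conditional expectation onto a von Neumann subalgebra, it suffices to verify four properties of $\Phi$: (i) $\Phi(M_0'\cap M_k)\subset M_1'\cap M_k$; (ii) $\Phi|_{M_1'\cap M_k}=\mathrm{id}$; (iii) $\Phi$ is $M_1'\cap M_k$-bilinear; (iv) $\Phi$ is trace-preserving.

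Property (i) follows immediately from the preceding claim: the tangle for $\Phi(A)$ factors as a cap tangle $\kappa\colon P_k\to P_{k-1}$ (joining the two strands adjacent to $*$) followed by the annular tangle $T\colon P_{k-1}\to M_1'\cap M_k$ of the preceding claim. Thus $\Phi(A)=\delta^{-1}T(\kappa(A))\in M_1'\cap M_k$.

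For (ii), any $B\in M_1'\cap M_k$ has the form $B=T(C)$ for some $C\in P_{k-1}$. The cap built into $T$ joins precisely the two strands adjacent to $*$, on which $\kappa$ also acts; these two caps together bound a disc that contracts to a single closed loop of value $\delta$, while the residual diagram is $C$. Hence $\kappa(T(C))=\delta\,C$ and $\Phi(B)=\delta^{-1}T(\delta\,C)=T(C)=B$.

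For (iii) and (iv): bilinearity over $M_1'\cap M_k$ reduces, using that $T$ is an algebra homomorphism and the identity $\kappa(T(C)\cdot A\cdot T(C'))=C\cdot\kappa(A)\cdot C'$ (a planar-isotopy check valid because $C$ and $C'$ act only on the $2(k-1)$ strands not touched by $\kappa$), to the equality $\Phi(T(C)\,A\,T(C'))=T(C)\,\Phi(A)\,T(C')$. For trace-preservation, close $\Phi(A)=\delta^{-1}T(\kappa(A))$ with the trace tangle: the outermost trace arc pairs the same two boundary points as $\Phi$'s outer cap, producing a closed loop of value $\delta$ that cancels the factor $\delta^{-1}$; the remaining trace arcs, together with the through-strings of $T$ and the $\kappa$-cap, combine by planar isotopy to yield precisely the trace-closure of $A$. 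The main obstacle is this diagrammatic bookkeeping in (iv), which demands careful attention to which boundary points are paired by the trace arcs after composition with $\Phi$; once the shading and $*$-placement conventions are fixed (the region adjacent to $*$ is unshaded, both the outer cap of $T$ and the cap of $\kappa$ join the pair of strands immediately next to $*$), the verification reduces to standard planar isotopy and removal of closed loops of value $\delta$.
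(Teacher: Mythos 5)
Your proof is correct, but it takes a more elaborate route than the paper's. The paper's proof is two sentences: first it invokes extremality to identify $E_{M_1'}$ with the trace-preserving expectation $E_{M_1'\cap M_k}$ on $M_0'\cap M_k$, and then it verifies the defining identity $tr(\Phi(A)B)=tr(AB)$ for all $B\in M_1'\cap M_k$ by drawing the picture for $tr(AB)$ and isotoping --- a single diagrammatic check, aided by sphericality of the partition function. You instead characterize $E_{M_1'\cap M_k}$ as the unique $M_1'\cap M_k$-bilinear trace-preserving projection and verify the four properties (image, idempotence, bilinearity, trace-preservation) separately. Both characterizations are standard and equivalent, and your factorization $\Phi=\delta^{-1}\,T\circ\kappa$ through the preceding claim's annular map $T$ is a nice organizing device: it makes (i) and (ii) essentially free, given the injectivity/surjectivity of $T$ already established. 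The paper's route is tighter because it collapses (iii) and (iv) into a single trace identity where sphericality can be applied once; your items (iii) and (iv) each require their own diagrammatic bookkeeping, and in (iv) the final step (``the $\kappa$-cap combines by planar isotopy to yield the trace-closure of $A$'') implicitly uses sphericality rather than mere planar isotopy, since the $\kappa$-cap sits on the opposite side of the diagram from where the trace arcs close off; you should flag that. Finally, you silently replace $E_{M_1'}$ by $E_{M_1'\cap M_k}$ at the outset; this identification is exactly what the paper's ``by extremality'' remark is for, so it deserves a line.
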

\begin{proof}
[Proof of claim.] By extremality $E_{M_{1}'}=E_{M_{1}'\cap M_{k}}$
for elements of $M_{0}'\cap M_{k}$. Drawing the picture for $tr(AB)$
for $A\in M_{0}'\cap M_{k}$ and $B\in M_{1}'\cap M_{k}$, the result
is visible. 
\end{proof}
This concludes the proof of the Theorem.
\end{proof}

\bibliographystyle{alpha}
%\bibliography{database}

\end{document}